\newcommand{\varphibar}{{\overline{\varphi}}}
\newcommand{\phibar}{{\overline{\phi}}}
\newcommand{\Phibar}{{\overline{\Phi}}}
\newcommand{\pair}[1]{{\langle {#1} \rangle}}
\newcommand{\floor}[1]{{\lfloor {#1} \rfloor}}
\newcommand{\reduced}{\mathrm{red}}
\theoremstyle{plain}
\newtheorem{theorem}{Theorem}[section]
\newtheorem{corollary}[theorem]{Corollary}
\newtheorem{lemma}[theorem]{Lemma}
\newtheorem{proposition}[theorem]{Proposition}
\theoremstyle{definition}
\newtheorem{remark}[theorem]{Remark}
\numberwithin{equation}{section}
\numberwithin{figure}{section}
\newcommand{\lb}{\left(}
\newcommand{\rb}{\right)}
\newcommand{\lc}{\left\{}
\newcommand{\rc}{\right\}}
\newdimen\argwidth
\def\db[#1\db]{
 \setbox0=\hbox{$#1$}\argwidth=\wd0
 \setbox0=\hbox{$\left[\box0\right]$}
  \advance\argwidth by -\wd0
 \left[\kern.3\argwidth\box0 \kern.3\argwidth\right]}
\newcommand{\I}{\ensuremath{\sqrt{-1}}}
\renewcommand{\Re}{\ensuremath{\mathfrak{Re}}}
\renewcommand{\Im}{\ensuremath{\mathop{\mathfrak{Im}}\nolimits}}
\newcommand{\Ker}{\operatorname{Ker}}
\newcommand{\Res}{\mathop{\mathrm{Res}}}
\newcommand{\id}{\ensuremath{\mathop{\mathrm{id}}}}
\newcommand{\Sym}{\operatorname{Sym}}
\newcommand{\SL}{\operatorname{SL}}
\newcommand{\coh}{\operatorname{coh}}
\newcommand{\Ext}{\mathop{\mathrm{Ext}}\nolimits}
\newcommand{\Hom}{\mathop{\mathrm{Hom}}\nolimits}
\newcommand{\Lotimes}{\stackrel{\bL}{\otimes}}
\newcommand{\Proj}{\operatorname{Proj}}
\newcommand{\lto}{\longrightarrow}
\newcommand{\bC}{\ensuremath{\mathbb{C}}}
\newcommand{\bL}{\ensuremath{\mathbb{L}}}
\newcommand{\bN}{\ensuremath{\mathbb{N}}}
\newcommand{\bP}{\ensuremath{\mathbb{P}}}
\newcommand{\bQ}{\ensuremath{\mathbb{Q}}}
\newcommand{\bT}{\ensuremath{\mathbb{T}}}
\newcommand{\bZ}{\ensuremath{\mathbb{Z}}}
\newcommand{\be}{\ensuremath{\boldsymbol{e}}}
\newcommand{\bd}{\ensuremath{\boldsymbol{d}}}
\newcommand{\bq}{\ensuremath{\boldsymbol{q}}}
\newcommand{\bv}{\ensuremath{\boldsymbol{v}}}
\newcommand{\scE}{\ensuremath{\mathcal{E}}}
\newcommand{\scF}{\ensuremath{\mathcal{F}}}
\newcommand{\scH}{\ensuremath{\mathcal{H}}}
\newcommand{\scK}{\ensuremath{\mathcal{K}}}
\newcommand{\scL}{\ensuremath{\mathcal{L}}}
\newcommand{\scO}{\ensuremath{\mathcal{O}}}
\newcommand{\bCx}{\bC^{\times}}
\newcommand{\h}{h}
\newcommand{\X}{\boldsymbol{X}}
\newcommand{\Y}{\boldsymbol{Y}}
\newcommand{\Xbar}{\overline{X}}
\newcommand{\scEtilde}{\scE}
\newcommand{\scFtilde}{\scF}
\newcommand{\Sigmatilde}{\widetilde{\Sigma}}
\newcommand{\scEbar}{\overline{\scE}}
\newcommand{\scFbar}{\overline{\scF}}
\newcommand{\Horb}{H_{\mathrm{orb}}}
\newcommand{\Itilde}{\widetilde{I}}
\newcommand{\Hqd}{H_{\bq, \bd}}
\newcommand{\Qred}{{Q^{\reduced}}}
\title{Invariants of hypergeometric groups for
Calabi-Yau complete intersections in weighted projective spaces}
\author{Susumu Tanab\'{e}
and
Kazushi Ueda%
}
\keywords{hypergeometric function; monodromy; mirror symmetry}
\subjclass[2010]{32S40, 33C20, 53D37}
\date{}
\begin{document}

\maketitle

\begin{abstract}
Let $Y$ be a Calabi-Yau complete intersection
in a weighted projective space.
We show that the space of quadratic invariants of
the hypergeometric group
associated with the twisted $I$-function
is one-dimensional,
and spanned by the Gram matrix of a split-generator
of the derived category of coherent sheaves on $Y$
with respect to the Euler form.
\end{abstract}

\section{Introduction} \label{sc:introduction}

Let $\bq = (q_0, \dots, q_N)$ and $\bd = (d_1, \dots, d_r)$
be sequences of positive integers
such that
$$
 Q := q_0 + \dots + q_N = d_1 + \dots + d_r,
$$
and consider a complete intersection $Y$
of degree $(d_1, \dots, d_r)$
in the weighted projective space
$\bP = \bP(q_0, \dots, q_N)$.
If $Y$ is smooth,
then it is a Calabi-Yau manifold of dimension
$
 n = N - r.
$
The derived category $D^b \coh \bP$ of coherent sheaves on $\bP$
has a full strong exceptional collection
$$
 (\scEtilde_i)_{i=1}^Q
   = (\scO_{\bP}, \scO_{\bP}(1), \dots, \scO_{\bP}(Q-1))
$$
of line bundles \cite{Beilinson, Auroux-Katzarkov-Orlov_WPP}.
Let $(\scFtilde_i)_{i=1}^Q$ be the full exceptional collection
dual to $(\scEtilde_i)_{i=1}^Q$,
so that
$$
 \chi(\scEtilde_{Q-i+1}, \scFtilde_j)
  = \delta_{ij}
$$
where
\begin{equation} \label{eq:Euler}
 \chi(\scE, \scF)
  = \sum_k (-1)^k \dim \Ext^k (\scE, \scF)
\end{equation}
is the Euler form.
The derived restrictions
$
\{ \scFbar_i \}_{i=1}^Q
$
of $\{ \scFtilde_i \}_{i=1}^Q$ to $Y$
split-generate the derived category $D^b \coh Y$
of coherent sheaves on $Y$
\cite[Lemma 5.4]{Seidel_K3}.

Following \cite[Equation (4)]{Coates-Corti-Lee-Tseng},
let us introduce the {\em twisted $I$-function}
\begin{equation} \label{eq:Gamma-series}
 I_{\bP, Y}(t) = \sum_\alpha
 e^{P_\alpha \log t} t^{\rho_\alpha}
 \sum_{n = 0}^\infty
  t^n
   \frac{
    \prod_{k = 1}^r
    \prod_{\substack{b \colon \pair{b} = \pair{\rho_\alpha d_k} \\
                          0 < b \le (n + \rho_\alpha) d_k}}
          (d_k P_\alpha + b)
    }
    {
    \prod_{\nu = 0}^N
    \prod_{\substack{b \colon \pair{b} = \pair{\rho_\alpha q_\nu} \\
                          0 < b \le (n + \rho_\alpha) q_\nu}}
          (q_\nu P_\alpha + b)
    },
\end{equation}
where $\pair{x} = x - \floor{x}$ is the fractional part of $x$.
This is an element of the ring
$
 \bigoplus_\alpha \bC[P_\alpha]_\alpha
  / (P_\alpha^{\mu_\alpha})_\alpha,
$
which is isomorphic to the orbifold cohomology of $\bP$
as a vector space.
See Section 2 for the definition of $\rho_\alpha \in \bQ$ and
$\mu_\alpha \in \bN$.
The twisted $I$-function $I_{\bP, Y}$ contains
the information of twisted Gromov-Witten invariant
of the bundle
$
 \scO_{\bP}(d_1) \oplus \cdots \oplus \scO_{\bP}(d_r)
  \to \bP
$
through the twisted $J$-function
\cite{Coates-Corti-Iritani-Tseng_CGZTGWI}.

The components of the twisted $I$-function
span the space of solutions
of the hypergeometric differential equation
\begin{align} \label{eq:hge}
 \left[
  \prod_{\nu=0}^N \prod_{a=0}^{q_\nu - 1}
   \left( q_\nu \theta_t - a \right)
  - t
     \prod_{k=1}^r \prod_{b=1}^{d_k}
       \left(
        d_k \theta_t + b
       \right)
 \right] I = 0
\end{align}
where $\theta_t = t \dfrac{\partial}{\partial t}$.
Let $\scH$ be the differential operator
on the left hand side and
$\scH^{\reduced}$ be the operator obtained from $\scH$
by removing common factors from the two summands of $\scH$.
Both $\scH$ and $\scH^{\reduced}$ have regular singularities
at $0$, $\infty$ and
$
 \lambda = \left. \prod_{\nu=0}^N q_\nu^{q_\nu}
      \right/ \prod_{k=1}^r d_k^{d_k}.
$
The local system $\scL^{\reduced}$ defined by $\scH^{\reduced}$
is irreducible, and its rank $Q^{\reduced}$ is smaller
than the rank $Q$ of the local system $\scL$
defined by $\scH$.
The irreducible system $\scL^{\reduced}$ supports
a pure and polarized variation of Hodge structures,
whose Hodge numbers are computed by Corti and Golyshev
\cite[Theorem 1.3]{Corti-Golyshev}.

The mirror of $Y$ is identified
by Batyrev and Borisov \cite{Batyrev-Borisov_CYCITV}
as the family of toric complete intersections
whose affine part is given by
\begin{equation} \label{eq:mirror}
 X_t = \{ (x_0, \cdots, x_N) \in (\bCx)^{N+1} \mid
  f_i (x) = 0, \ i = 0, \ldots, r \}
\end{equation}
where
\begin{align*}
 f_0 (x) &= x_0^{q_0} x_1^{q_1} \dots x_N^{q_N} - t, \\
 f_i(x) &= \sum_{k \in S_i} x_k - 1, \qquad \qquad
  i = 1, \ldots, r,
\end{align*}
the variable $t$ is the parameter of the family,
and
$
 S_1 \sqcup \dots \sqcup S_r
  = \{ 0, 1, \dots, N \}
$
is a partition of $\{ 0, 1, \dots, N \}$ into $r$ disjoint subsets
such that
$
 d_k = \sum_{i \in S_k} q_i.
$
The period integral
\begin{equation} \label{eq:period}
 I(t)
  = \int_\gamma
     \frac{x_0^{q_0} \dots x_N^{q_N}}
          {d f_0 \wedge \dots \wedge d f_r}
     \frac{d x_0}{x_0} \wedge \dots \wedge \frac{d x_n}{x_n}
\end{equation}
for a middle-dimensional cycle $\gamma \in H_{n}(X_t)$
satisfies the irreducible hypergeometric differential equation
$\scH^{\reduced} I = 0$.

Define the {\em hypergeometric group}
$\Hqd$
as the subgroup of $GL(Q, \bZ)$
generated by
\begin{equation} \label{eq:hinfty}
h_\infty
   = \begin{pmatrix}
      0 & 0 & \dots & 0 & - A_Q \\
      1 & 0 & \dots & 0 & - A_{Q-1} \\
      0 & 1 & \dots & 0 & - A_{Q-2} \\
      \vdots & \vdots & \ddots & \vdots & \vdots \\
      0 & 0 & \dots & 1 & - A_1
     \end{pmatrix}  
\end{equation}
and
\begin{equation} \label{eq:h0}
 h_0^{-1}
   = \begin{pmatrix}
      0 & 0 & \dots & 0 & - B_Q \\
      1 & 0 & \dots & 0 & - B_{Q-1} \\
      0 & 1 & \dots & 0 & - B_{Q-2} \\
      \vdots & \vdots & \ddots & \vdots & \vdots \\
      0 & 0 & \dots & 1 & - B_1
     \end{pmatrix},
\end{equation}
where
\begin{equation} \label{eq:a}
 \prod_{k=1}^r (T^{d_k} - 1)
  = T^{Q} + A_1 T^{Q-1} + A_2 T^{Q-2} + \dots + A_{Q}
\end{equation}
and
\begin{equation} \label{eq:b}
 \prod_{\nu=0}^N (T^{q_\nu} - 1)
  = T^{Q} + B_1 T^{Q-1} + B_2 T^{Q-2} + \dots + B_{Q}
\end{equation}
are the characteristic polynomials
of the monodromy of \eqref{eq:hge}
at infinity and zero respectively.
If the system
is irreducible,
then a result of Levelt \cite{Levelt} states
that the monodromy group is conjugate
to the hypergeometric group $\Hqd$.
Although the system $\scL$ is reducible and
one can not apply the result of Levelt directly,
we can show the following:

\begin{theorem} \label{th:monodromy-group}
For any sequences
$\bq = (q_0, \dots, q_N)$ and
$\bd = (d_1, \dots, d_r)$
of positive integers
satisfying
$$
 Q := q_0 + \dots + q_N = d_1 + \dots + d_r,
$$
the monodromy group of \eqref{eq:hge} is conjugate
to the hypergeometric group
$\Hqd$
\end{theorem}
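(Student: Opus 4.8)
The plan is to compute the three local monodromy operators $M_0$, $M_\lambda$, $M_\infty$ of \eqref{eq:hge} at $0$, $\lambda$, $\infty$ (normalised so that $M_0 M_\lambda M_\infty = \id$) and then to assemble them, reducing the reducible system $\scL$ to the irreducible system $\scL^{\reduced}$, on which Levelt's theorem does apply. Since $\Hqd$ is generated by the companion matrices $h_0$, $h_\infty$ of $B(T) = \prod_\nu(T^{q_\nu}-1)$ and $A(T) = \prod_k(T^{d_k}-1)$, and a non-derogatory matrix (one whose minimal polynomial equals its characteristic polynomial) is conjugate to the companion matrix of its characteristic polynomial, it suffices to produce a single basis in which $M_\infty = h_\infty$ and $M_0 = h_0$. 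The monodromy at $0$ is read off the twisted $I$-function \eqref{eq:Gamma-series}: its components have the form $e^{P_\alpha \log t}\, t^{\rho_\alpha}\cdot(\text{power series in } t)$, so continuation about $t=0$ acts on $\bigoplus_\alpha \bC[P_\alpha]/(P_\alpha^{\mu_\alpha})$ as $\bigoplus_\alpha e^{2\pi\I\rho_\alpha}\exp(2\pi\I P_\alpha)$, i.e.\ as one Jordan block of size $\mu_\alpha$ with eigenvalue $e^{2\pi\I\rho_\alpha}$ for each sector $\alpha$; by the description of $\Horb(\bP)$ recalled in Section~2 the eigenvalues $e^{2\pi\I\rho_\alpha}$ are pairwise distinct and $\prod_\alpha(T - e^{2\pi\I\rho_\alpha})^{\mu_\alpha} = \prod_\nu(T^{q_\nu}-1) = B(T)$, so $M_0$ is non-derogatory with characteristic polynomial $B(T)$, hence conjugate to $h_0$. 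Likewise $t \mapsto 1/t$, $\theta_t \mapsto -\theta_t$ turns \eqref{eq:hge} into a hypergeometric equation in $1/t$ whose local exponents at $0$ reduce mod $\bZ$ to $\{\,b/d_k\,\}$; applying the same analysis (equivalently, the twisted $I$-function of the data with $\bq$, $\bd$ interchanged, up to an exponent-shifting gauge transformation that does not change the local monodromy) shows that $M_\infty$ is non-derogatory with characteristic polynomial $A(T)$, hence conjugate to $h_\infty$.

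At $t = \lambda$ the leading coefficient of $\scH$ has a simple zero, so the local exponents there are $0, 1, \dots, Q-2$ together with one further exponent and there is no logarithmic solution; thus $M_\lambda$ is a pseudo-reflection, $\rank(M_\lambda - \id) = 1$. To put the triple in simultaneous normal form (which for an irreducible system would be Levelt's theorem), note that $g := \gcd(A,B)$ is nontrivial and that $\scL^{\reduced}$ is irreducible, so by Levelt its monodromy group is conjugate to the hypergeometric group of $A/g$ and $B/g$, while $\scL$ is a non-split extension of $\scL^{\reduced}$ by a local system $\scK$ of rank $Q - \Qred$ that is unramified at $\lambda$ and has Jordan-block monodromy at $0$ and $\infty$ with eigenvalues the roots of $g$; the shape of $M_0$ and $M_\infty$ found above forces the Jordan blocks of $\scK$ and of $\scL^{\reduced}$ with a common eigenvalue to fuse into a single block of $\scL$, which is precisely what allows a Levelt cyclic vector to be lifted. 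Concretely, choose $f_1$ spanning $\bigcap_{j=0}^{Q-2} M_\infty^{-j}\ker(M_\lambda - \id)$ and set $f_j := M_\infty^{\,j-1}f_1$ for $1 \le j \le Q$; then $f_1, \dots, f_{Q-1} \in \ker(M_\lambda - \id) = \ker(M_0^{-1} - M_\infty)$, so $M_0^{-1}f_j = M_\infty f_j = f_{j+1}$ for $j < Q$, and $A(M_\infty) = 0 = B(M_0^{-1})$ (minimal $=$ characteristic polynomials, by non-derogatoriness) give $M_\infty f_Q = -\sum_i A_i f_{Q+1-i}$ and $M_0^{-1}f_Q = -\sum_i B_i f_{Q+1-i}$. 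Provided $f_1, \dots, f_Q$ is a basis, $M_\infty$ and $M_0^{-1}$ become the matrices $h_\infty$ and $h_0^{-1}$ of \eqref{eq:hinfty} and \eqref{eq:h0}, so the monodromy group is conjugate to $\Hqd$.

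The main obstacle is this last assembly step. Because $\scL$ is reducible, Levelt's rigidity is unavailable, and one must verify that the extension of $\scL^{\reduced}$ by $\scK$ cut out by \eqref{eq:hge} is the unique one compatible with the prescribed local monodromies — equivalently, that $\bigcap_{j=0}^{Q-2} M_\infty^{-j}\ker(M_\lambda - \id)$ is exactly one-dimensional and its generator is $M_\infty$-cyclic. For a general reducible triple with the same local data these properties can fail, and there are then several non-conjugate representations; hence the precise non-semisimple shape of $M_0$ and $M_\infty$ coming from the twisted $I$-function — not merely their conjugacy classes — must be used. A smaller technical point is that the exponent-shifting transformation invoked for $M_\infty$ in the first paragraph must be checked to induce an isomorphism of local systems, which again relies on the rigidity of the irreducible reduced factor.
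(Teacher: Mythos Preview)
Your overall strategy coincides with the paper's: reduce to finding a Levelt vector, i.e.\ a vector $v$ with $h_0^{i}v=h_\infty^{-i}v$ for $i=1,\dots,Q-1$ that is cyclic for $h_0$ (equivalently for $h_\infty^{-1}$). You also correctly locate the difficulty: because $\scL$ is reducible, the mere knowledge of the conjugacy classes of $M_0$, $M_\lambda$, $M_\infty$ does not force such a $v$ to be cyclic, and you leave this step open. That is precisely the content of the paper's Proposition~2.3, and it is not a formality; the paper spends most of Section~2 on it. The missing idea is an \emph{analytic} input, not an algebraic one: for each local exponent $\rho_\alpha$ at $0$, the holomorphic (non-logarithmic) solution $X_{\sigma_\alpha}(z)=z^{\rho_\alpha}\cdot(\text{power series})$ at the top of its Jordan block is genuinely singular at $z=1$ (Lemma~2.6). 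The proof is a Liouville argument: if it were regular at $1$ it would extend to an entire function of polynomial growth, hence a polynomial, contradicting the fact that the $\Gamma$-series has infinitely many nonzero coefficients. This singularity forces the corresponding entries $c_{\sigma_\alpha}$ of the connection matrix between the Frobenius basis at $0$ and a basis at $1$ to be nonzero (Lemma~2.8). A direct linear-algebra computation (Lemmas~2.10 and~2.12) then shows that \emph{any} nonzero vector satisfying the Levelt relations is automatically cyclic for $h_0$, the cyclicity criterion being $\prod_\alpha v_{\sigma_{\alpha-1}+1}\neq 0$. Your sketch contains no substitute for this step; the ``fusing of Jordan blocks'' you invoke is exactly what Lemma~2.6 establishes and cannot be deduced from non-derogatoriness alone.

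Two smaller points. First, your assertion that ``there is no logarithmic solution'' at $t=\lambda$ is false when $n=N-r$ is odd: then $(n-1)/2\in\{0,1,\dots,Q-2\}$ and $Y_Q$ does carry a $\log(z-1)$ term (see Lemma~2.7); $M_\lambda$ is still a pseudo-reflection, but unipotent rather than a true reflection. Second, with the normalisation $M_0M_\lambda M_\infty=\id$ one has $\ker(M_\lambda-\id)=\ker(M_0-M_\infty^{-1})$, not $\ker(M_0^{-1}-M_\infty)$; consequently the correct iteration is $f_{j+1}=M_0 f_j=M_\infty^{-1}f_j$, matching the paper's condition $h_0^{i}v=h_\infty^{-i}v$.
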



An element $h \in \Hqd$ acts naturally
on the space of $Q \times Q$-matrices by
$$
 \Hqd \ni h : 
  X \mapsto h \cdot X \cdot h^T,
$$
where $h^T$ is the transpose of $h$.
The following is a corollary of Theorem \ref{th:monodromy-group}:

\begin{theorem} \label{th:invariant}
The space of $Q \times Q$-matrices
invariant under the action of the monodromy group
$\Hqd$ of \eqref{eq:hge}
is one-dimensional and
spanned by the Gram matrix
$$
 \left( \chi(\scFbar_i, \scFbar_j) \right)_{i, j = 1}^Q
$$
of the split-generator $\{ \scFbar_i \}_{i=1}^Q$
with respect to the Euler form.
\end{theorem}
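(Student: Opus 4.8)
The plan is to deduce the statement from Theorem~\ref{th:monodromy-group} by a representation-theoretic analysis of the local system $\scL$ attached to \eqref{eq:hge}. Let $\rho$ denote the monodromy representation of \eqref{eq:hge} on $\bC^{Q}$; by Theorem~\ref{th:monodromy-group} we may assume its image is $\Hqd=\langle h_0,h_\infty\rangle$. A $Q\times Q$-matrix $X$ with $hXh^{T}=X$ for all $h\in\Hqd$ is exactly a $\pi_1(\bP^{1}\setminus\{0,\lambda,\infty\})$-invariant bilinear form on the dual local system $\scL^{\vee}$, i.e.\ an element of $\Hom_{\pi_1}(\scL^{\vee},\scL)$; and since $h_0,h_\infty$ generate $\Hqd$ it is enough to impose $h_0Xh_0^{T}=X$ and $h_\infty Xh_\infty^{T}=X$. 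Thus the theorem is equivalent to the conjunction of (a) the Gram matrix $G:=\bigl(\chi(\scFbar_i,\scFbar_j)\bigr)_{i,j}$ is a nonzero element of $\Hom_{\pi_1}(\scL^{\vee},\scL)$, and (b) $\dim_{\bC}\Hom_{\pi_1}(\scL^{\vee},\scL)\le 1$. I would establish (a) and (b) separately.

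\emph{Lower bound (a).} Using $\scEtilde_i=\scO_{\bP}(i-1)$, the duality $\chi(\scEtilde_{Q-i+1},\scFtilde_j)=\delta_{ij}$, and the Koszul relation $[\scO_Y]=\prod_{k}\bigl(1-[\scO_{\bP}(-d_k)]\bigr)$ in $K_0(\bP)$, one expresses $\chi(\scFbar_i,\scFbar_j)$ through the coefficients of \eqref{eq:a} and \eqref{eq:b}. Since $h_\infty$ and $h_0^{-1}$ are the companion matrices of \eqref{eq:a} and \eqref{eq:b}, the equalities $h_0Gh_0^{T}=G$ and $h_\infty Gh_\infty^{T}=G$ reduce to the polynomial identities \eqref{eq:a}, \eqref{eq:b} and follow by a direct computation; equivalently, $h_0$ and $h_\infty$ act on $\bigoplus_i\bZ[\scFbar_i]\subset K_0(Y)$ through autoequivalences of $D^b\coh Y$ ($-\otimes\scO_Y(1)$ and the resulting Coxeter transformation), under which the Euler form is invariant. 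As $G\neq 0$, this gives $\dim\Hom_{\pi_1}(\scL^{\vee},\scL)\ge 1$.

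\emph{Upper bound (b).} Here the structure of $\scL$ is decisive. The operator $\scH^{\reduced}$ is obtained from $\scH$ by cancelling the common rational local exponents at $0$ and $\infty$, equivalently the common roots of \eqref{eq:a} and \eqref{eq:b}; correspondingly $\scL$ is a (generally non-split) iterated extension of the irreducible system $\scL^{\reduced}$ by rank-one Kummer local systems whose monodromy at $\lambda$ is trivial. Write $\scM$ for the iterated extension of these rank-one pieces, so that $\scL^{\reduced}$ and $\scM$ are the subquotients of a short exact sequence. Two facts matter: (i) $\scL^{\reduced}$ is irreducible and self-dual, since it underlies a polarized variation of Hodge structures \cite[Theorem~1.3]{Corti-Golyshev}, so $\Hom_{\pi_1}(\scL^{\reduced},(\scL^{\reduced})^{\vee})$ is one-dimensional by Schur's lemma; and (ii) $h_0$ and $h_\infty$, being companion matrices, are non-derogatory, so every eigenvalue occupies a single Jordan block and $\scL$ is as indecomposable as its Jordan--Hölder constituents permit. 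Because $\scL^{\reduced}$ is the unique Jordan--Hölder constituent of rank $>1$, occurs with multiplicity one, is self-dual, and admits no nonzero morphism to or from an iterated extension of rank-one local systems, any $\phi\in\Hom_{\pi_1}(\scL^{\vee},\scL)$ agrees on the $\scL^{\reduced}$-subquotient with a scalar multiple of the canonical self-pairing; subtracting the corresponding multiple of $G$, one reduces to the case that $\phi$ factors through the ``pure-Kummer'' subquotients of $\scL^{\vee}$ and $\scL$. The remaining, and decisive, point is that such a $\phi$ must vanish; granting this, $\dim\Hom_{\pi_1}(\scL^{\vee},\scL)\le 1$ and the theorem follows.

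\emph{The main obstacle.} This last vanishing cannot be read off from Jordan--Hölder data alone: at the level of the semisimplification $\scM$ by itself has endomorphism algebra of dimension equal to the multiplicity of the common cyclotomic factors of \eqref{eq:a} and \eqref{eq:b}, which is typically at least $r>1$, so a naive dévissage only yields $\dim\Hom_{\pi_1}(\scL^{\vee},\scL)\le 1+\dim\End(\scM)$. What rescues the bound is the \emph{non-split} way $\scM$ is glued to $\scL^{\reduced}$, which is forced by the rigidity of the hypergeometric system $\scL$ — it is determined up to isomorphism by its local monodromies, by Levelt's theorem \cite{Levelt}, and in particular $\scL\cong\scL^{\vee}$, so that (b) may be restated as $\End_{\pi_1}(\scL)=\bC\cdot\mathrm{id}$. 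Concretely one must compute $\Ext^{1}_{\pi_1}(\scM,\scL^{\reduced})$, a cohomology computation for local systems on $\bP^{1}\setminus\{0,\lambda,\infty\}$, identify the class of the extension $\scL$, and verify that no nonzero morphism factors through $\scM$. I expect this bookkeeping — organized cyclotomic factor by cyclotomic factor, and keeping track both of the maximal-unipotent point (the Jordan block of $h_0$ at the eigenvalue $1$, of size $n+1$ inside $\scL^{\reduced}$ but $N+1$ inside $\scL$) and of the rank-one conifold monodromy at $\lambda$ (which acts by $(-1)^{n+1}$, as the Fuchs relation for \eqref{eq:hge} shows) — to be the bulk of the argument.
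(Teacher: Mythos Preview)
Your treatment of (a) is essentially the paper's own argument (Section~\ref{sc:CYci}): the invariance of the Gram matrix is obtained by exhibiting $h_0$ and $h_1=h_0^{-1}h_\infty^{-1}$ as the $K$-theoretic shadows of the autoequivalences $-\otimes\scO_Y(-1)$ and the dual spherical twist $T_{\scFbar_1}^{\vee}$ of $D^b\coh Y$. So that half is fine.

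The gap is in (b). You correctly diagnose that a Jordan--H\"older d\'evissage only yields $\dim\le 1+\dim\End(\scM)$, and you propose to close the gap by computing $\Ext^1_{\pi_1}(\scM,\scL^{\reduced})$ and tracking the extension class of $\scL$ cyclotomic factor by cyclotomic factor. This is not carried out, and it is not clear it can be without essentially reproving the content of Theorem~\ref{th:monodromy-group} in finer form. Two specific worries: your appeal to ``Levelt's theorem'' for the self-duality $\scL\cong\scL^{\vee}$ is illegitimate as stated, since Levelt's rigidity is for irreducible systems and $\scL$ is not; you would need to invoke Theorem~\ref{th:monodromy-group} for \emph{both} $\scL$ and $\scL^{\vee}$ and check that the transpose-inverse of $\Hqd$ is again conjugate to $\Hqd$. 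And even granting $\scL\cong\scL^{\vee}$, the reformulation $\End_{\pi_1}(\scL)=\bC\cdot\id$ is not obviously easier: for a non-split iterated extension the endomorphism ring can be larger than $\bC$ (think of a single Jordan block), so one still has to feed in the specific extension data.

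The paper bypasses all of this. Its proof of the upper bound (Proposition~\ref{prop:ihg}) is a short direct matrix computation using only the explicit shape of $h_1$ in \eqref{eq:h1}: since $e_1$ is cyclic for $h_\infty^{T}$, an invariant $X$ is determined by its first column $(X_{i1})_i$; writing out the first column of $h_1Xh_1^{T}=X$ gives, for $n$ even, $X_{i1}=-\tfrac12(h_1)_{i1}X_{11}$, and for $n$ odd, $X_{11}=0$ together with a single linear relation pinning $X_{i1}$ to one free parameter. No local-system cohomology, no $\Ext$ groups, no appeal to the Hodge-theoretic self-duality of $\scL^{\reduced}$. Your representation-theoretic framework is conceptually natural, but here it trades a two-line linear-algebra argument for an unfinished cohomology computation.
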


Theorem \ref{th:invariant} is closely related
to the works of Horja \cite[Theorem 4.9]{Horja_HFMSTV}
and Golyshev \cite[\S 3.5]{Golyshev_RRV},
which goes back to Kontsevich
\cite{Kontsevich_ENS98}.
The main difference from their works is that
we work with the reducible system $\scL$
which contains sections
not coming from period integrals on the mirror manifold.
In the case of the irreducible local system $\scL^\reduced$,
Golyshev gave a beautiful interpretation
in terms of autoequivalences of the derived category
of the mirror manifold $Y$
(cf. Section \ref{sc:stokes}).
Although the geometric meaning
of extra sections of $\scL$
is unclear,
our proof of Theorem \ref{th:invariant} also use
autoequivalences of the derived category
of the mirror manifold $Y$,
just as in the case of the irreducible system.

The organization of this paper is as follows:
The proof of Theorem \ref{th:monodromy-group} is given
in Section \ref{sc:mhg}.
The essential step is to show the existence
of a cyclic vector for the monodromy around the origin,
which satisfies additional condition
with respect to the monodromy at infinity.
The uniqueness of the invariant of the hypergeometric group
is shown in Section \ref{sc:ihg},
and the invariance of the Gram matrix
of the split-generator
with respect to the Euler form is shown in Section \ref{sc:CYci}.
In Section \ref{sc:stokes},
we discuss the relationship
between the Gram matrix in Theorem \ref{th:invariant}
and the Stokes matrix
for the quantum cohomology of the weighted projective space.

{\bf Acknowledgment:}
We thank Hiroshi Iritani for valuable discussions,
and the anonymous referee for suggesting improvements.
This work is supported by Grant-in-Aid for Scientific Research
(No.~20540086 and No.~20740037).

\section{Monodromy of  hypergeometric equation} \label{sc:mhg}

We prove Theorem \ref{th:monodromy-group} in this section.
Let $h_0$, $h_1$ and $h_{\infty}$
be the global monodromy matrix
of the hypergeometric differential equation \eqref{eq:hge}
around the origin, one and infinity
with respect to some basis of solutions
satisfying
$
 h_0 \cdot h_1 \cdot h_\infty = 1.
$
Recall that a vector $v \in \bC^Q$ is said to be {\em cyclic}
with respect to $h \in GL(Q, \bC)$ if the set
$\{ h^i \cdot v \}_{i=0}^{Q-1}$ spans $\bC^Q$.
The following lemma is used by Levelt \cite{Levelt}
to compute the monodromy of hypergeometric functions
(see also Beukers and Heckman
\cite[Theorem 3.5]{Beukers-Heckman}).
\begin{lemma}
Assume that there exists a vector satisfying
\begin{align} \label{eq:condition}
 h_0^i v = h_\infty^{-i} v, \qquad i = 1, \dots, Q-1,
\end{align}
which is cyclic with respect to $h_0$.
Then the monodromy group of \eqref{eq:hge} is isomorphic to
$\Hqd$.
\end{lemma}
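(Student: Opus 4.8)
The plan is to convert the hypothesis into a statement of pure linear algebra, following Levelt: produce one basis of $\bC^Q$ in which $h_0^{-1}$ and $h_\infty$ become literally the companion matrices \eqref{eq:h0} and \eqref{eq:hinfty} that generate $\Hqd$.

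First I would record the two standard facts about \eqref{eq:hge}: the characteristic polynomial of the monodromy $h_0$ at the origin is $\prod_{\nu=0}^{N}(T^{q_\nu}-1)=T^Q+B_1T^{Q-1}+\dots+B_Q$, and that of $h_\infty$ at infinity is $\prod_{k=1}^{r}(T^{d_k}-1)=T^Q+A_1T^{Q-1}+\dots+A_Q$, read off from the indicial equations of $\scH$ at $0$ and $\infty$ (these are exactly \eqref{eq:b} and \eqref{eq:a}). Two elementary consequences are needed. First, the root set of each of these polynomials is a union of complete sets of roots of unity, hence stable under $z\mapsto z^{-1}$, so $h_0^{-1}$ has the same characteristic polynomial as $h_0$ and $h_\infty^{-1}$ the same as $h_\infty$. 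Second, $h_0$ is invertible (its characteristic polynomial has nonzero constant term), so $h_0^{-1}$ lies in $\bC[h_0]$; consequently a vector is cyclic for $h_0$ if and only if it is cyclic for $h_0^{-1}$, and in particular the given $v$ is cyclic for $h_0^{-1}$.

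Next I would build the basis. Set $v_j:=h_0^{-(j-1)}v$ for $j=1,\dots,Q$; by cyclicity of $v$ for $h_0^{-1}$ these form a basis of $\bC^Q$. By construction $h_0^{-1}v_j=v_{j+1}$ for $j<Q$, while Cayley--Hamilton applied to $h_0^{-1}$ gives $h_0^{-1}v_Q=h_0^{-Q}v=-(B_1v_Q+B_2v_{Q-1}+\dots+B_Qv_1)$, so the matrix of $h_0^{-1}$ in this basis is exactly \eqref{eq:h0}. Now I invoke \eqref{eq:condition} in the equivalent form $h_\infty^{i}v=h_0^{-i}v$ for $i=0,1,\dots,Q-1$, which says precisely that $v_j=h_\infty^{j-1}v$; therefore $h_\infty v_j=v_{j+1}$ for $j<Q$, and Cayley--Hamilton applied to $h_\infty$ gives $h_\infty v_Q=h_\infty^{Q}v=-(A_1v_Q+A_2v_{Q-1}+\dots+A_Qv_1)$, so the matrix of $h_\infty$ in this basis is exactly \eqref{eq:hinfty}. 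Finally, from $h_0h_1h_\infty=1$ we get $h_1=h_0^{-1}h_\infty^{-1}$, so the monodromy group equals $\langle h_0,h_1,h_\infty\rangle=\langle h_0^{-1},h_\infty\rangle$, which in the chosen basis is generated by \eqref{eq:h0} and \eqref{eq:hinfty}; by the definition of $\Hqd$ this is $\Hqd$. Hence the monodromy group is conjugate in $GL(Q,\bC)$, and in particular isomorphic, to $\Hqd$.

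Once the hypothesis is granted the argument is routine; the only delicate point here is keeping the conventions straight (which of $h_0$, $h_0^{-1}$ is realized as a companion matrix, and how cyclic vectors interact with Cayley--Hamilton). The genuine difficulty of Theorem \ref{th:monodromy-group} lies not in this lemma but in the remaining step of its proof: actually exhibiting a vector $v$ satisfying \eqref{eq:condition} that is cyclic for $h_0$. That is exactly the input one cannot take off the shelf from Levelt \cite{Levelt}, because the local system attached to \eqref{eq:hge} is reducible.
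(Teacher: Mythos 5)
Your overall strategy is the same as the paper's (Levelt's companion-matrix argument: exhibit one basis in which the two local monodromies are simultaneously in companion form, with last columns forced by Cayley--Hamilton), but there is a genuine error in the middle step. You replace \eqref{eq:condition}, which reads $h_0^i v = h_\infty^{-i}v$, by "the equivalent form $h_\infty^i v = h_0^{-i}v$". These are not equivalent: writing $A=h_0$, $B=h_\infty^{-1}$, the hypothesis $A^iv=B^iv$ does not imply $A^{-i}v=B^{-i}v$ unless $A$ and $B$ have the same characteristic polynomial. Indeed, in the basis $u_i=A^iv=B^iv$ one computes
\begin{equation*}
 A^{-1}u_0 = -\tfrac{1}{B_Q}\bigl(B_{Q-1}u_0+\dots+B_1u_{Q-2}+u_{Q-1}\bigr),
 \qquad
 B^{-1}u_0 = -\tfrac{1}{A_Q}\bigl(A_{Q-1}u_0+\dots+A_1u_{Q-2}+u_{Q-1}\bigr),
\end{equation*}
which agree only if $\prod_\nu(T^{q_\nu}-1)=\prod_k(T^{d_k}-1)$; already for the quintic ($\bq=(1,1,1,1,1)$, $\bd=(5)$) they differ. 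Consequently your identification $v_j=h_\infty^{j-1}v$ for the basis $v_j=h_0^{-(j-1)}v$ is unjustified, and with it the computation of the matrix of $h_\infty$ in that basis.

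The repair is exactly what the paper does: work with the basis of \emph{positive} powers $u_i=h_0^iv$, $i=0,\dots,Q-1$, which by \eqref{eq:condition} equals $h_\infty^{-i}v$. Then $h_0$ and $h_\infty^{-1}$ both act by the shift $u_i\mapsto u_{i+1}$ for $i<Q-1$, and Cayley--Hamilton (applied to $h_0$, whose characteristic polynomial is the $B$-polynomial, and to $h_\infty^{-1}$, whose characteristic polynomial is the $A$-polynomial by your roots-of-unity observation) pins down the images of $u_{Q-1}$, so that $h_0$ and $h_\infty^{-1}$ are represented precisely by the matrices \eqref{eq:h0} and \eqref{eq:hinfty}. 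Since a group generated by a set equals the group generated by the inverses of that set, $\langle h_0, h_\infty^{-1}\rangle=\langle h_0^{-1},h_\infty\rangle$ is the full monodromy group and is conjugate to $\Hqd$; the rest of your argument (cyclicity transferring between $h_0$ and $h_0^{-1}$, the identification of the characteristic polynomials, and the closing remark that the real content of Theorem \ref{th:monodromy-group} is Proposition \ref{prop:main}) is correct.
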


\begin{proof}
The condition \eqref{eq:condition} shows that
the action of $h_0$ and $h_{\infty}^{-1}$
with respect to the basis
$\{ h_{\infty}^{-i} v \}_{i = 0}^{Q-1}$ of $\bC^Q$
is given by
$$
 \begin{pmatrix}
  0 & 0 & \dots & 0 & * \\
  1 & 0 & \dots & 0 & * \\
  0 & 1 & \dots & 0 & * \\
  \vdots & \vdots & \ddots & \vdots & \vdots \\
  0 & 0 & \dots & 1 & * \\
 \end{pmatrix}.
$$
The last line is determined by the characteristic equations
$$
 \det (T - h_0)
  = T^Q + A_1 T^{Q-1} + A_2 T^{Q-2} + \dots + A_Q
$$
and
$$
 \det (T - h_{\infty}^{-1})
  = T^Q + B_1 T^{Q-1} + B_2 T^{Q-2} + \dots + B_Q.
$$
\end{proof}

\begin{remark}
Even if there is no vector satisfying \eqref{eq:condition}
which is cyclic with respect to $h_0$,
one can consider the subspace generated from any vector
satisfying \eqref{eq:condition} by the action of $h_0$,
and the resulting matrix presentation of the monodromy action
with respect to $\{ h_{\infty}^{-i} v \}_{i = 0}^{Q-1}$
will be given by \eqref{eq:h0} and \eqref{eq:hinfty}.
Since $\{ h_{\infty}^{-i} v \}_{i = 0}^{Q-1}$ is not a basis
but only a generator in such a case,
this matrix presentation is not unique.
\end{remark}

Hence the proof of Theorem \ref{th:monodromy-group}
is reduced to the following:

\begin{proposition} \label{prop:main}
There exists a vector $v$
in the space of solutions of \eqref{eq:hge}
which is cyclic with respect to $h_0$
and satisfies \eqref{eq:condition}.
\end{proposition}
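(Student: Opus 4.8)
The plan is to argue throughout with the global monodromy matrices $h_0,h_1,h_\infty$ satisfying $h_0h_1h_\infty=1$, exploiting that $h_1$ is a pseudo-reflection at the finite singular point. Two local facts enter at the outset. After dividing $\scH$ by its leading coefficient it becomes a generalized hypergeometric operator $\prod_i(\theta_t-\alpha_i)-(t/\lambda)\prod_j(\theta_t-\beta_j)$ whose local exponents at $0$ (resp.\ at $\infty$) are the rationals $a/q_\nu$ with $0\le a<q_\nu$ (resp.\ $-b/d_k$ with $1\le b\le d_k$); at either point they lie in a half-open interval of length one, so no two differ by a nonzero integer, and Frobenius' construction attaches to each exponent a single Jordan block. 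Hence $h_0$ and $h_\infty$ are \emph{regular}: every eigenvalue has a single Jordan block. Moreover the monodromy of a generalized hypergeometric operator at its finite singular point is a pseudo-reflection, so I may write $h_1=1+u\,\ell^T$ with $u\in\bC^Q\setminus\{0\}$ and $\ell\in\bC^Q$.

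\emph{Step 1 — reformulating \eqref{eq:condition}.} From $h_0h_1h_\infty=1$ one gets $h_\infty^{-1}=h_0+w\,\ell^T$ with $w:=h_0u\ne 0$. Substituting this into \eqref{eq:condition} and inducting on $i$ — granting $h_0^jv=h_\infty^{-j}v$ for $j<i$, one has $h_\infty^{-i}v=h_\infty^{-1}h_0^{i-1}v=h_0^iv+w\,(\ell^Th_0^{i-1}v)$ — shows that $v$ satisfies \eqref{eq:condition} if and only if $\ell^Th_0^iv=0$ for $i=0,1,\dots,Q-2$. Thus the solution space $W$ of \eqref{eq:condition} is the annihilator of $W^\perp:=\operatorname{span}\{(h_0^T)^i\ell:0\le i\le Q-2\}$, and in particular $\dim W\ge 1$ without any hypothesis.

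\emph{Step 2 — reducing cyclicity to a non-degeneracy of $\ell$.} Since $h_0$ is regular, a vector is cyclic for $h_0$ precisely when it avoids the finitely many maximal proper $h_0$-invariant subspaces; these are hyperplanes $M_\lambda$ indexed by the eigenvalues $\lambda$ of $h_0$, with $M_\lambda$ cut out by the eigenvector $e_\lambda^{\ast}$ of $h_0^T$ for $\lambda$ (unique up to scalar). So $W$ contains a vector cyclic for $h_0$ iff $e_\lambda^{\ast}\notin W^\perp$ for every $\lambda$; since $W^\perp$ lies in the $h_0^T$-cyclic subspace generated by $\ell$, a short companion-matrix computation shows this is equivalent to $\ell$ being a cyclic vector for $h_0^T$, and — using regularity once more — to: $\ell^T$ does not vanish on the one-dimensional eigenspace $\Ker(h_0-\lambda)$ for any eigenvalue $\lambda$ of $h_0$. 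When this holds, $\dim W=1$ and its generator is the vector $v$ asked for in Proposition \ref{prop:main}. So everything comes down to Step 3.

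\emph{Step 3 — the non-vanishing (the main obstacle).} Suppose $h_0x=\lambda x$ and $\ell^Tx=0$. Then $h_1x=x$, hence $h_\infty x=h_1^{-1}h_0^{-1}x=\lambda^{-1}x$, so $\lambda$ is a common eigenvalue of $h_0$ and $h_\infty$ — a common root of \eqref{eq:a} and \eqref{eq:b}, i.e.\ one of the eigenvalues removed on passing to $\scH^{\reduced}$ — and $\Ker(h_0-\lambda)=\Ker(h_\infty-\lambda^{-1})$. Write $m_0(\lambda),m_\infty(\lambda)$ for the multiplicities of $\lambda$ in \eqref{eq:b} and \eqref{eq:a}. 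The irreducible subquotient $\scL^{\reduced}$ carries the $\lambda$-generalized eigenspace of $h_0$ in dimension $m_0(\lambda)-\min(m_0(\lambda),m_\infty(\lambda))$ and the $\lambda^{-1}$-generalized eigenspace of $h_\infty$ in dimension $m_\infty(\lambda)-\min(m_0(\lambda),m_\infty(\lambda))$; hence if $m_0(\lambda)\ne m_\infty(\lambda)$ then exactly one of the lines $\Ker(h_0-\lambda)$, $\Ker(h_\infty-\lambda^{-1})$ is forced into $\scL^{\reduced}$ and the other forced out, a contradiction (for $\lambda=1$ this always applies, as $m_0(1)=N+1>r=m_\infty(1)$). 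There remains the case $\lambda$ common with $m_0(\lambda)=m_\infty(\lambda)$: here $\bC x$ would be a rank-one sub-local system of $\scL$, transverse to $\scL^{\reduced}$, with trivial monodromy at $\lambda$, and I would exclude it by a direct computation with the connection matrix between the Frobenius bases of \eqref{eq:hge} at $0$ and at $\infty$, whose entries are — up to the triangular blocks dictated by the Jordan structure — products and quotients of values of the Gamma function at rational arguments, all finite and non-zero, which fixes the relative position of the eigenvectors of $h_0$ and of $h_\infty$ and rules out the coincidence. I expect this last step to be the genuine difficulty: Steps 1–2 are formal, and the content lies entirely in controlling the reflection hyperplane $\Ker\ell^T$ of $h_1$ against the eigenlines of $h_0$ attached to the \emph{extra} eigenvalues of the reducible system $\scL$ — precisely those to which Levelt's theorem, applicable only to $\scL^{\reduced}$, says nothing.
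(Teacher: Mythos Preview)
Your Steps 1 and 2 are correct and give a clean linear-algebraic reduction: the condition \eqref{eq:condition} cuts out the annihilator $W$ of the $h_0^T$-cyclic span of $\ell$, and $W$ contains an $h_0$-cyclic vector precisely when $\ell^T$ is nonzero on every eigenline $\Ker(h_0-e_\alpha)$. This matches what the paper does implicitly in Lemmas \ref{lm:A'}, \ref{lm:B} and the final lemma of Section \ref{sc:mhg}, though there everything is written in an explicit Frobenius basis rather than abstractly.

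The gap is exactly where you flag it. In Step 3 your argument via the inclusion $\scL^{\reduced}\subset\scL$ does handle the eigenvalues with $m_0(\lambda)\ne m_\infty(\lambda)$, but it leaves the case $m_0(\lambda)=m_\infty(\lambda)>0$ open, and such $\lambda$ do occur (e.g.\ $\bq=(2,2,2,1,1)$, $\bd=(4,2,2)$ at $\lambda=-1$). Your proposed fix, computing the $0$-to-$\infty$ connection matrix in terms of Gamma values, would in principle work but is considerably more intricate than what is actually needed.

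The paper bypasses the case split entirely with a short growth argument (Lemma \ref{lm:A}). The $h_0$-eigenvector for $e_\alpha$ is the explicit solution $X_{\sigma_\alpha}(z)=z^{\rho_\alpha}\cdot(\text{convergent power series in }z)$, the unique log-free solution in its Jordan block. If $X_{\sigma_\alpha}$ were holomorphic at $z=1$, then $z^{-\rho_\alpha}X_{\sigma_\alpha}$ would be single-valued and holomorphic on all of $\bC$, with at most polynomial growth at $\infty$ (regular singularity), hence a polynomial, contradicting the fact that the series defining it at $z=0$ has infinitely many nonzero terms. Since the fixed hyperplane $\Ker\ell^T$ of $h_1$ is exactly the $(Q-1)$-dimensional space of solutions holomorphic at $z=1$ (Lemma \ref{lm:around-1}), this gives $\ell^T X_{\sigma_\alpha}\ne 0$ for every $\alpha$ in one stroke, with no reference to $\scL^{\reduced}$ or to multiplicities.
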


The rest of this section is devoted to the proof of
Proposition \ref{prop:main}.
Note that the existence of such $v$ is automatic
if the hypergeometric differential equation is irreducible
(cf.~\cite[Theorem 3.5]{Beukers-Heckman}).

The hypergeometric differential equation \eqref{eq:hge}
has regular singularities
at $t = 0, \infty$ and $\lambda$
where
$
 \lambda = \left. \prod_{\nu=0}^N q_\nu^{q_\nu}
  \right/ \prod_{k=1}^r d_k^{d_k}.
$
To simplify notations,
we introduce another variable $z$ by
$
 t = \lambda z.
$
Then the local exponents are given by
\begin{align*}
 &\frac{b}{d_k},
  \qquad k = 1, \dots, r,
  \quad  b = 1, \dots, d_k &
  \qquad & & \text{at } z &= \infty, \\
 &\frac{a}{q_\nu},
  \qquad \nu = 1, \dots, N,
  \quad  a   = 0, \dots, q_\nu - 1 &
  \qquad & & \text{at } z &= 0, \text{ and} \\
 0, \ 1, \ 2, &\dots, Q-2, \ 
  \frac{n - 1}{2} &
  \qquad & & \text{at } z &= 1.
\end{align*}
Let
$$
 1 > \rho_1 > \rho_2 > \dots > \rho_p = 0
$$
be the characteristic exponents of \eqref{eq:hge} at $z = 0$
so that
$$
 \{ \rho_1, \cdots, \rho_p \}
  = \bigcup_{0 \le \nu \le N}
     \left\{ 0, \frac{1}{q_\nu}, \dots, \frac{q_\nu - 1}{q_\nu} \right\}.
$$
Let further
$$
 \mu_\alpha
  = \# \left\{ (q_\nu, a) \left|
            \rho_\alpha = \frac{a}{q_\nu}, \quad
            0 \le a \le q_\nu - 1, \quad
            0 \le \nu \le N \right. \right\}
$$
be the multiplicity of the exponent $\rho_\alpha$
and put
$$
 e_\alpha = \exp(2 \pi \I \rho_\alpha), \qquad 1 \le \alpha \le p.
$$ We remark that $\mu_p=N+1.$ For the quantity defined by 
$$ \nu_\alpha
  = \# \left\{ (d_k, b) \left|
            \rho_\alpha =  1-\frac{b}{d_k}, \quad
            1 \le b \le d_k - 1, \quad
            1 \le k \le r \right. \right\}, \qquad 1 \le \alpha \le p$$
the following relation holds
$$
 Q^{\reduced}= \sum_{\alpha=1}^p (\mu_\alpha- \nu_\alpha).
$$ 

Let us introduce the matrices
$$
 M_0 =
  \begin{pmatrix}
   \rho_1 \id_{\mu_1} + J_{\mu_1, -} & 0 & \cdots & 0 \\
   0 & \rho_2 \id_{\mu_2} + J_{\mu_2, -} & \cdots & 0 \\
   \vdots & \vdots & \ddots & \vdots  \\
   0 & 0 & \cdots & \rho_p \id_{\mu_p} + J_{\mu_p, -}
  \end{pmatrix}
$$
$$
 E_0 =
  \begin{pmatrix}
   e_1 \id_{\mu_1} + J_{\mu_1, -} & 0 & \cdots & 0 \\
   0 & e_2 \id_{\mu_2} + J_{\mu_2, -} & \cdots & 0 \\
   \vdots & \vdots & \ddots & \vdots  \\
   0 & 0 & \cdots & e_p \id_{\mu_p} + J_{\mu_p, -}
  \end{pmatrix}
$$
where $J_{i, \pm}$ are $i \times i$ matrices defined by
$$
 J_{i, +} =
 \begin{pmatrix}
   0 & 1 & 0 & \cdots & 0 \\
   0 & 0 & 1 & \cdots & 0 \\
   \vdots & \vdots & \vdots & \ddots & \vdots \\
   0 & 0 & 0 & \cdots & 1 \\
   0 & 0 & 0 & \cdots & 0
 \end{pmatrix}
$$
and
$$
 J_{i, -} =
 \begin{pmatrix}
  0 & 0 & \cdots & 0 & 0 \\
  1 & 0 & \cdots & 0 & 0 \\
  0 & 1 & \cdots & 0 & 0 \\
  \vdots & \vdots & \ddots & \vdots & \vdots \\
  0 & 0 & \cdots & 1 & 0
 \end{pmatrix}.
$$

A series solution to \eqref{eq:hge} at the origin can be obtained
by the Frobenius method:

\begin{lemma} \label{lm:Gamma-series}
A basis of solutions to \eqref{eq:hge} can be obtained
as the coefficient of $P_\alpha^i$
for $\alpha = 1, \ldots, p$ and $i = 0, \dots, \mu_\alpha - 1$
in the $\Gamma$-series
in \eqref{eq:Gamma-series}.
Solutions to the irreducible equation
$H^{red} u=0$ correspond to the coefficient of  $P_\alpha^i$
for $i = 0, \dots, \mu_\alpha - \nu_\alpha-1$
in  \eqref{eq:Gamma-series}. 
\end{lemma}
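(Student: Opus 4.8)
The plan is to produce all local solutions of \eqref{eq:hge} at the origin by the Frobenius method, and then to cut out those that solve the reduced equation by an argument with the filtration by powers of $\log t$. Write $\scH = \scH_1(\theta_t) - t\,\scH_2(\theta_t)$ with two summands
$$
 \scH_1(\theta) = \prod_{\nu=0}^N\prod_{a=0}^{q_\nu-1}(q_\nu\theta - a),
 \qquad
 \scH_2(\theta) = \prod_{k=1}^r\prod_{b=1}^{d_k}(d_k\theta + b).
$$
For a formal exponent $\rho$ one looks for $\phi(t,\rho) = \sum_{n\ge 0} c_n(\rho)\,t^{\rho+n}$; substitution into $\scH\phi=0$ gives the indicial equation $\scH_1(\rho)=0$ and the recursion $c_n(\rho)\,\scH_1(\rho+n) = c_{n-1}(\rho)\,\scH_2(\rho+n-1)$ for $n\ge 1$, whence $c_n(\rho) = c_0(\rho)\prod_{j=1}^n \scH_2(\rho+j-1)/\scH_1(\rho+j)$ and $\scH\,\phi(t,\rho) = c_0(\rho)\,\scH_1(\rho)\,t^{\rho}$. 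First I would check, using the telescoping identities $\prod_{j=1}^n\scH_2(\rho+j-1)=\prod_k\prod_{m=1}^{nd_k}(d_k\rho+m)$ and $\prod_{j=1}^n\scH_1(\rho+j)=\prod_\nu\prod_{m=1}^{nq_\nu}(q_\nu\rho+m)$, that with $\rho=\rho_\alpha+P_\alpha$ (the class $P_\alpha$ nilpotent) the coefficients in \eqref{eq:Gamma-series} are precisely $c_n(\rho_\alpha+P_\alpha)$ for a particular $c_0$, namely the $n$-independent factors occurring in \eqref{eq:Gamma-series}. Since the roots $a/q_\nu$ of $\scH_1$ lie in $[0,1)$, no two of them differ by a nonzero integer, so $\scH_1(\rho_\alpha+j)\ne 0$ for $j\ge 1$; hence $c_n(\rho_\alpha+P_\alpha)$ is well defined in $\bC[P_\alpha]/(P_\alpha^{\mu_\alpha})$ and $c_0(\rho_\alpha+P_\alpha)$ is a unit there, its constant term being a nonzero ratio of products of positive numbers. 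As $\scH_1$ has $\rho_\alpha$ as a root of multiplicity $\mu_\alpha$, we have $\scH_1(\rho_\alpha+P_\alpha)=0$ in $\bC[P_\alpha]/(P_\alpha^{\mu_\alpha})$, so $\scH$ annihilates \eqref{eq:Gamma-series}, and every coefficient $u_{\alpha,i}$ of $P_\alpha^i$ with $0\le i\le\mu_\alpha-1$ is a solution. Since $u_{\alpha,i}$ has leading term a nonzero multiple of $t^{\rho_\alpha}(\log t)^i$ up to terms of smaller power of $\log t$ or higher power of $t$, these $\sum_\alpha\mu_\alpha=Q$ functions have pairwise incomparable leading behaviour and hence form a basis of the local solution space, which is the first assertion.

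For the second assertion I would observe that the common factors removed in passing from $\scH$ to $\scH^{\reduced}$ are all common \emph{left} factors. Indeed, $\theta_t-c$ is a common right factor of $\scH_1(\theta_t)$ and $t\,\scH_2(\theta_t)$ only when $c$ is a common root of $\scH_1$ and $\scH_2$, which is impossible as the roots of $\scH_1$ lie in $[0,1)$ and those of $\scH_2$ in $[-1,0)$; while, using $(\theta_t-c)\circ t = t\circ(\theta_t-c+1)$, one sees that $\theta_t-c$ is a common left factor precisely when $c=a/q_\nu=1-b/d_k$ for suitable indices, that is $c=\rho_\alpha$ for some $\alpha$, and it can be stripped off with multiplicity $\nu_\alpha$. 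Thus $\scH = L\circ\scH^{\reduced}$ with $\operatorname{ord}L = Q-\Qred$, so every solution of $\scH^{\reduced}u=0$ is a solution of $\scH u=0$; moreover $\scH^{\reduced}$ has the same exponents $\rho_\alpha$ at the origin, now with multiplicities $\mu_\alpha-\nu_\alpha$, consistently with $\Qred=\sum_\alpha(\mu_\alpha-\nu_\alpha)$.

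To conclude I would argue with the $\log$-degree filtration. Because the exponents of $\scH^{\reduced}$ at $0$ still lie in $[0,1)$ there is no resonance, so the Frobenius solutions of $\scH^{\reduced}$ with exponent $\rho_\alpha$ span a space of dimension $\mu_\alpha-\nu_\alpha$ and $\log$-degree at most $\mu_\alpha-\nu_\alpha-1$. By the inclusion $\{\scH^{\reduced}u=0\}\subseteq\{\scH u=0\}$, each of these solutions is an $\scH$-solution of exponent $\rho_\alpha$ and $\log$-degree $\le\mu_\alpha-\nu_\alpha-1$. On the other hand a combination of $u_{\alpha,0},\dots,u_{\alpha,\mu_\alpha-1}$ has $\log$-degree equal to the largest index occurring with nonzero coefficient, so the space of $\scH$-solutions of exponent $\rho_\alpha$ and $\log$-degree $\le j$ is exactly $\langle u_{\alpha,0},\dots,u_{\alpha,j}\rangle$. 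Hence every solution of $\scH^{\reduced}u=0$ lies in $\bigoplus_\alpha\langle u_{\alpha,0},\dots,u_{\alpha,\mu_\alpha-\nu_\alpha-1}\rangle$, and since both spaces have dimension $\Qred$ they coincide, which is the claim.

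The hard part will be the bookkeeping in the second step: verifying that "removing common factors" strips off exactly the left factor $(\theta_t-\rho_\alpha)^{\nu_\alpha}$ for each $\alpha$ — in particular that $\nu_\alpha\le\mu_\alpha$ and that no further cancellation is possible — so that $\rho_\alpha$ occurs with multiplicity exactly $\mu_\alpha-\nu_\alpha$ among the exponents of $\scH^{\reduced}$ at the origin; this is the structural fact underlying the already-stated identity $\Qred=\sum_\alpha(\mu_\alpha-\nu_\alpha)$. Everything else — the Frobenius recursion, the telescoping matching it with \eqref{eq:Gamma-series}, and the $\log$-degree dimension count — is routine.
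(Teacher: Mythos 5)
Your plan is correct in outline. For the first assertion it is essentially the paper's own argument: the paper also applies $\scH$ to the $\Gamma$-series and observes that the output is $\bigl(\prod_{\nu}\prod_{a}(q_\nu P_\alpha+q_\nu\rho_\alpha-a)\bigr)e^{P_\alpha\log t}t^{\rho_\alpha}$, which is divisible by $P_\alpha^{\mu_\alpha}$ --- exactly your Frobenius recursion packaged into one identity, with your telescoping check being the verification the paper leaves implicit. For the second assertion you take a genuinely different route. The paper represents the reduced solutions by Mellin--Barnes integrals with integrand $\prod_\nu\Gamma(q_\nu w)\cdot\prod_k\Gamma(d_k w)^{-1}s^{-w}$ and reads off from the pole order $\mu_\alpha-\nu_\alpha$ at $w=-\rho_\alpha-n$ that the residues land in the span of the coefficients of $P_\alpha^i$ with $i\le\mu_\alpha-\nu_\alpha-1$; you instead factor $\scH=L\circ\scH^{\reduced}$ through the common left factors and pin down the image of the reduced solution space via the $\log$-degree filtration. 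Both arguments close with the same dimension count $\Qred=\sum_\alpha(\mu_\alpha-\nu_\alpha)$. Your version is more elementary and self-contained, and it makes the word ``correspond'' in the statement precise as an equality of two $\Qred$-dimensional subspaces; the paper's version has the advantage of exhibiting the reduced solutions in the Mellin--Barnes form that is reused elsewhere in Section 2 and tied to period integrals in Section 5.

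The ``hard part'' you defer is real, but it is exactly the structural input the paper also assumes, so I would not count it as a gap in your approach so much as a point needing one clarification. As literally defined (with $1\le b\le d_k-1$), $\nu_\alpha$ undercounts at $\rho_p=0$: for $\bq=(1,1,1,1,1)$, $\bd=(5)$ one gets $\nu_1=0$ and $\sum_\alpha(\mu_\alpha-\nu_\alpha)=5$, while $\Qred=4$. The quantity that both your left-factor bookkeeping and the paper's pole-order computation actually produce is the count with $1\le b\le d_k$, i.e.\ the multiplicity of $e^{2\pi\sqrt{-1}\rho_\alpha}$ as a root of $\prod_k(x^{d_k}-1)$; with that reading your argument goes through. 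The inequality $\nu_\alpha\le\mu_\alpha$ that you still owe cannot be derived from $\sum_\nu q_\nu=\sum_k d_k$ alone (it fails for $\bq=(2,2)$, $\bd=(1,1,1,1)$); it is equivalent to the paper's stated identity for $\Qred$ and should be treated as a standing hypothesis of the geometric setting rather than something to prove.
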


\begin{proof}
Let
\begin{align} \label{eq:Gamma-series2}
 H
  = e^{P_\alpha \log t} t^{\rho_\alpha}
 \sum_{n = 0}^\infty
  t^n
   \frac{
    \prod_{k = 1}^r
    \prod_{\substack{b \colon \pair{b} = \pair{\rho_\alpha d_k} \\
                          0 < b \le (n + \rho_\alpha) d_k}}
          (d_k P_\alpha + b)
    }
    {
    \prod_{\nu = 0}^N
    \prod_{\substack{b \colon \pair{b} = \pair{\rho_\alpha q_\nu} \\
                          0 < b \le (n + \rho_\alpha) q_\nu}}
          (q_\nu P_\alpha + b)
    }
\end{align}
be the $\Gamma$-series in \eqref{eq:Gamma-series}
considered as a formal power series
in $P_\alpha$, $t$ and $\log t$.
Here $e^{P_\alpha \log t}$ is considered as
$
 \sum_{n=0}^\infty (P_\alpha \log t)^n / (n!),
$
which satisfies
$
 \theta \lb e^{P_\alpha \log t} \rb = P_\alpha e^{P_\alpha \log t}.
$
A direct calculation shows
\begin{align*}
\left[
  \prod_{\nu=0}^N
  \prod_{a=0}^{q_\nu - 1} (q_\nu \theta - a)
  - 
 t \prod_{k=1}^r
  \prod_{b=1}^{d_k} (d_k \theta + b)
\right] I
 &= \left(
    \prod_{\nu = 0}^N
    \prod_{a=0}^{q_\nu-1}
          (q_\nu P_\alpha + q_\nu \rho_\nu - a)
 \right)
 e^{P_\alpha  \log t} t^{\rho_\alpha},
\end{align*}
where the right hand side is proportional to
$P_\alpha^{\mu_\alpha}$,
so that the coefficients of $P_\alpha^i$
for $i = 0, 1, \ldots, \mu_\alpha - 1$ give solutions
to the hypergeometric equation \eqref{eq:hge}.
If one think of the series in \eqref{eq:Gamma-series2}
as a formal power series in $t$ and $\log t$
with values in the $\mu_\alpha$-dimensional vector space
$\bC[P_\alpha] / (P_\alpha^{\mu_\alpha})$,
then it is a polynomial in $t^{\rho_\alpha}$ and $\log t$
(this is clear from \eqref{eq:Gamma-series2})
and a convergent power series in $t$
(this follows either from the fact that the origin is a regular singularity of \eqref{eq:hge},
or a direct estimate of the radius of convergence,
which gives $\lambda$),
which gives a multi-valued solution to \eqref{eq:hge}.

Alternatively,
one can also argue as follows:
For the set of poles $\Pi_\alpha=\{w \in \bC: |w +\rho_\alpha + n| =\epsilon, n \in \bN, 0<\epsilon \ll 1 \}$ the Mellin-Barnes integral 
$$ \frac {1}{2 \pi i} \int_{\Pi_\alpha} \prod_{\nu=0}^N \Gamma(q_\nu w)\prod_{k=1}^r \Gamma(1-d_k w) s^{-w} dw,   $$
with $s=(-1)^Qt$ gives us a solution that is the $P_\alpha^{\mu_\alpha-1}$ part of \eqref{eq:Gamma-series}.
This can be seen from the following calculation
$$ \sum_{n \geq 0} Res_{w =-\rho_\alpha -n} \prod_{\nu=0}^N \Gamma(q_\nu w)\prod_{k=1}^r \Gamma(1-d_k w) s^{-w}$$
$$ = \sum_{n \geq 0} \frac{1}{(\mu_\alpha-1)!} (\frac{d}{dw})^{\mu_\alpha-1} ((w +\rho_\alpha + n)^{\mu_\alpha} \prod_{\nu=0}^N\Gamma(q_\nu w)\prod_{k=1}^r \Gamma(1-d_k w) s^{-w})|_{w =-\rho_\alpha -n}$$
$$ = \sum_{n \geq 0} \frac{1}{(\mu_\alpha-1)!} (-\frac{d}{dP})^{\mu_\alpha-1} ((-P)^{\mu_\alpha} \prod_{\nu=0}^N \Gamma(-q_\nu (\rho_\alpha + n+P))\prod_{k=1}^r \Gamma(1+d_k (\rho_\alpha + n+P)) s^{\rho_\alpha +n+P}|_{P=0}$$
$$ = -\sum_{n \geq 0} \frac{(-1)^{nQ}}{(\mu_\alpha-1)!} \sum_{\kappa= 0}^{\mu_\alpha-1}  \binom{\mu_\alpha-1}{\kappa} \left((\frac{d}{dP})^{\mu_\alpha-1-\kappa} \frac{\prod_{k=1}^r \prod_{j=1}^{n d_k}(d_k (\rho_\alpha+P)+j)}{\prod_{\nu=0}^N \prod_{i=1}^{n q_\nu}(q_\nu (\rho_\alpha +P)+i)} ((-1)^{Q}t)^{\rho_\alpha + n+P}\right)|_{P=0}$$
$$\times \left((\frac{d}{dP})^{\kappa} P^{\mu_\alpha} \prod_{\nu=0}^N\Gamma(-q_\nu (\rho_\alpha +P)) \prod_{k=1}^r \Gamma(1+d_k (\rho_\alpha + n+P))\right){\Large \mid}_{P=0}$$

To get a solution with $P_\alpha^{\mu_\alpha-2}$ part of \eqref{eq:Gamma-series} we choose $\nu_1 \in [0,N]$
such that $\rho_\alpha = \frac{a}{q_{\nu_1}}$ for some $a \in [0, q_{\nu_1}-1]$
and calculate
$$ \frac {1}{2 \pi i} \int_{\Pi_\alpha} (-1)^{q_{\nu_1} w}\frac{\prod_{\nu=0, \nu \not = \nu_1}^N \Gamma(q_\nu w)\prod_{k=1}^r \Gamma(1-d_k w) s^{-w}}{\Gamma(1-q_{\nu_1}w)} dw   $$

In this way we increase the number of $\Gamma$-factors in the denominator. The factor $ \Gamma(q_{\nu}w) $ multiplied by a function $\frac{sin \;(\pi q_{\nu}w)}{\pi} (-1)^{q_{\nu}w} $ with period $2 \pi \sqrt{-1}$ gives $\frac{(-1)^{q_\nu w}}{\Gamma(1-q_{\nu}w)}.$ Thus we obtain a $\mu_\alpha$ tuple
of Mellin-Barnes integral solutions to \eqref{eq:hge} that are linear combinations of \eqref{eq:Gamma-series}. To get \eqref{eq:Gamma-series} solutions from the Mellin-Barnes integral solutions we need only to solve a system of linear equations determined by a $\mu_\alpha \times \mu_\alpha$  upper triangle matrix with non-zero diagonal entries.  

As for the statement on the solutions to the irreducible operator $H^{red}$
we shall consider the Mellin-Barnes integrals 

$$ \frac {1}{2 \pi \sqrt{-1} } \int_{\Pi_\alpha} \frac{\prod_{\nu=0}^N \Gamma(q_\nu w)}{ \prod_{k=1}^r \Gamma(d_k w)} s^{-w} dw,   $$
whose poles $w \in \Pi_\alpha$ are at most of order $\mu_\alpha - \nu_\alpha.$
On calculating its residues, we obtain a subspace of solutions to \eqref{eq:hge} of dimension $Q^{red} = \sum_{\alpha=1}^p (\mu_\alpha - \nu_\alpha). $
\end{proof}

\begin{corollary} \label{cr:z0}
There is a basis
$$
 \X(z) = (X_1(z), \dots X_Q(z))
$$
of solutions to \eqref{eq:hge}
such that the monodromy  around $z = 0$ is given by
$$
 \X(z) \to \X(z) \cdot E_0.
$$
\end{corollary}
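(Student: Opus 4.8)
The plan is to extract the desired basis directly from the Frobenius solutions constructed in Lemma \ref{lm:Gamma-series}, and then check that the monodromy around $z = 0$ acts on this basis by the block-diagonal matrix $E_0$. First I would fix $\alpha \in \{1, \dots, p\}$ and look at the truncated $\Gamma$-series $H$ in \eqref{eq:Gamma-series2}, viewed as a polynomial in $P_\alpha$ modulo $P_\alpha^{\mu_\alpha}$ with coefficients that are convergent power series in $t$ times $t^{\rho_\alpha}$ and a polynomial in $\log t$; by Lemma \ref{lm:Gamma-series} the coefficients of $P_\alpha^0, P_\alpha^1, \dots, P_\alpha^{\mu_\alpha - 1}$ form a set of $\mu_\alpha$ solutions to \eqref{eq:hge}, and taking the union over $\alpha$ gives $\sum_\alpha \mu_\alpha = Q$ solutions. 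One must note that these are linearly independent over $\bC$: solutions attached to distinct exponents $\rho_\alpha$ are separated because they involve $t^{\rho_\alpha}$ with incongruent fractional parts $\pair{\rho_\alpha}$, while within a fixed $\alpha$ the coefficient of $P_\alpha^i$ has leading term $t^{\rho_\alpha} (\log t)^i / i!$ plus lower powers of $\log t$, so the triangular structure in $\log t$ guarantees independence. Hence they constitute a basis $\X(z)$.

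Next I would compute the monodromy. Replacing $\log t$ by $\log t + 2\pi\I$ under analytic continuation around $t = 0$ (equivalently $z = 0$), the factor $e^{P_\alpha \log t} t^{\rho_\alpha}$ is multiplied by $e^{2\pi\I P_\alpha} e^{2\pi\I \rho_\alpha} = e_\alpha \, e^{2\pi\I P_\alpha}$, while the remaining power series in $t$ (with no $\log t$) is unchanged. So the whole block $H$ is multiplied by $e_\alpha e^{2\pi\I P_\alpha} = e_\alpha \left(1 + 2\pi\I P_\alpha + \dots + \tfrac{(2\pi\I)^{\mu_\alpha-1}}{(\mu_\alpha-1)!} P_\alpha^{\mu_\alpha-1}\right)$ modulo $P_\alpha^{\mu_\alpha}$. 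Reading off the coefficient of each $P_\alpha^i$ shows that, in the basis given by the coefficients of $P_\alpha^0, \dots, P_\alpha^{\mu_\alpha-1}$, the monodromy acts by $e_\alpha$ times a unipotent upper-triangular Jordan-type block. The one subtlety is matching conventions: the matrix in the statement is $e_\alpha \id_{\mu_\alpha} + J_{\mu_\alpha,-}$, so I would either normalize the $P_\alpha$-coefficients (rescaling the coefficient of $P_\alpha^i$ by a suitable power of $2\pi\I$, and possibly conjugating the nilpotent part to its lower-triangular single-subdiagonal form) or reverse the order of the basis vectors within the block; both are harmless changes of basis internal to each $\alpha$-block and do not affect the block-diagonal shape. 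After this cosmetic adjustment the monodromy around $z=0$ becomes exactly $\X(z) \to \X(z) \cdot E_0$.

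The main obstacle I anticipate is bookkeeping rather than conceptual: precisely pinning down the normalization so that the unipotent part is $J_{\mu_\alpha,-}$ with $1$'s on a single (sub)diagonal, and confirming that the $\Gamma$-series really is polynomial of degree exactly $\mu_\alpha - 1$ in $\log t$ within each block (so that the Jordan block is a full single block of size $\mu_\alpha$, not a direct sum of smaller ones). The polynomial-degree claim is already asserted in the proof of Lemma \ref{lm:Gamma-series} — it follows because the numerator/denominator product in \eqref{eq:Gamma-series2} contributes no $\log t$ and the only source of $\log t$ is $e^{P_\alpha \log t}$ truncated at $P_\alpha^{\mu_\alpha - 1}$ — so I would simply invoke it. Everything else is the routine substitution $\log t \mapsto \log t + 2\pi\I$ and reading off matrix entries.
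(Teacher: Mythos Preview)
Your proposal is correct and follows essentially the same route as the paper: both isolate the monodromy contribution from the factor $e^{P_\alpha \log t} t^{\rho_\alpha}$, observe that analytic continuation multiplies the $\alpha$-block by $e_\alpha \, e^{2\pi\sqrt{-1} P_\alpha}$ acting on $\bC[P_\alpha]/(P_\alpha^{\mu_\alpha})$, and then appeal to the Jordan normal form of this operator to obtain $e_\alpha \id_{\mu_\alpha} + J_{\mu_\alpha,-}$ after an internal change of basis. The paper phrases this last step more tersely (simply asserting the Jordan form), whereas you spell out the rescaling and reordering; your added remark on linear independence is also fine and implicit in the paper's use of Lemma~\ref{lm:Gamma-series}.
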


\begin{proof}
Recall that
$
 \lambda = \left. \prod_{\nu=0}^N q_\nu^{q_\nu}
  \right/ \prod_{k=1}^r d_k^{d_k}
$
and $t = \lambda z$.
The monodromy of the $\Gamma$-series in \eqref{eq:Gamma-series2}
around $t=0$ comes only from
\begin{align}
 e^{P_\alpha \log t} t^{\rho_\alpha}
  &\mapsto e^{P_\alpha \log (\exp(2 \pi \sqrt{-1}) \, t)} (\exp(2 \pi \sqrt{-1}) \, t)^{\rho_\alpha} \\
  &= e^{2 \pi \sqrt{-1} P_\alpha} \cdot \exp(2 \pi \sqrt{-1} \rho_\alpha)
    \cdot e^{P_\alpha \log t} t^{\rho_\alpha},
\end{align}
and other terms are single-valued.
Now Corollary \ref{cr:z0}
follows from the fact that the linear operator
\begin{align*}
 e^{2 \pi \sqrt{-1} P_\alpha} &\cdot \exp(2 \pi \sqrt{-1} \rho_\alpha) \\
  &= \lb 1 + 2 \pi \sqrt{-1} P_\alpha + \cdots
   + \frac{1}{(\mu_\alpha-1)!} \lb 2 \pi \sqrt{-1} P_\alpha \rb^{\mu_\alpha-1} \rb \cdot e_\alpha
\end{align*}
on the $\mu_\alpha$-dimensional vector space
$\bC[P_\alpha]/(P_\alpha^{\mu_\alpha})$
has Jordan normal form
$
 e_\alpha \id_{\mu_\alpha} + J_{\mu_\alpha, -}.
$

Alternatively,
one can also argue as follows:
This can be seen by a generalization of Frobenius method
for a differential equation with multiple local exponents at $s=0.$
The residue
$$
 \Res_{w =-\rho_\alpha } \prod_{\nu=0}^N
  \Gamma(q_\nu w)\prod_{k=1}^r \Gamma(1-d_k w) s^{-w}
$$
admits an asymptotic expansion of the following type
$$
 \sum_{j=1}^{\mu_\alpha}(\log s)^{\mu_\alpha - j} 
 s^{\rho_\alpha} h_j(s)
$$
for holomorphic functions $h_j(s)$,
$j=1,\cdots ,\sigma_\alpha$ at $s=0$.
If we apply the monodromy action
$s \mapsto s e^{2 \pi \sqrt -1}$
to this series $\mu_\alpha$ times repeatedly,
we get a local basis of a rank $\mu_\alpha$ subspace of solutions.
The monodromy action induced by a turn around $s=0$
on this subspace of solutions has the Jordan normal form
$e_\alpha \id_{\mu_\alpha} + J_{\mu_\alpha, -}.$
For each $\alpha$, this subspace is invariant
with respect to the monodromy action within the total solution space,
we get the statement.
\end{proof}

Set $\sigma_0 = 0$ and
$
 \sigma_i = \sum_{\alpha = 1}^i \mu_\alpha
$
for $i = 1, \dots, p$.

\begin{lemma} \label{lm:A}
$X_{\sigma_i}(z)$ is singular at $z = 1$ for any $1 \le i \le p$.
\end{lemma}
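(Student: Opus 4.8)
The plan is to exhibit a monodromy-theoretic characterization of singularity at $z=1$ and then verify that the specific solution $X_{\sigma_i}$ satisfies it. The local exponents at $z=1$ are $0,1,2,\dots,Q-2$ together with $(n-1)/2$, so the local monodromy $h_1$ is a pseudo-reflection: it fixes a hyperplane pointwise and has one nontrivial eigenvalue $e^{\pi\sqrt{-1}(n-1)} = \pm 1$ (with a possible logarithmic term when that eigenvalue is $1$, i.e.\ $n$ odd). Consequently a solution $X(z)$ is regular (single-valued, or more precisely extends holomorphically) at $z=1$ if and only if it lies in the $h_1$-fixed hyperplane; equivalently, $X$ is \emph{singular} at $z=1$ precisely when its germ is not fixed by $h_1$. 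So the lemma is equivalent to the assertion that $h_1 \cdot X_{\sigma_i} \neq X_{\sigma_i}$ for each $i$.

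The key input is the relation $h_0 \cdot h_1 \cdot h_\infty = 1$, which gives $h_1 = h_0^{-1} h_\infty^{-1}$. From Corollary \ref{cr:z0}, in the basis $\X(z)$ the monodromy $h_0$ acts by the block-diagonal matrix $E_0$; in particular $X_{\sigma_i}$ is the \emph{top} vector of the $\alpha=i$ Jordan block (the one with $(\log s)^{\mu_i - 1}$), and $E_0$ maps $X_{\sigma_i}\mapsto e_i X_{\sigma_i}$ modulo the lower-index vectors of the same block — it is \emph{not} a pure eigenvector unless $\mu_i = 1$, but its image under $h_0 - e_i$ lands inside $\vspan(X_{\sigma_{i-1}+1},\dots,X_{\sigma_i})$. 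The strategy is then: suppose for contradiction that $h_1 X_{\sigma_i} = X_{\sigma_i}$, so $X_{\sigma_i}$ is a common object for $h_1$ and $h_\infty$ in the sense that $h_\infty^{-1} X_{\sigma_i} = h_0 X_{\sigma_i}$. Comparing the two sides: the left side is governed by the monodromy at infinity, whose characteristic polynomial is $\prod_k (T^{d_k}-1)$, while the right side $h_0 X_{\sigma_i} = e_i X_{\sigma_i} + (\text{lower block vectors})$ has a very restricted shape. One shows this forces $X_{\sigma_i}$ into a proper $h_\infty$-invariant subspace on which $e_i$ is an eigenvalue with high multiplicity, and then tracks this back to the Hodge-number computation: the dimension count $Q^{\reduced} = \sum_\alpha(\mu_\alpha - \nu_\alpha)$ from Lemma \ref{lm:Gamma-series} says that exactly $\nu_\alpha$ of the $\mu_\alpha$ solutions attached to $\rho_\alpha$ drop out in passing to $\scH^{\reduced}$, and these are precisely the \emph{bottom} $\nu_\alpha$ vectors of the block. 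The top vector $X_{\sigma_i}$ survives to $\scH^{\reduced}$ whenever $\mu_i > \nu_i$; and when $\mu_i = \nu_i$ (so $X_{\sigma_i}$ is killed in the reduced system) one instead uses that the common factor removed from $\scH$ contributes exactly the eigenspace structure forcing $X_{\sigma_i}$ to be singular at $z=1$ by a direct examination of the reducible factor.

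The main obstacle I anticipate is the last point: disentangling the two regimes $\mu_i > \nu_i$ and $\mu_i = \nu_i$ cleanly. In the irreducible regime one can lean on the classical Beukers--Heckman/Levelt picture — for the irreducible hypergeometric system the solution realizing the top of a Jordan block at $0$ is automatically not fixed by the reflection at $z=1$, because otherwise it would lie in an $h_0$- and $h_\infty$-invariant proper subspace, contradicting irreducibility of $\scL^{\reduced}$. The genuinely new work is the reducible part, where $\scH = \scH^{\reduced} \cdot (\text{common factor})$: there one must show that even the "extra" solutions coming from the common factor — which correspond to the bottom vectors of the Jordan blocks — interact with $h_1$ nontrivially when they sit at position $\sigma_i$, and this requires writing out the common factor explicitly in terms of the overlapping data $\{(q_\nu,a)\}$ and $\{(d_k,b)\}$ and checking that its own local monodromy at $z=1$ is still a nontrivial pseudo-reflection on the relevant line. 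I would handle this by an explicit residue/asymptotic analysis at $z=1$ of the Mellin--Barnes representative built in the proof of Lemma \ref{lm:Gamma-series}, detecting the $(n-1)/2$-exponent branch directly in the integrand.
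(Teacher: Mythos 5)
Your proposal does not close the argument. The decisive step --- showing $h_1\cdot X_{\sigma_i}\neq X_{\sigma_i}$ --- is never actually carried out: in the case you call ``reducible'' ($\mu_i=\nu_i$) you defer everything to an unexecuted ``explicit residue/asymptotic analysis at $z=1$ of the Mellin--Barnes representative,'' which is precisely where the content of the lemma would have to live; and in the other case the appeal to irreducibility of $\scL^{\reduced}$ is not valid as stated, because a single vector $v$ with $h_1v=v$ only gives the one relation $h_\infty^{-1}v=h_0v$, which does not propagate to $h_\infty^{-i}v=h_0^iv$ for $i\ge 2$ and hence does not place $v$ in a proper subspace invariant under both $h_0$ and $h_\infty$. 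There is also a factual slip in your setup: since $E_0$ acts on the row vector $\X$ by $X_j\mapsto e_\alpha X_j+X_{j+1}$ within each block, $X_{\sigma_i}$ is the \emph{bottom} of the Jordan block, i.e.\ a genuine eigenvector of the local monodromy at $0$ --- the log-free solution $z^{\rho_i}\cdot(\text{convergent power series})$ --- not the vector carrying $(\log s)^{\mu_i-1}$. This is not a minor point, because that very property is what makes the lemma easy.

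The paper's proof is a two-line Liouville argument that bypasses all of this. If $X_{\sigma_i}$ were holomorphic at $z=1$, then $z^{-\rho_i}X_{\sigma_i}(z)$ would be single-valued and holomorphic on all of $\bC$ (the only finite singularities of the equation are $z=0$ and $z=1$, and at $z=0$ the log-free Frobenius solution is $z^{\rho_i}$ times a convergent power series); regularity of the singularity at infinity gives polynomial growth, so $z^{-\rho_i}X_{\sigma_i}$ would be a polynomial, contradicting the fact that the $\Gamma$-series defining it does not terminate. I recommend abandoning the monodromy-theoretic route for this lemma: besides being incomplete, it builds on the pseudo-reflection structure of $h_1$ whose precise form (Lemma \ref{lm:A'}) is itself derived downstream from Lemma \ref{lm:A}, so one would have to take care not to argue circularly.
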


\begin{proof}
Assume that $X_{\sigma_i}(z)$ is holomorphic at $z = 1$.
Since $X_{\sigma_i}(z)$ is a solution to \eqref{eq:hge},
its only possible singular points on $\bC$ are $z = 0$ and $1$,
so that $z^{- \rho_i} X_{\sigma_i}(z)$ is in fact an entire function.
Since \eqref{eq:hge} has a regular singularity at infinity,
$X_{\sigma_i}(z)$ has at most polynomial growth at infinity.
This implies that $z^{- \rho_i} X_{\sigma_i}(z)$ is a polynomial,
which cannot be the case since the series \eqref{eq:Gamma-series}
defining $X_{\sigma_i}(z)$ around the origin is infinite.
\end{proof}

\begin{lemma} \label{lm:around-1}
There is a fundamental solution
$\Y(z) = (Y_1(z), \dots, Y_Q(z))$
of \eqref{eq:hge} around $z = 1$
such that $Y_i(z)$ is holomorphic for $i = 1, \dots, Q - 1$.
\end{lemma}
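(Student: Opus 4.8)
The plan is to exhibit a $(Q-1)$-dimensional space of solutions of \eqref{eq:hge} that are holomorphic at $z=1$, and then to extend any basis of it to a fundamental system near $z=1$. Recall from the table of local exponents that $z=1$ is a regular singular point of \eqref{eq:hge} with exponents $0,1,2,\dots,Q-2$ together with $(n-1)/2$. The only real difficulty is that the exponents $0,1,\dots,Q-2$ are resonant with one another (and, when $n$ is odd, one of them coincides with $(n-1)/2$), so a priori logarithmic solutions could occur; the core of the argument is that they do not.

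First I would rewrite $\scH$ in the coordinate $z$ (with $t=\lambda z$) as an ordinary differential operator $a_Q(z)(d/dz)^Q+\dots+a_0(z)$ with polynomial coefficients $a_j$. Since $\scH=P(\theta_t)-tR(\theta_t)$ with $P$ and $R$ of degree $Q$ in $\theta_t$ and with the ratio of their leading coefficients equal to $\lambda$, one computes $a_Q(z)\propto z^Q(1-z)$; thus $z=1$ is a \emph{simple} zero of the leading coefficient, while $a_0,\dots,a_{Q-1}$ are holomorphic at $z=1$.

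Next I would run the Frobenius method at $z=1$, looking for solutions $y=\sum_{m\ge 0}c_m(z-1)^m$. Substituting into the equation and collecting powers of $z-1$, the vanishing of the leading coefficient to first order forces the coefficients of $(z-1)^{-Q+1},\dots,(z-1)^{-1}$ to vanish identically, the coefficient of $(z-1)^0$ imposes a single linear relation among $c_0,\dots,c_{Q-1}$, and for each $m\ge Q$ the coefficient of $(z-1)^{m-Q+1}$ determines $c_m$ uniquely in terms of $c_0,\dots,c_{m-1}$, because the indicial polynomial $\rho(\rho-1)\cdots(\rho-Q+2)\bigl(\rho-(n-1)/2\bigr)$ has no integer root $\ge Q-1$ (note $(n-1)/2\le (Q-2)/2<Q-1$). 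Hence no logarithm is ever needed, and the power series solutions form a space of dimension $Q-1$; these formal series converge near $z=1$ because $z=1$ is a regular singular point.

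Finally, picking a basis $Y_1,\dots,Y_{Q-1}$ of this space and adjoining one further solution $Y_Q$ to obtain a fundamental system of \eqref{eq:hge} near $z=1$ yields the desired $\Y(z)$. The main obstacle, as indicated, is the resonance among the integer exponents; it is dealt with entirely by the observation that $z=1$ is only a \emph{simple} zero of the leading coefficient. Conceptually this is the statement that the local monodromy $h_1$ around $z=1$ is a (pseudo-)reflection, so that $\ker(h_1-\id)$ is $(Q-1)$-dimensional and, all local exponents at $z=1$ being $\ge 0$, consists of holomorphic solutions; one could alternatively invoke the Levelt--Beukers--Heckman description of the monodromy at a finite singular point, but in its cleanest form that presupposes irreducibility, which fails for $\scH$, whereas the indicial computation above is insensitive to reducibility.
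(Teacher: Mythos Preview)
Your argument is correct and follows a genuinely different, more elementary route than the paper's.  The key observation---that $z=1$ is a \emph{simple} zero of the leading coefficient $a_Q(z)$ while $a_0,\dots,a_{Q-1}$ are holomorphic there---forces the indicial polynomial at $z=1$ to carry the factor $\rho(\rho-1)\cdots(\rho-Q+2)$, and guarantees that in the power-series ansatz $y=\sum_{m\ge 0}c_m(z-1)^m$ the recursion first bites at the $(z-1)^0$ coefficient and thereafter determines each $c_m$ for $m\ge Q-1$ uniquely, since all indicial roots lie strictly below $Q-1$.  This yields a $(Q-1)$-parameter family of convergent holomorphic solutions, which is exactly the lemma.

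The paper instead peels off the reducible part of $\scH$ step by step: it repeatedly factors a $\theta_t$ from the left (using $t(d_k\theta_t+d_k)=d_k\theta_t\, t$), each time dropping the order by one and removing the top integer exponent, until the irreducible operator $\scH^{\reduced}$ of rank $\Qred$ remains; for that piece it appeals to the geometry of the mirror family, which acquires an ordinary double point at $z=1$, so that the vanishing cycle furnishes the single singular solution and cycles orthogonal to it give the remaining holomorphic ones.  This buys strictly more than the lemma asserts---namely the explicit shape of $Y_Q$ (a half-integer power of $z-1$ when $n$ is even, the same times a logarithm when $n$ is odd)---which the paper immediately uses to write down the local monodromy matrix $E_1$.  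Your approach needs neither the factorization nor any geometric input and is the cleaner way to establish the lemma as stated; should you later need the precise form of $Y_Q$, a short additional Frobenius computation at the exponent $(n-1)/2$ would supply it.
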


\begin{proof}
We prove the following stronger result;
$Y_i$ has a series expansion
$$
 Y_i = (z - 1)^{i - 1} \sum_{m \ge 0} G_m (z - 1)^m
$$
for $i = 1, \dots, Q - 1$, and
$Y_Q(z)$ has the series expansion
$$
 Y_Q(z)
  = (z - 1)^{\frac{n-1}{2}} \sum_{m \ge 0} G'_m (z - 1)^m
      + \sum_{m \ge 0} G''_m (z - 1)^m
$$
when $n$ is even, and
$$
 Y_Q(z)
  = (z - 1)^{\frac{n-1}{2}} \log (z - 1)
     \left(
      \sum_{m \ge 0} G'_m (z - 1)^m
     \right)
     + \sum_{m \ge 0} G''_m (z - 1)^m
$$
when $n$ is odd.
Since $Q - 2$ is the largest exponent,
one can find a series solution
$$
 Y_{Q - 1} = (z - 1)^{Q - 2} \sum_{m \ge 0} G_m (z - 1)^m
$$
to \eqref{eq:hge}.

Next we remove the common factor $\theta$ in two terms of \eqref{eq:hge} from the left
to obtain a differential equation;
the factor $q_0 \theta_t$
with $\nu = a = 0$ in the first term is replaced with $q_0$,
and the factor $(d_1 \theta_t + d_1)$
with $k = 1$ and $b = d_1$
in the second term is replaced with $d_1$
(note that $t (d_1 \theta_t + d_1) = d_1 \theta_t t$).
One can see
(cf. e.g. \cite[(2.8)]{Beukers-Heckman})
that the set of local exponents of the resulting equation
at $z = 1$ is given by
$$
 \lc 0, 1, \dots, Q - 3, \frac{n - 1}{2} \rc.
$$
Now $Q - 3$ is the largest exponent, and
one can find a series solution
$$
 Y_{Q - 2} = (z - 1)^{Q - 3} \sum_{m \ge 0} G_m (z - 1)^m
$$
to this equation.

One can continue this process
until the differential equation becomes irreducible of rank $Q^{\reduced}$
whose  set of exponents is given by
$
 \lc 0, 1, \dots, Q^{\reduced}-2, \frac{n-1}{2} \rc.
$
This irreducible differential equation describes the period
of the Calabi-Yau manifold obtained by compactifying $Y_t$
(cf. \cite[Theorem 1.1]{Corti-Golyshev}).
This Calabi-Yau manifold has an ordinary double point at $z = 1$,
and the period integral along the vanishing cycle
gives the singular solution $Y_Q(z)$,
while integrals against classes
orthogonal to the vanishing cycle give holomorphic solutions
$Y_1(z), \dots, Y_{Q^{\reduced}-1}(z)$.
\end{proof}

Lemma \ref{lm:A} and Lemma \ref{lm:around-1} imply the following:

\begin{lemma} \label{lm:A'}
One can choose a fundamental solution
$\Y(z) = (Y_1(z), \dots, Y_Q(z))$
around $z = 1$ so that
the connection matrix
\begin{equation}
 \X(z) = \Y(z) \cdot L_1
\end{equation}
is given by
\begin{equation}
 L_1 =
 \begin{pmatrix}
   1 & 0 & \cdots & 0 & 0 \\
   0 & 1 & \cdots & 0 & 0 \\
   \vdots & \vdots & \ddots & \vdots & \vdots \\
   0 & 0 & \cdots & 1 & 0 \\
   c_1 & c_2 & \cdots & c_{Q - 1} & 1 \\
 \end{pmatrix}
\end{equation}
where
$
 c_{\sigma_i} \ne 0
$
for any $i = 1, \dots, p$.
\end{lemma}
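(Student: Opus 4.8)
\smallskip

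The plan is to build $\Y(z)$ by hand from the basis $\X(z)$ of Corollary \ref{cr:z0}, subtracting off suitable multiples of a single singular solution so that the connection matrix becomes the lower-triangular matrix $L_1$ with $1$'s on the diagonal, and then to read off the non-vanishing of the $c_{\sigma_i}$ directly from Lemma \ref{lm:A}. First I would let $V \subseteq \bC^Q$ be the space of solutions of \eqref{eq:hge} that are holomorphic at $z = 1$. Lemma \ref{lm:around-1} produces $Q-1$ linearly independent solutions holomorphic at $z=1$, so $\dim V \ge Q - 1$; on the other hand $\sigma_p = \mu_1 + \dots + \mu_p = \sum_{\nu=0}^N q_\nu = Q$, so $X_Q = X_{\sigma_p}$, and Lemma \ref{lm:A} with $i = p$ shows $X_Q \notin V$. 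Hence $\dim V = Q-1$ and $\bC^Q = V \oplus \bC X_Q$. Let $c \colon \bC^Q \to \bC$ be the unique linear functional with $\Ker c = V$ and $c(X_Q) = 1$, and set $c_j = c(X_j)$ for $j = 1, \dots, Q-1$ (so that $c_Q = 1$).

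Next I would set $Y_Q = X_Q$ and $Y_j = X_j - c_j X_Q$ for $j = 1, \dots, Q-1$. By construction $c(Y_j) = 0$, i.e.\ $Y_j \in V$, so $Y_1, \dots, Y_{Q-1}$ are holomorphic at $z=1$; being $Q-1$ independent vectors in the $(Q-1)$-dimensional space $V$, they form a basis of it. The change of basis from $\X$ to $\Y$ is lower triangular with unit diagonal, hence invertible of determinant $1$, so $\Y(z) = (Y_1(z), \dots, Y_Q(z))$ is again a fundamental solution of \eqref{eq:hge}. Inverting the relations gives $X_j = Y_j + c_j Y_Q$ for $j < Q$ and $X_Q = Y_Q$, which is precisely $\X(z) = \Y(z) \cdot L_1$ with $L_1$ the matrix in the statement.

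Finally, for each $i = 1, \dots, p$, Lemma \ref{lm:A} says $X_{\sigma_i}$ is singular at $z = 1$, hence $X_{\sigma_i} \notin V = \Ker c$, so $c_{\sigma_i} = c(X_{\sigma_i}) \ne 0$ (for $i = p$ this reads $c_{\sigma_p} = c(X_Q) = 1$), which is the asserted non-vanishing. The real work has already been done in Lemmas \ref{lm:around-1} and \ref{lm:A}: the former supplies the codimension-one subspace $V$, the latter its transversality to each $X_{\sigma_i}$; what remains, carried out above, is only the linear-algebra bookkeeping of choosing a basis adapted to $V$. The one place to stay alert is the dimension count $\dim V = Q-1$, equivalently that the local monodromy at $z=1$ is non-trivial, which is ensured by the infiniteness of the $\Gamma$-series defining $X_{\sigma_p}$.
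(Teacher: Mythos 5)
Your proof is correct and follows exactly the route the paper intends: the authors state Lemma \ref{lm:A'} with no written proof beyond the remark that Lemmas \ref{lm:A} and \ref{lm:around-1} imply it, and your argument (the codimension-one subspace $V$ of solutions holomorphic at $z=1$, the functional $c$ with $\Ker c = V$ normalized by $c(X_Q)=1$, and the non-vanishing $c_{\sigma_i}\ne 0$ read off from the singularity of $X_{\sigma_i}$) is precisely the linear-algebra bookkeeping they leave implicit.
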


When $n$ is odd,
the monodromy of $Y_Q$ around $s = 1$ is given by
$$
 Y_Q(z)
  \to Y_Q(z) + 2 \pi \I (z - 1)^{(n - 1)/2}
                  \sum_{m = 0}^\infty G'_m (z - 1)^m.
$$
The second term is holomorphic at $z = 1$ and 
can be expressed as a linear combination
of $Y_1(z), \ldots, Y_{Q-1}(z)$.
Hence the monodromy around $z = 1$ is given by
$$
 \Y(z) \to \Y(z) \cdot E_1
$$
where
$$
 E_1 =
 \begin{pmatrix}
  1 & 0 & \cdots & 0 & c'_1 \\
  0 & 1 & \cdots & 0 & c'_2 \\
  \vdots & \vdots & \ddots & \vdots & \vdots \\
  0 & 0 & \cdots & 1 & c'_{Q-1} \\
  0 & 0 & \cdots & 0 & 1 \\
 \end{pmatrix}
$$
When $n$ is even,
$$
 Y_Q(z)
  \to - Y_Q(z) + 2 \sum_{m=0}^\infty G'_m (z - 1)^m,
$$
so that the monodromy around $z = 1$ is given by
$$
 \Y(z) \to \Y(z) \cdot E_1.
$$
where
$$
 E_1 =
 \begin{pmatrix}
  1 & 0 & \cdots & 0 & c'_1 \\
  0 & 1 & \cdots & 0 & c'_2 \\
  \vdots & \vdots & \ddots & \vdots & \vdots \\
  0 & 0 & \cdots & 1 & c'_{Q-1} \\
  0 & 0 & \cdots & 0 & - 1 \\
 \end{pmatrix}.
$$
Note that the monodromy of $\Y(z)$ around $z = 0$
is given by
\begin{align*}
 \Y(&z) = \X(z) \cdot L_1^{-1} \\
  &\to \X(z) \cdot E_0 \cdot L_1^{-1}
    = \Y(z) \cdot L_1 \cdot E_0 \cdot L_1^{-1}.
\end{align*}
By a straightforward calculation,
we have the following:
\begin{proposition}
The monodromy matrices $h_0$, $h_1$ and $h_\infty$
around $z = 0$, $1$ and $\infty$
with respect to the basis $\Y(z)$
of solutions of \eqref{eq:hge} are given by
$$
 h_0
  = E_0
   + \begin{pmatrix}
      0 & 0 & \cdots & 0 \\
      \vdots & \vdots & \ddots & \vdots \\
      0 & 0 & \cdots & 0 \\
      \gamma_1 & \gamma_2 & \cdots & \gamma_Q
     \end{pmatrix},
$$
$$
 h_1
  = \begin{pmatrix}
      1 & 0 & \cdots & 0 & g_1 \\
      0 & 1 & \cdots & 0 & g_2 \\
      \vdots & \vdots & \ddots & \vdots & \vdots \\
      0 & 0 & \cdots & 1 & g_{Q-1} \\
      0 & 0 & \cdots & 0 & (-1)^{n-1}
     \end{pmatrix},
$$
$$
 h_\infty^{-1}
  = h_0
   + \begin{pmatrix}
      0 & 0 & \cdots & 0 & \delta_1 \\
      0 & 0 & \cdots & 0 & \delta_2 \\
      \vdots & \vdots & \ddots & \vdots & \vdots \\
      0 & 0 & \cdots & 0 & \delta_Q
     \end{pmatrix}.
$$
\end{proposition}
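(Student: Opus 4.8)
The plan is to read off all three monodromy matrices from the data already collected, using one short matrix manipulation for each. By Corollary~\ref{cr:z0} the monodromy of $\X(z)$ around $z=0$ is $\X(z)\mapsto\X(z)\cdot E_0$, and by Lemma~\ref{lm:A'} one has $\X(z)=\Y(z)\cdot L_1$; conjugating, the monodromy of $\Y(z)$ around $z=0$ becomes $\Y(z)\mapsto\Y(z)\cdot(L_1 E_0 L_1^{-1})$, so $h_0=L_1 E_0 L_1^{-1}$. The computation of the monodromy of $\Y(z)$ around $z=1$ carried out just above gives $h_1=E_1$ outright, in both parities of $n$, with last column $(c'_1,\dots,c'_{Q-1},(-1)^{n-1})^T$. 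Finally $h_0 h_1 h_\infty=1$ forces $h_\infty^{-1}=h_0 h_1=h_0 E_1$. So everything reduces to simplifying $L_1 E_0 L_1^{-1}$ and $h_0 E_1$.

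For $h_0$ I would write $L_1=\id_Q+C$, where $C$ is the matrix whose only nonzero row is the last one, equal to $(c_1,\dots,c_{Q-1},0)$. Because the $(Q,Q)$-entry of $C$ is $0$ we have $C^2=0$, hence $L_1^{-1}=\id_Q-C$ and
$$
h_0=(\id_Q+C)E_0(\id_Q-C)=E_0+C E_0-E_0 C-C E_0 C .
$$
The terms $CE_0$ and $CE_0C$ are supported on the last row since $C$ is, and the point to check is that $E_0C$ is as well: its $i$-th row equals $(E_0)_{iQ}$ times the last row of $C$, and $(E_0)_{iQ}=0$ for $i<Q$ because the last column of $E_0$ is the last column of the block $e_p\id_{\mu_p}+J_{\mu_p,-}$, which is $e_p$ times the $Q$-th coordinate vector (the subdiagonal of $J_{\mu_p,-}$ only occupies columns $1,\dots,\mu_p-1$). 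Hence $h_0-E_0$ is supported on the last row, which is the claimed shape; one then defines $(\gamma_1,\dots,\gamma_Q)$ to be that row.

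For $h_\infty^{-1}=h_0 E_1$ I would write $E_1=\id_Q+D$ with $D$ supported on the last column (entries $c'_1,\dots,c'_{Q-1}$ in rows $1,\dots,Q-1$ and $(-1)^{n-1}-1$ in row $Q$); then $h_\infty^{-1}=h_0+h_0D$, and $h_0D$ is supported on the last column because $D$ is. So $h_\infty^{-1}-h_0$ is supported on the last column, which is the claimed shape, and one sets $(\delta_1,\dots,\delta_Q)^T$ to be that column. Matching $h_1=E_1$ against the asserted matrix requires nothing beyond renaming $g_i=c'_i$ and noting that $(-1)^{n-1}$ equals $1$ for $n$ odd and $-1$ for $n$ even, exactly as in the two cases above.

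There is no real obstacle; this is the ``straightforward calculation'' already announced, and the only hazards are bookkeeping ones: that monodromies act on the row vector of solutions from the right (so a change of basis conjugates, which is why $E_0$ becomes $L_1 E_0 L_1^{-1}$ but $E_1$ needs no conjugation, $\Y$ being the basis adapted to $z=1$), that $h_\infty^{-1}$ is $h_0 h_1$ and not $h_1 h_0$, and the identification of the last column of $E_0$ used in the second paragraph.
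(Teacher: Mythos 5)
Your proposal is correct and follows exactly the route the paper intends: the paper records $h_0=L_1E_0L_1^{-1}$ and $h_1=E_1$ just before the proposition and then labels the rest ``a straightforward calculation,'' which is precisely the computation you carry out (writing $L_1=\id_Q+C$ with $C^2=0$, checking that $E_0C$ is supported on the last row because the last column of $E_0$ is $e_p$ times the last coordinate vector, and using $h_\infty^{-1}=h_0h_1=h_0+h_0D$ with $D$ supported on the last column). The bookkeeping points you flag (right action, conjugation only for the basis adapted to $z=1$, and the order $h_0h_1$) are all handled consistently with the paper's conventions.
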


\begin{lemma} \label{lm:B}
Let $v = (v_1, \dots, v_Q)^T$ be a column vector
and define a $Q \times Q$ matrix by
$$
 T = (v, h_0 \cdot v, \dots, h_0^{Q-1} \cdot v).
$$
Then one has
$$
 \det T
  = \pm \prod_{1 \le \alpha < \beta \le p}
         (e_\alpha - e_\beta)^{\mu_\alpha \cdot \mu_\beta}
     \cdot \prod_{\alpha=1}^p (v_{\sigma_{\alpha-1} + 1})^{\mu_\alpha}.
$$
\end{lemma}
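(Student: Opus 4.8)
The plan is to reduce the computation of $\det T$ to that of a confluent Vandermonde determinant, by exploiting the conjugacy $h_0 = L_1 E_0 L_1^{-1}$. This conjugacy follows from $\X(z) = \Y(z) L_1$ (Lemma~\ref{lm:A'}) together with the fact that the monodromy of $\X(z)$ around $z = 0$ is $\X(z) \mapsto \X(z) E_0$ (Corollary~\ref{cr:z0}). First I would set $w = L_1^{-1} v$. Since $h_0^i v = (L_1 E_0 L_1^{-1})^i v = L_1 E_0^i w$, one has $T = L_1 \cdot K$ with $K = (w, E_0 w, \dots, E_0^{Q-1} w)$, and because $L_1$ is lower unitriangular (it equals the identity outside its last row) we get $\det T = \det K$. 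Moreover $L_1^{-1}$ is again the identity outside its last row, so $w_j = v_j$ for $j = 1, \dots, Q-1$; and since $\sigma_{\alpha-1} + 1 \le \sigma_{p-1} + 1 = Q - \mu_p + 1 \le Q - 1$ (using $\mu_p = N+1 \ge 2$), we have $w_{\sigma_{\alpha-1}+1} = v_{\sigma_{\alpha-1}+1}$ for every $\alpha = 1, \dots, p$. It therefore suffices to evaluate $\det K$ in terms of $w$.

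Next I would use that $E_0$ is block diagonal with $\alpha$-th block $e_\alpha \id_{\mu_\alpha} + J_{\mu_\alpha,-}$ and that $J_{\mu_\alpha,-}$ is the down-shift. Expanding $(e_\alpha \id + J_{\mu_\alpha,-})^i = \sum_m \binom{i}{m} e_\alpha^{\,i-m} J_{\mu_\alpha,-}^m$ shows that the entry of $K$ in the $\ell$-th row of the $\alpha$-th block and the $i$-th column is $\sum_{m=0}^{\ell-1} \binom{i}{m} e_\alpha^{\,i-m}\, w_{\sigma_{\alpha-1}+\ell-m}$, for $\ell = 1, \dots, \mu_\alpha$ and $i = 0, \dots, Q-1$. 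Reading $\binom{i}{m} e_\alpha^{\,i-m}$ as the $(m+1)$-th row of a confluent Vandermonde block attached to the node $e_\alpha$, this exhibits $K$ as a product $K = B \cdot V$, where $V$ is the confluent Vandermonde matrix whose entry in the row indexed by $(\alpha,\ell)$ and column indexed by $i$ is $\binom{i}{\ell-1} e_\alpha^{\,i-\ell+1}$, and $B = \mathrm{diag}(B_1, \dots, B_p)$ is block diagonal with $B_\alpha$ the $\mu_\alpha \times \mu_\alpha$ lower triangular Toeplitz matrix whose $(\ell,k)$-entry is $w_{\sigma_{\alpha-1}+\ell-k+1}$; in particular the diagonal of $B_\alpha$ is constant, equal to $w_{\sigma_{\alpha-1}+1}$. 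Hence $\det B = \prod_{\alpha=1}^p (w_{\sigma_{\alpha-1}+1})^{\mu_\alpha}$, while $\det V = \pm \prod_{1 \le \alpha < \beta \le p} (e_\alpha - e_\beta)^{\mu_\alpha \mu_\beta}$ by the classical confluent Vandermonde determinant identity (with this normalisation the factorials drop out), which is nonzero since the $e_\alpha$ are distinct. Multiplying and substituting $w_{\sigma_{\alpha-1}+1} = v_{\sigma_{\alpha-1}+1}$ yields the assertion.

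The substantive point is the reduction to $E_0$ via $h_0 = L_1 E_0 L_1^{-1}$ in the first paragraph, which is what lets us ignore the unknown last row of $h_0$ altogether. Everything in the second paragraph is routine: the factorisation $K = B V$ is just the binomial expansion above combined with $J_{\mu_\alpha,-}^m$ being the $m$-fold down-shift, and the evaluation $\det V = \pm\prod_{\alpha<\beta}(e_\alpha - e_\beta)^{\mu_\alpha\mu_\beta}$ is classical (alternatively it follows by an elementary induction on $\sum_\alpha \mu_\alpha$, subtracting columns). If the normalised confluent Vandermonde identity is felt to need more than a citation, the slightly fiddly part would be that induction; but it is standard and causes no real obstacle.
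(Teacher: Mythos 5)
Your proof is correct, and it reaches the same confluent Vandermonde computation as the paper by a somewhat different route. The paper never conjugates $h_0$ into Jordan form: it right-multiplies $T$ by the unipotent block matrices $T(\alpha,j)$ built from $\id_{j+1}-e_\alpha J_{j+1,+}$, i.e.\ it repeatedly performs the column operations $c_k \mapsto c_k - e_\alpha c_{k-1}$ so that the columns become vectors of the form $\prod_\beta (h_0-e_\beta)^{k_\beta}v$ and the matrix collapses to a lower-triangular one whose diagonal entries are read off directly; this is essentially the ``elementary induction, subtracting columns'' that you mention only as a fallback, carried out on $T$ itself. You instead first use $h_0 = L_1 E_0 L_1^{-1}$ (which is exactly what Corollary~\ref{cr:z0} together with Lemma~\ref{lm:A'} provides) to replace $h_0$ by its Jordan form $E_0$ at the cost of replacing $v$ by $w=L_1^{-1}v$, check that $w$ and $v$ agree in the relevant coordinates because $L_1$ is the identity off its last row and $\sigma_{p-1}+1\le Q-1$, and then factor $K=BV$ with $B$ block lower-triangular Toeplitz and $V$ the classical confluent Vandermonde matrix. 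Your version has the merit of making explicit why the unknown last row of $h_0$ (the $\gamma_i$) cannot enter the answer, and of isolating the dependence on $v$ in the triangular Toeplitz factor, at the price of invoking the normalised confluent Vandermonde identity as an external fact; the paper's version is self-contained but leaves the role of the last-row perturbation implicit. Both arguments are complete.
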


\begin{proof}
Let $T(\alpha, j) \in \SL_Q(\bC)$ be the block diagonal matrix
defined by
$$
 T(\alpha, j) =
 \begin{pmatrix}
   \id_{Q-j-1} & 0 \\
   0 & \id_{j+1} - e_{\alpha} \cdot J_{j+1, +}
 \end{pmatrix}.
$$
Then
\begin{align*}
 T \cdot T(&1, Q - 1) \cdot T(1, Q - 2) \cdot \cdots \cdot T(1, Q - \mu_1) \\
   & \cdot T(2, Q - \mu_1 - 1) \cdot \cdots \cdot T(2, Q - \mu_1 - \mu_2) \\
   & \cdot T(p, Q - \sigma_{p-1} - 1) \cdot \cdots \cdot T(p, 1) \\
\end{align*}
is a lower-triangular matrix
whose $i$-th diagonal component
for $\sigma_{\alpha - 1} < i \le \sigma_\alpha$ is given by
$$
 \prod_{\beta < \alpha}
  (e_\alpha - e_\beta)^{\mu_\beta}
 \cdot v_{\sigma_{\tiny \alpha-1}+1}.
$$
\end{proof}

\begin{corollary}
$v = (v_1, \dots, v_Q)^T$ is a cyclic vector with respect to $h_0$
if and only if the condition
\begin{equation} \label{eq:cyclic}
 \prod_{\alpha=1}^p v_{{\sigma_{\alpha-1}} + 1} \ne 0  
\end{equation}
is satisfied.
\end{corollary}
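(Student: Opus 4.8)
The plan is to read the corollary off Lemma \ref{lm:B} with essentially no extra work. First I would unwind the definition: by definition $v$ is cyclic with respect to $h_0$ exactly when the vectors $v, h_0 v, \dots, h_0^{Q-1} v$ form a basis of $\bC^Q$, that is, when the $Q \times Q$ matrix
$$
 T = (v, h_0 \cdot v, \dots, h_0^{Q-1} \cdot v)
$$
is invertible, i.e.\ when $\det T \ne 0$. So the task reduces to determining when the expression for $\det T$ provided by Lemma \ref{lm:B} is nonzero.

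Next I would invoke Lemma \ref{lm:B}, which gives
$$
 \det T = \pm \prod_{1 \le \alpha < \beta \le p}
          (e_\alpha - e_\beta)^{\mu_\alpha \cdot \mu_\beta}
      \cdot \prod_{\alpha=1}^p (v_{\sigma_{\alpha-1}+1})^{\mu_\alpha} ,
$$
and observe that the first product never vanishes: the exponents $1 > \rho_1 > \rho_2 > \dots > \rho_p = 0$ are pairwise distinct elements of the interval $[0,1)$, on which the map $\rho \mapsto \exp(2 \pi \sqrt{-1}\, \rho)$ is injective, so the numbers $e_\alpha = \exp(2 \pi \sqrt{-1}\, \rho_\alpha)$ are pairwise distinct and hence $e_\alpha - e_\beta \ne 0$ for all $\alpha \ne \beta$. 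Since moreover each multiplicity $\mu_\alpha$ is a positive integer, the factor $\prod_{1 \le \alpha < \beta \le p}(e_\alpha - e_\beta)^{\mu_\alpha \mu_\beta}$ is a fixed nonzero constant.

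Therefore $\det T \ne 0$ if and only if $\prod_{\alpha=1}^p (v_{\sigma_{\alpha-1}+1})^{\mu_\alpha} \ne 0$, and because $\mu_\alpha \ge 1$ for every $\alpha$, this holds if and only if $\prod_{\alpha=1}^p v_{\sigma_{\alpha-1}+1} \ne 0$, which is precisely \eqref{eq:cyclic}. This completes the argument. There is no genuine obstacle at this stage: all of the substance is already contained in Lemma \ref{lm:B}, whose proof carries out the block row-and-column reduction of $T$ into lower-triangular form with the displayed diagonal entries; the corollary merely records the elementary fact that the Vandermonde-type factor built from the distinct eigenvalues $e_\alpha$ of $h_0$ is never zero, so cyclicity is controlled entirely by the components $v_{\sigma_{\alpha-1}+1}$ of $v$.
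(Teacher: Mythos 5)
Your argument is correct and is exactly the route the paper intends: the corollary is stated as an immediate consequence of Lemma \ref{lm:B}, with the only observation needed being that the $e_\alpha = \exp(2\pi\sqrt{-1}\,\rho_\alpha)$ are pairwise distinct (since the $\rho_\alpha$ are distinct in $[0,1)$), so the Vandermonde-type factor is nonzero and $\det T \ne 0$ reduces to \eqref{eq:cyclic}.
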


\begin{lemma}
If $v \in \bC^Q$ satisfies
\begin{equation} \label{eq:i}
 h_\infty^{-i} \cdot v = h_0^i v,
  \qquad i = 1, 2, \dots, Q-1,
\end{equation}
then \eqref{eq:cyclic} holds.
\end{lemma}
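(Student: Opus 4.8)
The plan is to deduce from \eqref{eq:i} that $v$ is pinned down, up to scalar, inside a one-dimensional subspace of $\bC^Q$ all of whose nonzero elements are cyclic for $h_0$, and then to extract \eqref{eq:cyclic} from the determinant formula of Lemma~\ref{lm:B} (we may assume $v\neq 0$). Throughout I would use the identity $h_\infty^{-1} = h_0 h_1$, which follows at once from $h_0 h_1 h_\infty = 1$.

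First I would rewrite \eqref{eq:i} as $(h_0 h_1)^i v = h_0^i v$ for $i = 0, 1, \dots, Q-1$, and note that for every $j$ with $0 \le j \le Q-2$,
$$
 h_0 h_1 h_0^j v
  = h_0 h_1 (h_0 h_1)^j v
  = (h_0 h_1)^{j+1} v
  = h_0^{j+1} v
  = h_0 \cdot h_0^j v,
$$
so that $h_1 h_0^j v = h_0^j v$ for $j = 0, \dots, Q-2$. By the explicit form of $h_1$ obtained above, the matrix $h_1 - \id$ is zero except in its last column, which equals $(g_1, \dots, g_{Q-1}, (-1)^{n-1}-1)^T$; this column is nonzero, since otherwise all $g_i$ vanish and $n$ is odd, i.e. $h_1 = \id$, whereas $X_Q$ is singular at $z=1$ by Lemma~\ref{lm:A} (applied with $i = p$, so $\sigma_p = Q$) while $X_Q = Y_Q$ by the shape of the connection matrix $L_1$ of Lemma~\ref{lm:A'}. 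Hence $h_1 - \id$ has rank one with kernel $\{u \in \bC^Q : u_Q = 0\}$, and the displayed identities give $(h_0^j v)_Q = 0$ for $j = 0, 1, \dots, Q-2$; equivalently, $v$ is annihilated by the $Q-1$ covectors $e_Q^T h_0^j$, $j = 0, \dots, Q-2$.

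Next I would show that $e_Q^T$ is a \emph{cyclic} covector for $h_0$, i.e. that $e_Q^T, e_Q^T h_0, \dots, e_Q^T h_0^{Q-1}$ are linearly independent. By Corollary~\ref{cr:z0} and Lemma~\ref{lm:A'} we have $h_0 = L_1 E_0 L_1^{-1}$, and $e_Q^T L_1$ is the last row $(c_1, \dots, c_{Q-1}, 1)$ of $L_1$; since cyclicity of a covector is invariant under conjugation, it suffices to see that this row is cyclic for $E_0$. Its restriction to the $\alpha$-th block $e_\alpha \id_{\mu_\alpha} + J_{\mu_\alpha, -}$ has last entry $c_{\sigma_\alpha}$ (with $c_{\sigma_p} = 1$), which is nonzero by Lemma~\ref{lm:A'}; a direct computation with $J_{\mu_\alpha, -}$ shows that a row vector with nonzero last entry is cyclic for a single lower Jordan block, and since $e_1, \dots, e_p$ are pairwise distinct, a row vector is cyclic for the block-diagonal matrix $E_0$ precisely when each of its blocks is. This gives the claim.

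Finally, let $R$ be the matrix with rows $e_Q^T h_0^{i-1}$ for $i = 1, \dots, Q$, which is invertible by the previous step, and let $T = (v, h_0 v, \dots, h_0^{Q-1}v)$ as in Lemma~\ref{lm:B}. Then $(RT)_{ij} = e_Q^T h_0^{i+j-2}v$ depends only on $i+j$, it vanishes for $i+j\le Q$ by the second paragraph, and on the anti-diagonal $i+j = Q+1$ it equals $m := e_Q^T h_0^{Q-1}v$; moreover $m\neq 0$, since otherwise $v$ is killed by all of $e_Q^T h_0^j$, $j = 0, \dots, Q-1$, hence $v = 0$. Thus $RT$ is anti-triangular with nonzero anti-diagonal, so $\det T = \pm m^Q \neq 0$; comparing with the formula of Lemma~\ref{lm:B} and using that the $e_\alpha$ are distinct, we conclude $\prod_{\alpha=1}^p v_{\sigma_{\alpha-1}+1}\neq 0$, which is \eqref{eq:cyclic}. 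I expect the third paragraph to be the crux: the essential move is to transfer the hypothesis on $v$ into the cyclicity of the fixed covector $e_Q^T$, and that cyclicity is exactly where the non-vanishing $c_{\sigma_\alpha}\neq 0$ of Lemma~\ref{lm:A'} and the Jordan structure of $E_0$ enter; the exclusion of $h_1 = \id$ for odd $n$ and the anti-triangular determinant computation are routine once this is set up.
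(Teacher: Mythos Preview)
Your proof is correct. Both your argument and the paper's rest on the same ingredients --- the kernel of $h_1-\id$ is the hyperplane $\{u_Q=0\}$, the conjugation $h_0=L_1E_0L_1^{-1}$, and the non-vanishing $c_{\sigma_\alpha}\neq 0$ from Lemma~\ref{lm:A'} --- but you organize them differently. The paper writes the conditions \eqref{eq:i} (after extracting $v_Q=0$) as a system $\Sigma\cdot\bv=0$, row-reduces $\Sigma$ to an anti-triangular matrix $\Sigmatilde$ whose anti-diagonal entries are $\prod_{\beta>\alpha}(e_\alpha-e_\beta)^{\mu_\beta}c_{\sigma_\alpha}$, and then argues directly that $v_{\sigma_{\alpha-1}+1}=0$ forces $\bv=0$. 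You instead phrase the same conditions as $e_Q^Th_0^jv=0$ for $j=0,\dots,Q-2$, prove once and for all that $e_Q^T$ is a cyclic covector for $h_0$ (this is where $c_{\sigma_\alpha}\neq 0$ and the Jordan structure of $E_0$ enter, and it is equivalent to the paper's anti-diagonal computation), and then observe that the product $RT$ is a Hankel-type matrix, anti-triangular with constant nonzero anti-diagonal, whence $\det T\neq 0$ and \eqref{eq:cyclic} via Lemma~\ref{lm:B}. Your route has the advantage of separating the two steps cleanly and passing through the intermediate statement ``$v$ is cyclic for $h_0$'' rather than checking each coordinate $v_{\sigma_{\alpha-1}+1}$ individually; it also sidesteps some of the index bookkeeping in the paper's row-reduction. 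The paper's route, on the other hand, is slightly more self-contained in that it does not invoke Lemma~\ref{lm:B} a second time.
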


\begin{proof}
Since the kernel of $h_\infty^{-1} - h_0$ is
the orthogonal complement of the last coordinate vector
$
 \be_Q = (0, \dots, 0, 1) \in \bC^Q,
$
the equations \eqref{eq:i} for $v = (\bv, 0)$
where $\bv=(v_1, \cdots, v_{Q-1})$
can be rewritten as
$$
 \Sigma \cdot \bv = 0
$$
where $\Sigma$ is a $(Q - 1) \times (Q-1)$ matrix
whose $j$-th row vector is
the first $Q - 1$ components of the last row vector of $h_0^j$.
Define a block diagonal $(Q - 1) \times (Q-1)$ matrix by
$$
 S(\alpha, j)
  =
  \begin{pmatrix}
   \id_{Q-j-2} & 0 \\
   0 & S'
  \end{pmatrix}
$$
where $S' \in \SL_{j+1}(\bC)$ is given by
$$
 S'
  =
  \begin{pmatrix}
   1 & 0 & \cdots & 0 & 0 \\
   -e_\alpha & 1 & \cdots & 0 & 0 \\
   \vdots & \vdots &\ddots & \vdots & \vdots \\
   0 & 0 & \cdots & 1 & 0 \\
   0 & 0 & \cdots & - e_\alpha & 1
  \end{pmatrix}.
$$
Then the components of the matrix
\begin{align*}
 \Sigmatilde
  &= S(1, 1) \cdots S(1, \mu_1 - 1)
     \cdot S(2, \mu_1) \cdots S(2, \sigma_2 - 1)
     \cdot S(3, \sigma_2) \cdots S(3, \sigma_3 - 1) \\
  & \qquad \cdots S(p, \sigma_{p-1}) \cdots S(p, \sigma_p - 2)
    \cdot \Sigma
\end{align*}
are zero below the anti-diagonal
(i.e., $\Sigmatilde_{i j} = 0$ if $i + j > Q$)
and the $i$-th anti-diagonal component
$
\Sigmatilde_{i, Q - i - 1}
$
for $\sigma_{\alpha - 1} < i \le \sigma_\alpha$ is given by
$$
 \prod_{\beta > \alpha}
  (e_\alpha - e_\beta)^{\mu_\beta} c_{\sigma_{\alpha}}.
$$
The $(Q - 1)$-st equation
$$
 \text{(const)} \cdot v_1
  + \prod_{\beta>1} (e_1 - e_\beta)^{\mu_\beta} c_{\mu_1} v_2
  = 0
$$
together with Lemma \ref{lm:A'} implies that $v_2 = 0$
if $v_1 = 0$.
By repeating this type of argument,
one shows that $v_1 = 0$ implies $\bv = 0$.
Moreover, one can run the same argument
by interchanging the role of $(v_1, e_1, c_{\mu_1})$
with $(v_{\sigma_{\alpha-1}+1}, e_\alpha, c_{\sigma_\alpha})$
to show that $v_{\sigma_{\alpha-1}+1} = 0$ implies $\bv = 0$.
Hence a non-trivial solution to \eqref{eq:i}
must satisfy \eqref{eq:cyclic}.
\end{proof}

This concludes the proof of Theorem \ref{th:monodromy-group}.

\section{Invariants of the hypergeometric group} \label{sc:ihg}

We prove the following
in this section:

\begin{proposition} \label{prop:ihg}
Let $\bq = (q_0, \dots, q_N)$ and $\bd = (d_1, \dots, d_r)$
be sequences of positive integers
such that $Q := \sum_{i=0}^N q_i = \sum_{k=1}^r d_r$.
Then the space of $Q \times Q$ matrices
invariant under the action
$$
 \Hqd \ni h
  : X \mapsto h \cdot X \cdot h^T
$$
is at most one-dimensional.
\end{proposition}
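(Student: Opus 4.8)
The plan is to reformulate the statement as an invariance property of a bilinear form and then reduce everything, via the companion‑matrix description of the generators of $\Hqd$, to a short computation in a cyclic basis -- in the spirit of the manipulations around \eqref{eq:i} in Section \ref{sc:mhg}.

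First I would observe that $X$ is invariant under $h\colon X\mapsto hXh^{T}$ for all $h\in\Hqd$ if and only if $h_{\infty}Xh_{\infty}^{T}=X$ and $h_{0}^{-1}X(h_{0}^{-1})^{T}=X$, i.e.\ if and only if the bilinear form $B(u,v)=u^{T}Xv$ is invariant under $h_{\infty}^{T}$ and $(h_{0}^{-1})^{T}$. By \eqref{eq:hinfty} and \eqref{eq:h0}, $h_{\infty}$ and $h_{0}^{-1}$ are companion matrices agreeing in all columns but the last, so $h_{\infty}-h_{0}^{-1}=v\,\be_{Q}^{T}$ with $v=(B_{Q}-A_{Q},\dots,B_{1}-A_{1})^{T}$ ($A_{i},B_{i}$ as in \eqref{eq:a}, \eqref{eq:b}); moreover $v\neq 0$, since $\prod_{k}(T^{d_{k}}-1)$ and $\prod_{\nu}(T^{q_{\nu}}-1)$ vanish at $T=1$ to orders $r$ and $N+1$ respectively and $r\le N$. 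Finally, being the transpose of a companion matrix, $h_{\infty}^{T}$ has $\be_{Q}$ as a cyclic vector.

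The heart of the argument would then be three steps. (1) Put $f_{j}=(h_{\infty}^{T})^{\,j-1}\be_{Q}$, a basis of $\bC^{Q}$; invariance under $h_{\infty}^{T}$ gives $B(f_{i},f_{j})=B(f_{i-1},f_{j-1})$ for $i,j\ge 2$, so the Gram matrix $G=\bigl(B(f_{i},f_{j})\bigr)$ is Toeplitz and is therefore determined by its first row and first column. (2) Writing $(h_{0}^{-1})^{T}=h_{\infty}^{T}-\be_{Q}v^{T}$ and expanding the $(h_{0}^{-1})^{T}$‑invariance, the leading term cancels and one is left with
$$
 (v^{T}w)\,B(h_{\infty}^{T}u,\be_{Q})+(v^{T}u)\,B(\be_{Q},h_{\infty}^{T}w)-(v^{T}u)(v^{T}w)\,s=0
$$
for all $u,w$, where $s=B(\be_{Q},\be_{Q})$; since $v\neq 0$, one side is proportional to $v^{T}w$ and the other to $v^{T}u$, which forces $B(h_{\infty}^{T}u,\be_{Q})=\lambda\,v^{T}u$ and $B(\be_{Q},h_{\infty}^{T}w)=(s-\lambda)\,v^{T}w$ for a scalar $\lambda$, so that $G_{i1}=\lambda\,v^{T}f_{i-1}$ ($i\ge2$), $G_{1j}=(s-\lambda)\,v^{T}f_{j-1}$ ($j\ge2$), $G_{11}=s$; thus $G$ depends on at most the two scalars $\lambda,s$. (3) Applying the two identities of (2) with $u=w=f_{Q}$ and using $h_{\infty}^{T}f_{Q}=(h_{\infty}^{T})^{Q}\be_{Q}=-\sum_{k=1}^{Q}A_{k}f_{Q-k+1}$ (Cayley--Hamilton for $h_{\infty}^{T}$), and substituting the values of $G_{Q-k+1,1}$ and $G_{1,Q-k+1}$ from (2), one gets $\lambda\kappa=-A_{Q}s$ and $(s-\lambda)\kappa=-A_{Q}s$ with $\kappa=\sum_{j=1}^{Q}A_{Q-j}\,v^{T}f_{j}$; subtracting yields $(2\lambda-s)\kappa=0$, and since $A_{Q}=(-1)^{r}\neq0$ this gives $s=0$ when $\kappa=0$ and $s=2\lambda$ when $\kappa\neq0$. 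In either case $G$, hence $B$, hence $X$, depends only on $\lambda$, so the space of invariant matrices is at most one‑dimensional. Note that only consequences of the two defining relations were used; since we are after an upper bound this is harmless, the non‑triviality of the space being the content of Section \ref{sc:CYci}.

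The part requiring the most care will be the index bookkeeping in step (3), which is the analogue of Lemma \ref{lm:B} and the manipulations around \eqref{eq:i}; one should also check the point that $v\neq0$, which is exactly where the hypothesis that $Y$ has positive codimension ($r\le N$) enters. I would emphasise that one must \emph{not} try to argue from coprimality of $\prod_{k}(T^{d_{k}}-1)$ and $\prod_{\nu}(T^{q_{\nu}}-1)$ -- these share large common factors, both being highly divisible by $T-1$ -- and that the whole point of passing to the cyclic basis of $h_{\infty}^{T}$ is precisely to sidestep this.
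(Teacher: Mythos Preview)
Your argument is correct and reaches the same conclusion as the paper, but the execution differs enough to be worth noting. Both proofs exploit that $h_\infty^T$ has a cyclic vector, so an invariant bilinear form is determined by one row/column, and both then use the remaining generator to cut the parameters to one. The paper, however, works directly with $h_1=h_0^{-1}h_\infty^{-1}$: since $h_1$ differs from the identity only in its first column, the equation $h_1Xh_1^T=X$ read along the first column immediately gives $X_{i1}=-\tfrac12(h_1)_{i1}X_{11}$ when $n=N-r$ is even, and when $n$ is odd forces $X_{11}=0$ and then (using a second column) determines $X_{i1}$ in terms of a single entry $X_{1j}$. Your route via the rank-one identity $h_\infty-h_0^{-1}=v\,e_Q^T$, the functional equations $B(h_\infty^T u,e_Q)=\lambda\,v^Tu$ and $B(e_Q,h_\infty^T w)=(s-\lambda)\,v^Tw$, and the Cayley--Hamilton closure in step~(3) is more structural and never needs the explicit shape of $h_1$; your dichotomy $\kappa=0$ versus $\kappa\ne0$ plays the role of the paper's parity split on $n$. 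Both arguments implicitly rely on $v\ne0$, i.e.\ $\prod_k(T^{d_k}-1)\ne\prod_\nu(T^{q_\nu}-1)$; this is exactly the point where $r\le N$ enters, and the paper uses it in the same place (to guarantee the existence of a $j$ with $(h_1)_{j1}\ne0$ in the odd case).
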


\begin{proof}

Let $X$ be a $Q \times Q$ matrix
invariant under the hypergeometric group $\Hqd$,
so that
\begin{equation} \label{eq:h-invariance}
 h \cdot X \cdot h^T = X
\end{equation}
for any
$
 h \in \Hqd.
$
Let $e_1 = (1, 0, \dots, 0)^T$ be the first coordinate vector.
Since $\{ (h_\infty^T)^i e_1 \}_{i=0}^Q$ spans $\bC^{Q}$,
$X_{ij}$ is determined by the $\Hqd$-invariance
once we know $X_{i1}$ for $i=1, \dots, Q$.
Put
\begin{equation} \label{eq:h1}
 h_1 = h_0^{-1} \cdot h_\infty^{-1}
   = \begin{pmatrix}
      (-1)^{r} B_Q & 0 & \cdots & 0 & 0 \\
      (-1)^{r} (B_{Q-1} - A_{Q-1}) & 1 & \cdots & 0 & 0 \\
      \vdots & \vdots & \ddots & \vdots & \vdots \\
      (-1)^{r} (B_2 - A_2) & 0 & \cdots & 1 & 0 \\
      (-1)^{r} (B_1 - A_1) & 0 & \cdots & 0 & 1 \\
     \end{pmatrix}
  \in \Hqd
\end{equation}
and consider \eqref{eq:h-invariance} for $h = h_1$.
Since
\begin{align*}
 (h_1 \cdot X \cdot h_1^T)_{i1}
   &= \sum_{k,l=1}^Q (h_1)_{ik} X_{kl} (h_1)_{1l} \\
   &= \sum_{k,l=1}^Q (h_1)_{ik} X_{kl} (-1)^{r+N+1} \delta_{1l} \\
   &= \sum_{k=1}^Q (-1)^{N+r+1} (h_1)_{ik} X_{k1},
\end{align*}
the first column of \eqref{eq:h-invariance} reduces to
\begin{equation} \label{eq:h1-invariance}
 (-1)^{N+r+1}((h_1)_{i1} X_{11} + X_{i1}) = X_{i1}
\end{equation}
for $2 \le i \le Q$.
If $n = N - r$ is even,
then \eqref{eq:h1-invariance} implies
$$
 X_{i1} = - \frac{1}{2} (h_1)_{i1} X_{11},
$$
so that the space of $\Hqd$-invariants
is at most one-dimensional.
If $N + r$ is odd,
then \eqref{eq:h1-invariance} gives
$
 X_{11} = 0.
$
Fix $j \ne 1$ such that
$(h_1)_{j1} = (-1)^r (B_{Q-j+1} - A_{Q-j+1}) \ne 0$.
Since
\begin{align*}
 (h_1 \cdot X \cdot h_1^T)_{ij}
   &= \sum_{k,l=1}^Q (h_1)_{ik} X_{kl} (h_1)_{jl} \\ 
   &= \sum_{k=1}^Q (h_1)_{ik} ( X_{k1} (h_1)_{j1} + X_{kj} (h_1)_{jj}) \\ 
   &= \sum_{k=1}^Q (h_1)_{ik} ( X_{k1} (h_1)_{j1} + X_{kj} ) \\ 
   &= (h_1)_{i1} ( X_{11} (h_1)_{j1} + X_{1j} )
        + ( X_{i1} (h_1)_{j1} + X_{ij} ) \\ 
   &= (h_1)_{i1} X_{1j}
        + X_{i1} (h_1)_{j1} + X_{ij},
\end{align*}
the $j$-th column of \eqref{eq:h-invariance} gives
$$
 (h_1)_{i1} X_{1j} + (h_1)_{j1} X_{i1} = 0
$$
for $2 \le i \le Q$.
Since $(h_1)_{j1} \ne 0$,
one obtains
$$
 X_{i1} = - \frac{(h_1)_{1i}}{(h_1)_{j1}} X_{1j}
$$
for $2 \le i \le Q$,
so that the space of $H$-invariants
is at most one-dimensional
also in this case.
\end{proof}

\section{Coherent sheaves on Calabi-Yau complete intersections
in weighted projective spaces} \label{sc:CYci}

We prove the $\Hqd$-invariance
of the Gram matrix in Theorem \ref{th:invariant}
in this section.
The proof is closely related
to the discussion
of Golyshev \cite[\S 1]{Golyshev_RRV},
although the use of the right dual collection
$(\scFtilde_i)_{i=1}^Q$
seems to be new.


Let $Y$ be a smooth complete intersection
of degree $(d_1, \ldots, d_r)$
in the weighted projective space
$\bP = \bP(q_0, \dots, q_N)$.
We use the Koszul resolution
\begin{align}
\begin{split} \label{eq:KoszulY}
 0 & \lto \scO_\bP(-d_1 - \dots - d_r)
     \lto \bigoplus_{i=1}^r
      \scO_\bP( - d_1 - \dots - \widehat{d_i} - \dots - d_r ) \\
   & \quad \qquad \lto \cdots
     \lto \bigoplus_{1 \le i < j \le r}
      \scO_\bP(-d_i - d_j)
     \lto \bigoplus_{i=1}^r \scO_\bP(-d_i)
     \lto \scO_\bP
     \lto \iota_* \scO_Y
     \lto 0
\end{split}
\end{align}
of the structure sheaf $\scO_Y$ of $Y$
to compute the derived restriction
$$
 \bL \iota^*(-) := (-) \Lotimes_{\iota^{-1}\scO_\bP} \scO_Y
  : D^b \coh \bP \to D^b \coh Y,
$$
where $\iota : Y \hookrightarrow \bP$ is the inclusion.

Let $(\scEtilde_i)_{i=1}^Q$ be the full strong exceptional collection
on $D^b \coh \bP$
given as
$$
 (\scEtilde_1, \dots, \scEtilde_Q) = (\scO, \dots, \scO(Q-1)),
$$
and $(\scFtilde_1, \dots, \scFtilde_Q)$ be
its right dual exceptional collection
characterized by the condition
$$
 \Ext^k(\scEtilde_{Q-i+1}, \scFtilde_j) = 
  \begin{cases}
   \bC & i=j, \text{ and } k=0, \\
   0 & \text{otherwise}.
  \end{cases}
$$
Note that
$
 \scFtilde_1 = \scO_\bP(-1)[N]
$
and
$
 \scFtilde_Q = \scE_1 = \scO_\bP.
$
The Euler form on the Grothendieck group $K(\bP)$
defined by \eqref{eq:Euler}
is neither symmetric nor anti-symmetric,
whereas that on $K(Y)$ is either symmetric or anti-symmetric
depending on the dimension of $Y$.
The bases $\{ [\scEtilde_i] \}_{i=1}^Q$
and $\{ [\scFtilde_i] \}_{i=1}^Q$ of $K(\bP)$
are dual to each other in the sense that
\begin{align} \label{eq:Euler_form}
 \chi(\scEtilde_{Q-i+1}, \scFtilde_j) = \delta_{ij}.
\end{align}
We will write the derived restrictions
of $\scEtilde_i$ and $\scFtilde_i$
to $Y$ as
$\scEbar_i$ and $\scFbar_i$ respectively.
Unlike $\{ [\scEtilde_i] \}_{i=1}^Q$
and $\{ [\scFtilde_i] \}_{i=1}^Q$,
$\{ [\scEbar_i] \}_{i=1}^Q$ and
$\{ [\scFbar_i] \}_{i=1}^Q$ are not bases of $K(Y)$,
and their images in the numerical Grothendieck group
are linearly dependent.
Put
\begin{align*}
\Xbar_{ij} &= \chi([\scFbar_i], [\scFbar_j])
\end{align*}
and let $(a_{ij})_{i,j=1}^Q$ be
the transformation matrix
between two bases $\{ [\scEtilde_i] \}_{i=1}^Q$
and $\{ [\scFtilde_i] \}_{i=1}^Q$
so that
$$
 [\scFtilde_i] = \sum_{j=1}^Q [\scEtilde_j] a_{ji}.
$$
%
%

We prove the following in this section:

\begin{proposition} \label{prop:Xbar-ihg}
$\Xbar$ is an invariant of the hypergeometric group
$\Hqd$.
\end{proposition}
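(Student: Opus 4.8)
The plan is to identify the Gram matrix $\Xbar$ with a matrix built out of the monodromy operators $h_0, h_\infty$ of the hypergeometric system, and then verify invariance directly from the defining relations of $\Hqd$. The bridge between the algebraic geometry and the monodromy is the classical dictionary (going back to Kontsevich, and used by Horja and Golyshev) between the exceptional collection $(\scEtilde_i)$ on $\bP$ and the local monodromy at $0$, and between Serre duality / tensoring by $\scO(1)$ and the monodromy at $\infty$.

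First I would record the $K$-theoretic input. Tensoring by $\scO_\bP(1)$ is an autoequivalence of $D^b\coh\bP$ whose action on $K(\bP)$, written in the basis $\{[\scEtilde_i]\}$, is essentially the companion matrix of the polynomial $\prod_{\nu}(T^{q_\nu}-1)$ in \eqref{eq:b}; this is where $h_0$ (equivalently $h_0^{-1}$ in \eqref{eq:h0}) enters, since the monodromy at the origin has characteristic polynomial \eqref{eq:b}. Dually, the same autoequivalence acts on $\{[\scFtilde_i]\}$, and the Koszul resolution \eqref{eq:KoszulY} shows that the derived restriction $\bL\iota^*$ sends $[\scEtilde_i]$ to $\prod_k(1-\scO(d_k)^\vee)\cdot[\scEtilde_i]$ in $K(\bP)$, so the classes $[\scEbar_i],[\scFbar_i]$ in $K(Y)$ are governed by $\prod_k(T^{d_k}-1)$, i.e.\ by the matrices $A_j$ of \eqref{eq:a} and the monodromy at infinity $h_\infty$. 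The second ingredient is the Euler pairing on $Y$: by Serre duality on the Calabi-Yau $Y$, $\chi(\scF,\scG)=(-1)^n\chi(\scG,\scF)$, and the pairing on $K(\bP)$ restricted through $\bL\iota^*$ equals $\chi_\bP(-,\; \omega\otimes-)$ up to the Koszul twist; expanding $\chi_\bP$ in the dual bases $\{[\scEtilde_i]\},\{[\scFtilde_i]\}$ using \eqref{eq:Euler_form} turns everything into explicit matrices in the integers $A_j,B_j$.

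The key steps, in order, are: (1) using \eqref{eq:KoszulY} and \eqref{eq:Euler_form}, express $\Xbar_{ij}=\chi_Y(\scFbar_i,\scFbar_j)$ as $\chi_\bP\big(\scFtilde_i,\;(\textstyle\bigotimes_k(\scO\ominus\scO(-d_k)))\otimes\scFtilde_j\big)$, and rewrite this as a bilinear expression $v_i^T\, C\, v_j$ where $v_i$ is the coordinate vector of $[\scFtilde_i]$ in the $\scEtilde$-basis (i.e.\ the columns of $(a_{ij})$) and $C$ encodes the Euler form together with the Koszul operator; (2) recognize $C$ (or its symmetrization) as the matrix $\Xbar$ up to the change of basis $(a_{ij})$; (3) translate the statement $h\cdot\Xbar\cdot h^T=\Xbar$ for the generators $h=h_0$ and $h=h_\infty$ of $\Hqd$ into identities among the companion matrices and the polynomials \eqref{eq:a}, \eqref{eq:b}; and (4) verify those identities. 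For $h_0$ this amounts to $\scO(1)$-twist–equivariance of the Euler pairing on $\bP$ composed with the restriction, which is automatic since twisting is an autoequivalence; for $h_\infty$ it is the compatibility of Serre duality on $Y$ with the Koszul twist, i.e.\ the fact that $\omega_Y$ corresponds under $\bL\iota^*$ to $\scO_\bP(Q-d_1-\cdots-d_r)=\scO_\bP$ by the Calabi-Yau condition $Q=\sum d_k$.

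The main obstacle I expect is step (3)–(4) for the generator at infinity: one must match the transpose that appears in the $\Hqd$-action, $X\mapsto hXh^T$, with the \emph{asymmetry} of the Euler form on $K(\bP)$ and the sign $(-1)^n$ from Serre duality on $Y$, and check that the companion-matrix form of $h_\infty$ in \eqref{eq:hinfty} (built from $\prod_k(T^{d_k}-1)$) interacts correctly with the change-of-basis matrix $(a_{ij})$. Concretely, the delicate point is to show that conjugating the Koszul operator $\prod_k(1-S^{d_k})$ (where $S$ is the shift/companion matrix coming from $\scO(1)$) by the $\scEtilde\!\leftrightarrow\!\scFtilde$ transition matrix $(a_{ij})$ reproduces $h_\infty$ up to the symmetry of $\Xbar$; once this bookkeeping is set up, the invariance follows formally, and then Proposition~\ref{prop:ihg} upgrades "invariant" to "spans the one-dimensional invariant space," completing Theorem~\ref{th:invariant}.
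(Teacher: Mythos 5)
Your framework --- the Koszul resolution, the dual bases $\{[\scEtilde_i]\},\{[\scFtilde_i]\}$, and the reduction of everything to integer identities in the $A_j,B_j$ --- is the right one, and your treatment of the generator $h_0$ is essentially the paper's Lemma~\ref{lm:hinfty}: tensoring by $\scO_Y(-1)$ is an autoequivalence acting on $\{[\scFbar_i]\}$ by $h_0$, so $h_0\cdot\Xbar\cdot h_0^T=\Xbar$ is automatic once one knows autoequivalences preserve $\chi_Y$ (Lemma~\ref{lm:autoeq_isometry}). The gap is in your second generator. You claim that $h_\infty$-invariance reduces to ``compatibility of Serre duality on $Y$ with the Koszul twist,'' i.e.\ to $\omega_Y\cong\scO_Y$. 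But Serre duality only gives the symmetry $\Xbar^T=(-1)^n\Xbar$, and the Koszul resolution only computes the entries of $\Xbar$; neither fact, nor their combination with $h_0$-invariance, forces $h_\infty\Xbar h_\infty^T=\Xbar$. Your proposed mechanism --- that conjugating the Koszul operator $\prod_k(1-S^{-d_k})$ by the transition matrix $(a_{ij})$ ``reproduces $h_\infty$'' --- cannot work as stated: that operator vanishes to order $r$ at $x=1$, which is a root of $\prod_\nu(x^{q_\nu}-1)$, so it is not invertible and no conjugate of it equals the invertible companion matrix \eqref{eq:hinfty}; the two are related only through the polynomial $\prod_k(T^{d_k}-1)$, which is far too weak. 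Even the preliminary statement that $h_\infty$ acts on $\{[\scFbar_i]\}$ as the shift $[\scFbar_i]\mapsto[\scFbar_{i-1}]$ requires identifying the image of $[\scFbar_1]$ under that shift, and that is exactly where the missing geometry lives.

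The missing idea is the pseudo-reflection at $z=\lambda$. Set $h_1=h_0^{-1}h_\infty^{-1}$ as in \eqref{eq:h1}; given $h_0$-invariance, $h_\infty$-invariance is equivalent to $h_1\Xbar h_1^T=\Xbar$. The paper realizes $h_1$ as the dual spherical twist $T^\vee_{\scFbar_1}$ along $\scFbar_1=\scO_\bP(-1)[N]|_Y$ (Lemma~\ref{lm:hone}), which acts on $K(Y)$ by $[\scF]\mapsto[\scF]-\chi(\scF,\scFbar_1)[\scFbar_1]$, so invariance is again automatic from Lemma~\ref{lm:autoeq_isometry}. What must be checked is the matrix identity $(h_1)_{ij}=\delta_{ij}-\Xbar_{i1}\delta_{j1}$ of \eqref{eq:hX}, i.e.\ that the nontrivial column $(-1)^r(B_{Q-i+1}-A_{Q-i+1})$ of $h_1$ equals $-\chi(\scFbar_i,\scFbar_1)$, together with $\Xbar_{11}=1+(-1)^n$. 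That computation uses precisely the ingredients you list (the resolution \eqref{eq:KoszulY}, the duality \eqref{eq:Euler_form}, Serre duality on $Y$), so your steps (1)--(2) are salvageable; but without introducing the reflection $h_1$ and the spherical object $\scFbar_1$, your step (4) for the generator at infinity does not close. Put linear-algebraically: a transvection $I-v e_1^T$ preserves a $(-1)^n$-symmetric form $\Xbar$ when $v=\Xbar e_1$ and $\Xbar_{11}=1+(-1)^n$, and it is this characterization --- not Serre duality alone --- that makes the second generator work.
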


We divide the proof into three steps.

\begin{lemma} \label{lm:autoeq_isometry}
Let $\Phi$ be an autoequivalence of $D^b \coh Y$
such that its action on $\{ [\scFbar_i] \}_{i=1}^Q$
is given by
$$
 [\scFbar_i] \mapsto \sum_{j=1}^Q h_{ij} [\scFbar_j].
$$
Then $\Xbar$ is invariant under the action of
$h = (h_{ij})_{i,j=1}^Q$;
$$
 \Xbar = h \cdot \Xbar \cdot h^T.
$$
\end{lemma}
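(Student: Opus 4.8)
The plan is to use the fundamental fact that any exact autoequivalence $\Phi$ of $D^b\coh Y$ acts on the Grothendieck group $K(Y)$ as an isometry with respect to the Euler form, since $\Phi$ induces natural isomorphisms $\Ext^k(\scF, \scG) \xrightarrow{\sim} \Ext^k(\Phi\scF, \Phi\scG)$ for all objects $\scF, \scG$ and all $k$, whence $\chi(\scF, \scG) = \chi(\Phi\scF, \Phi\scG)$ after passing to classes in $K(Y)$. So the only content is to transport this identity through the expansion of the classes $[\scFbar_i]$.

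First I would record the hypothesis as the statement that, in $K(Y)$,
$$
 \Phi([\scFbar_i]) = \sum_{j=1}^Q h_{ij}\,[\scFbar_j], \qquad i = 1, \dots, Q.
$$
Then I would compute, for each pair $(i,k)$,
$$
 \Xbar_{ik}
  = \chi([\scFbar_i], [\scFbar_k])
  = \chi\bigl(\Phi([\scFbar_i]), \Phi([\scFbar_k])\bigr)
  = \chi\Bigl( \sum_{j=1}^Q h_{ij}[\scFbar_j], \ \sum_{l=1}^Q h_{kl}[\scFbar_l] \Bigr)
  = \sum_{j,l=1}^Q h_{ij}\, \chi([\scFbar_j], [\scFbar_l])\, h_{kl}
  = \sum_{j,l=1}^Q h_{ij}\, \Xbar_{jl}\, (h^T)_{lk},
$$
using bilinearity of $\chi$ in the middle step. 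This is precisely the matrix identity $\Xbar = h \cdot \Xbar \cdot h^T$, which is what we want.

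There is essentially no obstacle here: the lemma is a formal consequence of the invariance of the Euler form under autoequivalences together with the linearity of $\chi$, and the only thing to be careful about is bookkeeping of indices and transposes. The genuine work in this section lies elsewhere — namely in the two remaining steps, where one must exhibit autoequivalences $\Phi$ of $D^b\coh Y$ whose matrices $h$ on $\{[\scFbar_i]\}$ realize the generators $h_0$ and $h_\infty$ (or suitable conjugates) of the hypergeometric group $\Hqd$, so that Proposition \ref{prop:Xbar-ihg} follows by combining those realizations with Lemma \ref{lm:autoeq_isometry}. The natural candidates are the autoequivalence $-\otimes\scO_Y(1)$ (tensoring by the restriction of the hyperplane bundle), which acts on the exceptional collection essentially by a shift of indices and should produce something like $h_\infty$, and the spherical-twist type autoequivalence along the split-generator (or along $\scO_Y$) associated with the monodromy at a conifold point, which should produce $h_1$ or $h_0$; verifying that these matrices are the claimed ones and that they generate $\Hqd$ is where the real computation occurs.
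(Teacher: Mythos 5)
Your proof is correct and follows essentially the same route as the paper: both arguments rest on the fact that an exact autoequivalence induces an isometry of $K(Y)$ for the Euler form, and then expand $\chi(\Phi([\scFbar_i]),\Phi([\scFbar_j]))$ bilinearly to obtain $\Xbar = h \cdot \Xbar \cdot h^T$. Your closing remarks about where the real work lies (Lemmas \ref{lm:hinfty} and \ref{lm:hone}) also match the structure of the paper's Section \ref{sc:CYci}.
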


\begin{proof}
Since an autoequivalence $\Phi$ induces an isometry of $K(Y)$,
one has
\begin{align*}
 \Xbar_{i j}
  &= \chi([\scFbar_i], [\scFbar_j]) \\
  &= \chi([\Phi(\scFbar_i)], [\Phi(\scFbar_j)]) \\
  &= \sum_{k, l=1}^Q \h_{i k} \chi([\scFbar_k], [\scFbar_l]) \h_{j l} \\
  &= \sum_{k, l=1}^Q \h_{i k} \Xbar_{k l} \h_{j l}
\end{align*}
for any $1 \le i,j \le Q$.
\end{proof}

\begin{remark}
Since $\{ [\scFbar_i] \}_{i=1}^Q$ are not linearly independent,
the choice of $h$ in Lemma \ref{lm:autoeq_isometry}
is not unique.
\end{remark}

\begin{lemma} \label{lm:hinfty}
The action of the autoequivalence of $D^b \coh Y$
defined by the tensor product with $\scO_Y(-1)$
on $\{ \scFbar_i \}_{i=1}^Q$
is given by $h_0$;
$$
 [\scFbar_i \otimes \scO_Y(-1)]
   = \sum_{j=1}^Q (h_0)_{ij} [\scFbar_j].
$$
\end{lemma}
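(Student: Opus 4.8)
The goal is to compute the action of the autoequivalence $(-)\otimes\scO_Y(-1)$ on the classes $[\scFbar_i]$ and identify the resulting matrix with $h_0$, the monodromy around the origin. Since $[\scFbar_i]=[\bL\iota^*\scFtilde_i]$ and $\bL\iota^*$ commutes with tensoring by line bundles, it suffices to compute $\scFtilde_i\otimes\scO_\bP(-1)$ in $K(\bP)$ and then restrict. The plan is to work entirely in $K(\bP)$, using the two dual bases $\{[\scEtilde_i]\}$ and $\{[\scFtilde_i]\}$ and the transformation matrix $(a_{ij})$ introduced above. Tensoring with $\scO_\bP(-1)$ sends $\scEtilde_i=\scO_\bP(i-1)$ to $\scO_\bP(i-2)$, i.e. it sends $[\scEtilde_i]\mapsto[\scEtilde_{i-1}]$ for $i\ge 2$, while $[\scEtilde_1]=[\scO_\bP]\mapsto[\scO_\bP(-1)]$. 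So first I would express $[\scO_\bP(-1)]$ in the $\scEtilde$-basis: this is exactly the content of the characteristic polynomial \eqref{eq:b} of the monodromy at $0$, because the relation $\prod_{\nu}(T^{q_\nu}-1)=0$ in $K(\bP)$ (a consequence of the Koszul-type resolutions of $\scO_{\bP}$ from the hyperplanes) translates into a linear recursion expressing $[\scO_\bP(-1)]$ through $[\scO_\bP],\dots,[\scO_\bP(Q-1)]$ with coefficients $B_1,\dots,B_Q$.

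Next I would transport this shift operator to the $\scFtilde$-basis. Writing $[\scFtilde_i]=\sum_j[\scEtilde_j]a_{ji}$ and using that $[\scEtilde_i]\otimes\scO_\bP(-1)$ is the shift described above, one gets $[\scFtilde_i\otimes\scO_\bP(-1)]=\sum_j([\scEtilde_j]\otimes\scO_\bP(-1))a_{ji}$, which after re-expanding in the $\scFtilde$-basis yields a matrix that is conjugate (via $(a_{ij})$) to the companion matrix of $\prod_\nu(T^{q_\nu}-1)$. Since $h_0$ in \eqref{eq:h0} is exactly the companion-type matrix built from the $B_i$, the claim will follow once I check that the conjugation is by the correct change-of-basis and that the indexing conventions (the reversal $Q-i+1$ in \eqref{eq:Euler_form}, and the ordering of the $\scFtilde_i$) match the shape of $h_0^{-1}$ in \eqref{eq:h0}. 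Concretely I expect the cleanest route is: verify that in the $\scEtilde$-basis the operator $(-)\otimes\scO_\bP(-1)$ has matrix $\Theta$ with $\Theta[\scEtilde_i]=[\scEtilde_{i-1}]$ ($i\ge2$) and $\Theta[\scEtilde_1]=-\sum_{k=1}^Q B_k[\scEtilde_{Q-k+1}]$ (from \eqref{eq:b}), hence $\Theta$ is the inverse companion matrix; then note that passing to the dual collection replaces $\Theta$ by its transpose-inverse acting on coordinates, which is precisely $h_0$.

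The main obstacle is bookkeeping rather than conceptual: getting the linear-algebra dualization exactly right, including the sign $(-1)^r$ and the index reversals that already appear in \eqref{eq:h1} and \eqref{eq:Euler_form}, so that the matrix one obtains is literally $h_0$ as normalized in \eqref{eq:h0} and not merely conjugate or inverse to it. A secondary point to be careful about is that the resolution \eqref{eq:KoszulY} is for $\scO_Y$ whereas the relation I need for $[\scO_\bP(-1)]$ comes from the Koszul complexes associated to the $N+1$ coordinate sections of the line bundles $\scO_\bP(q_\nu)$; I would state the needed identity $\prod_{\nu=0}^N([\scO_\bP]-[\scO_\bP(-q_\nu)])=0$ in $K(\bP)$ explicitly and derive \eqref{eq:b} from it. Once these normalizations are pinned down, the identification $[\scFbar_i\otimes\scO_Y(-1)]=\sum_j(h_0)_{ij}[\scFbar_j]$ is immediate by applying $\bL\iota^*$, which is $K$-linear, to the identity in $K(\bP)$.
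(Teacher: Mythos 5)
Your plan is correct and follows essentially the same route as the paper's proof: reduce to $K(\bP)$ since tensoring by $\scO(-1)$ commutes with restriction, use the duality $\chi(\scEtilde_{Q-i+1},\scFtilde_j)=\delta_{ij}$ to convert the statement into the transpose-inverse (companion-matrix) action on the $\scEtilde$-basis, and obtain the nontrivial column from the Koszul relation \eqref{eq:KoszulP}, i.e.\ $\prod_\nu([\scO_\bP(q_\nu)]-1)=0$ in $K(\bP)$, which encodes the coefficients $B_k$ of \eqref{eq:b}. The remaining work is exactly the bookkeeping you identify (the reversal $i\mapsto Q-i+1$ and the verification that the resulting matrix is literally $h_0^{-1}$ as in \eqref{eq:h0}), which the paper carries out by checking the cases $i\ne Q$ and $i=Q$ separately.
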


\begin{proof}
Since tensor product with $\scO(-1)$
commutes with restriction,
it suffices to show
$$
 [\scFtilde_i \otimes \scO_\bP(-1)]
   = \sum_{j=1}^Q (h_0)_{ij} [\scFtilde_j].
$$
Since $\{ [\scEtilde_{Q-i+1}] \}_{i=1}^Q$ and
$\{ [\scFtilde_i] \}_{i=1}^Q$ are dual bases,
this is equivalent to
\begin{equation} \label{eq:h0-action}
 [\scEtilde_{Q-i+1} \otimes \scO(-1)]
   = \sum_{j=1}^Q [\scEtilde_{Q-j+1}] (h_0^{-1})_{ji}.
\end{equation}
Recall from \eqref{eq:h0} that
$$
 (h_0^{-1})_{ji}
  = \delta_{j, i+1} - \delta_{i, Q} B_{Q-j+1}.
$$
Since $\scEtilde_i = \scO(i-1)$,
Equation \eqref{eq:h0-action} for $i \ne Q$ gives
$$
 \scO_\bP(Q-i) \otimes \scO_\bP(-1) = \scO_\bP(Q-i-1),
$$
which is obvious.
Equation \eqref{eq:h0-action} for $i = Q$ gives
$$
 [\scO_\bP(-1)] + \sum_{j=1}^Q B_{Q-j+1} [\scO_\bP(Q-j)] = 0,
$$
which is $\scO_\bP(-1)$ times the relation
$$
 [\scO_\bP] + B_1 [\scO_\bP(1)]
  + \cdots + B_{Q-1} [\scO_\bP(Q-1)] + B_Q [\scO_\bP(Q)] = 0
$$
coming from the exact sequence
\begin{align} \label{eq:KoszulP}
 0 
 \to \scO_\bP
 \to \bigoplus_{i=0}^N \scO_\bP( q_i )
 \to \bigoplus_{0 \le i < j \le N} \scO_\bP(q_i + q_j)
 \to \cdots
 \to \scO_\bP \lb \sum_{i=0}^N q_i \rb
 \to 0
\end{align}
obtained by sheafifying the Koszul resolution
\begin{align*}
 0 \to \Lambda^N V \otimes \Sym^* V^*
   \to &\cdots \to \Lambda^2 V \otimes \Sym^* V^* \\
   &\to V \otimes \Sym^* V^* \to \Sym^* V^* \to \bC \to 0,
\end{align*}
where $V$ is a graded vector space such that
$\bP = \Proj(\Sym^* V^*)$.
\end{proof}

\begin{lemma} \label{lm:hone}
The action of the autoequivalence of $D^b \coh Y$
given by the dual spherical twist
$
 T_{\scFbar_1}^{\vee}
$
along $\scFbar_1$
is given on $\{ \scFbar_i \}_{i=1}^Q$ by $h_1$;
$$
 [T_{\scFbar_1}^{\vee}(\scFbar_i)]
   = \sum_{i=1}^Q (h_1)_{ij} [\scFbar_j].
$$
\end{lemma}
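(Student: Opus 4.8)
The plan is to recognise $\scFbar_1$ as a spherical object, to record the action of the dual spherical twist $T_{\scFbar_1}^\vee$ on the Grothendieck group as an explicit transvection, and then to match the coefficients of that transvection with the first column of $h_1$ via an Euler-characteristic computation carried out on the ambient weighted projective space $\bP$.

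First I would identify $\scFbar_1$. Since $\scFtilde_1 = \scO_\bP(-1)[N]$ is a shift of a line bundle, hence of a locally free sheaf, its derived restriction is simply $\scFbar_1 = \scO_Y(-1)[N]$. As $Y$ is a smooth Calabi--Yau $n$-fold, $\RHom_Y(\scO_Y(-1),\scO_Y(-1)) \cong \RGamma(Y,\scO_Y) \cong \bC \oplus \bC[-n]$, because $H^0(\scO_Y) = \bC$, $H^n(\scO_Y) = \bC$ (by $\omega_Y \cong \scO_Y$ and Serre duality), and $H^i(\scO_Y) = 0$ for $0 < i < n$ for a complete intersection. Hence $\scFbar_1$ is spherical, $T_{\scFbar_1}^\vee$ is a well-defined autoequivalence of $D^b \coh Y$, and its action on $K(Y)$ is the transvection $[F] \mapsto [F] - \chi_Y(F,\scFbar_1)[\scFbar_1]$; in particular $[T_{\scFbar_1}^\vee(\scFbar_i)] = [\scFbar_i] - \chi_Y(\scFbar_i,\scFbar_1)[\scFbar_1]$. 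Comparing with the shape of $h_1$ in \eqref{eq:h1}, the lemma becomes equivalent to the numerical identity
\begin{equation*}
 \chi_Y(\scFbar_i,\scFbar_1) = (-1)^r \lb A_{Q-i+1} - B_{Q-i+1} \rb, \qquad i = 1, \dots, Q,
\end{equation*}
the right-hand side for $i = 1$ being $1 + (-1)^n = \chi_Y(\scFbar_1,\scFbar_1)$.

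To prove this identity I would push the computation to $\bP$. By the adjunction $\bL\iota^* \dashv \iota_*$ and the projection formula, $\chi_Y(\bL\iota^*\scFtilde_i, \bL\iota^*\scFtilde_1) = \chi_\bP(\scFtilde_i, \scFtilde_1 \Lotimes \iota_*\scO_Y)$, while the Koszul resolution \eqref{eq:KoszulY} gives $[\iota_*\scO_Y] = \prod_{k=1}^r(1 - [\scO_\bP(-d_k)])$ in $K(\bP)$. Writing $\xi = [\scO_\bP(1)]$, and combining $\scFtilde_1 = \scO_\bP(-1)[N]$ with Serre duality on $\bP$ (for which $\omega_\bP \cong \scO_\bP(-Q)$) and the relation $\xi^Q = \prod_k \xi^{d_k}$, one reduces to
$$
 \chi_Y(\scFbar_i,\scFbar_1) = \chi_\bP\lb \xi^{-1}\prod_{k=1}^r(\xi^{d_k}-1), \ \scFtilde_i \rb = \sum_{j=0}^Q A_j \, \chi_\bP(\xi^{Q-1-j}, \scFtilde_i),
$$
where the last equality uses \eqref{eq:a} with $A_0 = 1$. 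For $0 \le j \le Q-1$ one has $\xi^{Q-1-j} = [\scEtilde_{Q-j}]$, and the duality \eqref{eq:Euler_form} gives $\chi_\bP(\xi^{Q-1-j}, \scFtilde_i) = \delta_{j,\,i-1}$; for $j = Q$ the class $\xi^{-1} = [\scO_\bP(-1)]$ must first be expanded inside the exceptional collection using the Koszul relation $\sum_{m=0}^Q B_m \xi^m = 0$ coming from \eqref{eq:KoszulP}, which yields $\chi_\bP(\xi^{-1}, \scFtilde_i) = -B_{Q-i+1}$. Summing the contributions and using $A_Q = (-1)^r$ gives $\chi_Y(\scFbar_i,\scFbar_1) = A_{i-1} - (-1)^r B_{Q-i+1}$, and the self-reciprocity $A_{i-1} = (-1)^r A_{Q-i+1}$ of $\prod_k(T^{d_k}-1)$ (all of whose roots lie on the unit circle) puts this into the desired form.

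I expect the main obstacle to be the sign bookkeeping in this last step: one must carefully track the shift $[N]$ in $\scFtilde_1$, the sign $(-1)^N$ from Serre duality on $\bP$, and the sign from the Koszul relation, and one must fix the convention for $T_{\scFbar_1}^\vee$ so that it is $\chi_Y(\scFbar_i,\scFbar_1)$ --- and not $\chi_Y(\scFbar_1,\scFbar_i)$ --- that enters its action on $K(Y)$; the two differ by the factor $(-1)^n$ coming from Serre duality on $Y$, and they give genuinely different reflections when $n$ is odd and the Euler pairing on $K(Y)$ is skew-symmetric. Once the conventions are settled, the argument is a purely formal manipulation in $K(\bP)$ using \eqref{eq:Euler_form}, \eqref{eq:KoszulY}, \eqref{eq:KoszulP}, and the palindromicity of the characteristic polynomials in \eqref{eq:a} and \eqref{eq:b}. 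Combined with Lemma \ref{lm:autoeq_isometry} and Lemma \ref{lm:hinfty}, this yields the $\Hqd$-invariance of $\Xbar$ asserted in Proposition \ref{prop:Xbar-ihg}, since $\Hqd$ is generated by $h_0$ and $h_1$.
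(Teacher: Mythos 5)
Your argument is correct and follows essentially the same route as the paper: both reduce the lemma, via the reflection formula for the dual twist on the Grothendieck group, to the identity $\chi(\scFbar_i,\scFbar_1)=(-1)^r(A_{Q-i+1}-B_{Q-i+1})$, and both establish it by combining the Koszul resolution of $\iota_*\scO_Y$ with the duality $\chi(\scEtilde_{Q-i+1},\scFtilde_j)=\delta_{ij}$ and the evaluation $\chi(\scO_\bP(-1),\scFtilde_i)=-B_{Q-i+1}$. The only (immaterial) difference is bookkeeping: the paper flips to $\chi(\scFbar_1,\scFbar_i)$ by Serre duality on $Y$ and matches the resulting sum of Kronecker deltas directly against \eqref{eq:a}, whereas you apply Serre duality on $\bP$ and invoke the palindromic symmetry $A_{i-1}=(-1)^r A_{Q-i+1}$ at the end.
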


\begin{proof}
Recall that for a spherical object $\scE$
and an object $\scF$,
the dual spherical twist $T_\scE^{\vee} \scF$
of $\scF$ along $\scE$ is defined as
the mapping cone
$$
 T_\scE^{\vee} \scF
  = \{ \scF \to \hom(\scF, \scE)^{\vee} \otimes \scF \}
$$
of the dual evaluation map.
Since the induced action
on the Grothendieck group
is given by the reflection
\begin{align*}
 [T_{\scE}^{\vee}(\scF)]
   &= [\scF] - \chi(\scF, \scE) [\scE],
\end{align*}
it suffices to show that
\begin{equation} \label{eq:hX}
 (h_1)_{ij} = \delta_{ij} - \Xbar_{i1} \delta_{j1}.
\end{equation}
Recall from \eqref{eq:h1} that
$$
 (h_1)_{ij} =
\begin{cases}
 (-1)^r B_Q & i = j = 1, \\
 \delta_{ij} & j \ne 1, \\
 (-1)^r(B_{Q-i+1} - A_{Q-i+1}) & i \ne 1 \text{ and } j = 1.
\end{cases}
$$
Equation \eqref{eq:hX} for $j \ne 1$ is obvious,
and that for $i = j = 1$ follows from
$$
 (-1)^r B_Q = (-1)^{r + N + 1}
  = (-1)^{N-r+1}
  = (-1)^{n+1}
$$
and
$$
 \Xbar_{11} =
\begin{cases}
 0 & n \text{ is odd}, \\
 2 & n \text{ is even}.
\end{cases}
$$
To prove \eqref{eq:hX} for $i \ne 1$ and $j = 1$,
one can use
\begin{align*}
 \chi(\scFtilde_i(1))
  = \sum_{j=1}^Q \chi((h_0^{-1})_{ij} \scFtilde_j)
  = \sum_{j=1}^Q (h_0^{-1})_{ij} \chi(\scEtilde_1, \scFtilde_j)
  = (h_{\infty}^{-1})_{iQ}
  = - B_{Q-i+1}.
\end{align*}
and
$$
 \chi(\scFtilde_i(j))
  = \chi(\scO(-j), \scFtilde_i)
  = \chi(\scEtilde_{-j+1}, \scFtilde_i)
  = \delta_{Q+j,i}
$$
for $-Q+1 \le j \le 0$
to show
\begin{align*}
 (-1)^r \Xbar_{i 1}
  &= (-1)^r \chi(\scFbar_i, \scFbar_1) \\
  &= (-1)^{N} \chi(\scFbar_1, \scFbar_i) \\
  &= (-1)^{N} \chi(\scO_Y(-1)[N], \scFbar_i) \\
  &= \chi(\scO_Y(-1), \scFbar_i) \\
  &= \chi(\scFbar_i(1)) \\
  &= \chi(\scFtilde_i(1))
      - \sum_{k=1}^r \chi(\scFtilde_i(1 - d_k))
       + \sum_{1 \le k < l \le r} \chi(\scFtilde_i(1 - d_k - d_l)) \\
  & \qquad - \dots +(-1)^r \chi(\scFtilde_i(1 - d_1 - \dots - d_r)) \\
  &= - B_{Q-i+1}
      - \sum_{k=1}^r \delta_{Q-d_k+1, i}
      + \sum_{1 \le k < l \le r} \delta_{Q - d_k - d_l + 1, i}
      - \cdots + (-1)^r \delta_{Q - d_1 - \dots - d_r + 1, i} \\
  &= - B_{Q-i+1}
      - \sum_{k=1}^r \delta_{Q-i+1, d_k}
      + \sum_{1 \le k < l \le r} \delta_{Q - i + 1, d_k + d_l}
      - \cdots + (-1)^r \delta_{Q - i + 1, d_1 + \dots + d_r} \\
  &= - B_{Q-i+1} + A_{Q-i+1},
\end{align*}
where we have used \eqref{eq:a}
in the last equality.
\end{proof}

\section{Mirror manifolds and Stokes matrices}
 \label{sc:stokes}

In this section,
we discuss the relation between the Gram matrix
in Theorem \ref{th:invariant}
in the case when $Y$ is a hypersurface
and the Stokes matrix for the quantum cohomology
of the weighted projective space.
By \cite[Corollary 1.8]{Coates-Corti-Lee-Tseng},
the quantum differential equation
for the small $J$-function of $\bP$
is given by
\begin{equation} \label{eq:qde}
 \prod_{i=0}^n \prod_{k=0}^{q_i-1}
  \lb q_i z \frac{\partial}{\partial t_1} - k z \rb
   J_\bP = e^{t_1} J_\bP,
\end{equation}
where $t_1$ is the flat coordinate
associated with the positive generator of
$H^2(\bP; \bZ) \subset \Horb^*(\bP; \bC)$
and $z$ is the quantization parameter.
It follows that the stationary-phase integrals
\begin{align} \label{eq:spint}
 J_i(t_1; z)
  = \int_{\Gamma_i} e^{f / z} \Omega
\end{align}
span the identity component
of the space of flat sections
of the first structure connection,
where
$f$ is the function
$
 f(x) = \sum_{i=0}^N q_i x_i
$
on
$$
 \bT = \{ (x_0, \ldots, x_N) \in (\bCx)^{N+1}
  \mid x_0^{q_0} \cdots x_N^{q_N} = e^{t_1} \},
$$
$\Omega$ is the holomorphic volume form
$
 \Omega = d x_0 \wedge \cdots \wedge d x_N
  / d (x_0^{q_0} \cdots x_N^{q_N})
$
on $\bT$, and
$\{ \Gamma_i \}_{i=1}^Q$ is a basis
of flat sections of the local system
whose fiber is the relative homology group
$
 H_N(\bT, \ \Re(f/z) \ll 0; \bZ).
$

The function $f$ has $Q$ critical points
$$
 p_i
  = e^{(t_1 - 2 i \pi \sqrt{-1}) / Q} \cdot
     (1, \ldots, 1),
  \qquad
   i = 1, \ldots, Q
$$
with critical values
$$
 f(p_i) = Q e^{(t_1 - 2 i \pi \sqrt{-1}) / Q},
$$
where the minus sign comes from the clockwise order
on the distinguished set $(c_i)_{i=1}^Q$ of vanishing paths,
which we choose as straight line segments
from the origin to the critical values
as in Figure \ref{fg:vp}.
See e.g.~\cite{Arnold--Gusein-Zade--Varchenko}
for vanishing cycles and the Picard-Lefschetz formula.
Let  $( \gamma_i )_{i=1}^Q$ be the corresponding distinguished basis
of vanishing cycles in $H_{N-1}(f^{-1}(0); \bZ)$.
We choose Lefschetz thimbles
$( \Gamma_i )_{i=1}^Q$
as in Figure \ref{fg:thimbles},
which gives a basis of the relative homology group
$
 H_N(\bT, \ \Re(f/z) \ll 0; \bZ)
$
for $\arg(z) > 0$.
They conjecturally correspond to the full exceptional collection
$(\scFtilde_i)_{i=1}^Q$
in the derived category of coherent sheaves on $\bP$
under homological mirror symmetry
\cite{Kontsevich_ENS98},
and Theorem \ref{th:stokes} below gives an evidence
for this conjecture.
The thimbles $(\Gamma_i')_{i=1}^Q$
shown in dotted lines are the dual Lefschetz thimbles,
which is a basis of
$
 H_N(\bT, \ \Re(f/z) \ll 0; \bZ)
$
for $\arg(z) < 0$ and
should correspond to the dual exceptional collection
$(\scEtilde_i)_{i=1}^Q$
under homological mirror symmetry.
\begin{figure}[htbp]
\begin{minipage}{.5 \linewidth}
\centering
\input{vp.pst}
\caption{Vanishing paths}
\label{fg:vp}
\end{minipage}
\begin{minipage}{.5 \linewidth}
\centering
\input{thimbles.pst}
\caption{Lefschetz thimbles}
\label{fg:thimbles}
\end{minipage}
\end{figure}
The stationary-phase integral \eqref{eq:spint}
is the Laplace transform
\begin{equation} \label{eq:Laplace_transf}
 J_i(t_1; z) = \int_{\ell_i} e^{s / z} {\Itilde_i}(t_1; s) d s
\end{equation}
of the period integral
$$
 \Itilde_i (t_1; s) = \int_{\gamma_i \subset f^{-1}(s)} \Omega / d f,
$$
where $\ell_i$ is a path on the $s$-plane
starting from a critical value
underlying the Lefschetz thimble $\Gamma_i$
and $\Omega / d f$ is the Gelfand-Leray form
on $f^{-1}(s)$.

The Stokes matrix
$(S_{ij})_{i,j=1}^Q$
is a part of the monodromy data
for the stationary-phase integrals in \eqref{eq:spint},
which is related to intersection numbers
of vanishing cycles as follows
(cf. e.g.
\cite[Section 4.1]{Dubrovin_GATFM} or
\cite[Section 5]{Ueda_SMQCCS}):
Let $(\Gamma_j^+)_{j=1}^Q$ be a basis of
$
 H_N(\bT, \ \Re(f/z) \ll 0; \bZ)
$
for $\arg(z) > 0$,
which is obtained from
the basis
$(\Gamma_{Q+1-i}')_{i=1}^Q$
of
$
 H_N(\bT, \ \Re(f/z) \ll 0; \bZ)
$
for $\arg(z) < 0$ 
by parallel transport
along a path in the upper half plane
$\{ z \in \bCx \mid \Im z \ge 0 \}$
with respect to the Gauss-Manin connection
on the relative homology bundle.
Then the Stokes matrix is given by
$$
 \Gamma^+_j = \sum_{i=1}^Q S_{ij} \Gamma_i.
$$
On the other hand,
Picard-Lefschetz formula
(see e.g. \cite{Pham_LDDC,
Ebeling_MGISCI,
Arnold--Gusein-Zade--Varchenko})
gives
$$
 \Gamma_j^+ - \Gamma_j
  =\sum_{i< j} (\gamma_i, \gamma_j) \Gamma_i,
$$
where
$
 (\gamma_i, \gamma_j) = (-1)^{N(N+1)/2} (\gamma_i \circ \gamma_j)
$
is $(-1)^{N(N+1)/2}$ times
the intersection number of vanishing cycles
$\gamma_i$ and $\gamma_j$.
This shows that
$$
 S_{ij} =
\begin{cases}
 (\gamma_i, \gamma_j) & i < j, \\
 \delta_{ij} & \text{otherwise}.
\end{cases}
$$

Simultaneous multiplication
$x_i \mapsto \alpha x_i$
by a constant $\alpha \in \bCx$ induces an isomorphism
from $f^{-1}(s)$ at $e^{t_1} = a$
to $f^{-1}( \alpha s)$ at $e^{t_1} = \alpha^Q a$,
so that the period integral $\Itilde_i(s; t_1)$ depends
only on the ratio of $s^Q$ and $e^{t_1}$;
$$
 \Itilde_i(s; t_1)
  = I_i(t), \qquad
 t = \lambda \, Q^Q \, e^{t_1} / s^Q, \qquad
 \lambda = \left. \prod_{\nu=0}^N q_\nu^{q_\nu}
      \right/ Q^{Q}.
$$
Here, the factor $Q^Q$ is chosen
so that the critical values
$s = f(p_i)$ go to $t = \lambda$.
For any fixed value of $t_1$,
the function $\Itilde_i(s; t_1)$
is holomorphic at $s = 0$
and has singularities at $s = \infty$
and the critical values $s = f(p_i)$.
On the other hand,
the function $I_i(t)$ satisfies
the irreducible hypergeometric differential equation
$
 \scH^{\reduced} I_i(t) = 0
$
and has singularities at $t = 0$, $\lambda$ and $\infty$.
The singularities of $I_i(t)$ at $t = 0$ and $\lambda$
come from those of $\Itilde_i(s; t_1)$,
whereas the singularity at $t = \infty$
comes from the $Q$-fold Kummer covering
$t \sim 1/s^Q$.

The irreducible local system $\scL^{\reduced}$
on $\bP^1 \setminus \{ 0, 1, \infty \}$
associated with $\scH^{\reduced} I = 0$
is described
by Golyshev \cite{Golyshev_RRV}
as follows:
Let $Y$ be the anticanonical hypersurface in $\bP$
and $\iota : Y \hookrightarrow \bP$ be the inclusion.
Let further $K(\bP)$ be the Grothendieck group of $\bP$ and
$K$ be the subgroup of $K(\bP)$
generated by $\{ [\iota_* \scO_Y(i)] \}_{i \in \bZ}$.
Set
\begin{align*}
 \varphi_0(x) &= \prod_{\nu=0}^N(x^{q_\nu}-1), \quad
 \varphi_\infty(x) = x^Q-1, \quad
 \eta(x) = \gcd \lb \varphi_0(x), \varphi_\infty(x) \rb,
\end{align*}
and
\begin{align*}
 \varphibar_\infty(x)
  = \frac{\varphi_\infty(x)}{\eta(x)}, \qquad
 \varphibar_0(x)
  = \frac{\varphi_0(x)}{\eta(x)},
\end{align*}
so that $Q = \deg(\varphi_0(x)) = \deg(\varphi_\infty(x))$ and $\Qred = Q - \deg(\eta(x))$.
Since $K(\bP)$ is generated by $\{ \scO_\bP(i) \}_{i \in \bZ}$
with relations
$$
 \sum_{k=0}^{N+1}
  (-1)^k
   \sum_{0 \le j_1 < \cdots < j_k \le N}
    \left[
     \scO_{\bP} \lb i - \sum_{\ell=1}^k q_{j_\ell} \rb
    \right]
  = 0
$$
coming from the exact sequence \eqref{eq:KoszulP},
one has a ring isomorphism
\begin{align} \label{eq:KPisom}
 K(\bP)
  \cong
 \bZ[x, x^{-1}] \left/ \varphi_0(x) \right. 
\end{align}
sending $x^i$ to $[\scO_\bP(i)]$.
It follows from \eqref{eq:KoszulY} that
\begin{align} \label{eq:OYi}
 [\iota_* \scO_Y(i)]
  = [\scO_\bP(i)] - [\scO_\bP(i-Q)],
\end{align}
which goes to $x^i(1-x^{-Q})$
under the isomorphism \eqref{eq:KPisom}.
This shows that the group $K$ is isomorphic to the subgroup
of $\bZ[x, x^{-1}] \left/ \varphi_0(x) \right.$
generated by
$
 x^i (1 - x^Q)
$
for $i \in \bZ$, and one has a short exact sequence
$$
\begin{CD}
 0 @>>> K @>>> K(\bP) @>>> K(\bP)/K @>>> 0,
\end{CD}
$$
where
\begin{align*}
 K
  \cong \varphi_\infty(x) \bZ[x, x^{-1}] / (\varphi_0(x))
  \cong \bZ[x, x^{-1}] / (\varphibar_0(x))
\end{align*}
and
\begin{align*}
 K(\bP) / K
  \cong \bZ[x, x^{-1}]/(\varphi_0(x), \varphi_\infty(x))
  = \bZ[x, x^{-1}]/(\eta(x)).
\end{align*}
Let $\Phi_0$ be an autoequivalence of $D^b \coh \bP$
defined by $\Phi_0(-) = \scO_\bP(-1) \otimes (-)$.
The induced map
$\phi_0 : K(\bP) \to K(\bP)$
is given by multiplication by $x^{-1}$,
which clearly has $\varphi_0(x)$
as the characteristic polynomial.
The autoequivalence $\Phi_0$ induces
an autoequivalence $\Phibar_0$ of $D^b \coh Y$
given by $\Phibar_0(-) = \scO_Y(-1) \otimes (-)$.
The induced map $\phibar_0 : K \to K$
fits into the commutative diagram
\begin{align*} \label{eq:diagram}
\begin{psmatrix}
 K & K(\bP) \\
 K & K(\bP)
\end{psmatrix}
\psset{shortput=nab}
\ncline[arrows=H->,hooklength=2mm,hookwidth=-2mm,offset=-1mm,nodesep=4mm]{1,1}{1,2}
\ncline[arrows=H->,hooklength=2mm,hookwidth=-2mm,offset=-1mm,nodesep=4mm]{2,1}{2,2}
\ncline[arrows=->,nodesep=5mm]{1,1}{2,1}_{\phibar_0}
\ncline[arrows=->,nodesep=4mm]{1,2}{2,2}^{\phi_0}
\end{align*}
since
\begin{align*}
 \iota_*(\scO_Y(-1) \otimes (-))
  &\cong \iota_*(\iota^* \scO_\bP(-1) \otimes (-)) \\
  &\cong \scO_\bP(-1) \otimes \iota_*(-),
\end{align*}
and the characteristic polynomial of $\phibar_0$ is given by
$\varphibar_0(x)$.

Let $\Phibar_1 = T_{\scFbar_1}^\vee$ be the autoequivalence of $D^b \coh Y$
appearing in Lemma \ref{lm:hone}.
The induced map will be denoted by $\phibar_1 : K \to K$.
Define an autoequivalence $\Phibar_\infty$ of $D^b \coh Y$ by
$$
 \Phibar_\infty = \lb \Phibar_0 \circ \Phibar_1 \rb^{-1}.
$$
The induced map $\phibar_\infty : K \to K$ is given by
$$
 \phibar_\infty = (\phibar_0 \circ \phibar_1)^{-1},
$$
which is presented by the matrix
$h_\infty = (h_1 \circ h_0)^{-1}$
by Lemmas \ref{lm:hinfty} and \ref{lm:hone}.
It follows that $\phibar_\infty$ acts on $\{ [\scFbar_i] \}_{i=1}^{Q}$
by cyclic permutation
$$
 [\scFbar_i] \mapsto \sum_{j=1}^Q (h_\infty)_{ij} [\scFbar_j]
  = [\scFbar_{i-1}].
$$
Since $\{ [\scF_i] \}_{i=1}^Q$ is a free $\bZ$-basis of $K(\bP)$,
one can define a linear automorphism
$\phi_\infty : K(\bP) \to K(\bP)$ by
\begin{equation} \label{eq:phiinfty2}
\phi_\infty([\scF_i]) = [\scF_{i-1}].
\end{equation}
We have a commutative diagram
\begin{equation} \label{eq:diagram2}
\begin{psmatrix}
 K & K(\bP) & K(\bP) / K \\
 K & K(\bP) & K(\bP) / K,
\end{psmatrix}
\psset{shortput=nab}
\ncline[arrows=H->,hooklength=2mm,hookwidth=-2mm,offset=-1mm,nodesep=4mm]{1,1}{1,2}
\ncline[arrows=H->,hooklength=2mm,hookwidth=-2mm,offset=-1mm,nodesep=4mm]{2,1}{2,2}
\ncline[arrows=->>,nodesep=5mm]{1,2}{1,3}
\ncline[arrows=->>,nodesep=5mm]{2,2}{2,3}
\ncline[arrows=->,nodesep=5mm]{1,1}{2,1}_{\phibar_\infty}
\ncline[arrows=->,nodesep=4mm]{1,2}{2,2}^{\phi_\infty}
\ncline[arrows=->,nodesep=4mm,linestyle=dotted]{1,3}{2,3}
\end{equation}
which induces the dotted vertical arrow on the right.
By taking the transpose of the diagram \eqref{eq:diagram2},
one obtains the commutative diagram
\begin{align*} 
\begin{psmatrix}
 K^\vee & K(\bP)^\vee & (K(\bP) / K)^\vee \\
 K^\vee & K(\bP)^\vee & (K(\bP) / K)^\vee.
\end{psmatrix}
\psset{shortput=nab}
\ncline[arrows=H->,hooklength=2mm,hookwidth=2mm,offset=1mm,nodesep=4mm]{1,3}{1,2}
\ncline[arrows=H->,hooklength=2mm,hookwidth=2mm,offset=1mm,nodesep=4mm]{2,3}{2,2}
\ncline[arrows=->>,nodesep=5mm]{1,2}{1,1}
\ncline[arrows=->>,nodesep=5mm]{2,2}{2,1}
\ncline[arrows=->,nodesep=5mm]{1,1}{2,1}_{\phibar_\infty^T}
\ncline[arrows=->,nodesep=4mm]{1,2}{2,2}^{\phi_\infty^T}
\ncline[arrows=->,nodesep=4mm]{1,3}{2,3}^{\left. \phi_\infty^T \right|_{(K(\bP) / K)^\vee}}
\end{align*}
Since the Euler form 
is non-degenerate by \eqref{eq:Euler_form},
it gives an identification
$K \cong K(\bP)^\vee := \Hom(K(\bP), \bZ)$
with the dual group.
The transpose $\phi_\infty^T$ of \eqref{eq:phiinfty2}
is given by the cyclic permutation
$\phi_\infty^T([\scE_i]) = [\scE_{i+1}]$ of $\{ [\scE_i] \}_{i=1}^Q$,
which goes to the cyclic permutation of $\{ x^i \}_{i=0}^{Q-1}$
under the isomorphism
$
 K(\bP) \cong \bZ[x,x^{-1}]/(\varphi_0(x)).
$
Note that the Euler form satisfies
$$
 \chi(x^j, x^i \cdot x^k)
 = \chi(\scO_\bP(j),\scO_\bP(i+k) )
 = \chi(\scO_\bP(i-j+k))
 = \chi(\scO_\bP(-i+j),\scO_\bP(k) )
  = \chi(x^{-i} \cdot x^j, x^k),
$$
so that the transpose of multiplication by $x^i$
is given by multiplication by $x^{-i}$.
One has
\begin{align*}
 (K(\bP)/K)^\vee
  &\cong K^\bot \subset K(\bP)
\end{align*}
with respect the the Euler form,
so that
\begin{align*}
 (K(\bP)/K)^\vee
  &\cong \lc \alpha \in K(\bP) \mid \chi(\alpha, \beta) = 0 \text{ for any } \beta \in K \rc \\
  &= \lc \alpha \in K(\bP) \mid \chi(\alpha, \varphi_\infty(x) \gamma) = 0
   \text{ for any } \gamma \in K(\bP) \rc \\
  &= \lc \alpha \in K(\bP) \mid \chi(\varphi_\infty(x^{-1}) \alpha, \gamma) = 0
   \text{ for any } \gamma \in K(\bP) \rc \\
  &= \Ker \lb \varphi_\infty(x^{-1}) : K(\bP) \to K(\bP) \rb \\
  &= \Ker \lb \varphi_\infty(x) : K(\bP) \to K(\bP) \rb.
\end{align*}
By \eqref{eq:KPisom}
one has
\begin{align*}
 \Ker \lb \varphi_\infty(x) : K(\bP) \to K(\bP) \rb
  &\cong \bZ[x,x^{-1}]/(\eta(x)).
\end{align*}
Since $\phi_\infty^T$ acts on a basis as a cyclic permutation
$\phi_\infty^T([\scE_i]) = [\scE_{i+1}]$ that is the transpose of 
\eqref{eq:phiinfty2},
the characteristic polynomial of $\phi_\infty^T$ is given by $\varphi_\infty(x) = x^Q-1$.
The characteristic polynomial of the induced action
on $(K(\bP) / K)^\vee \cong \bZ[x, x^{-1}] / (\eta(x))$ is given by $\eta(x)$,
so that the characteristic polynomial of
$
 \phibar_\infty^T : K^\vee \to K^\vee
$
is given by
$
 \varphibar_\infty(x) = \varphi_\infty(x) / \eta(x).
$
Since transposition does not change the eigenvalues of a linear map,
the characteristic polynomial of $\phibar_\infty$
is also given by $\varphibar_\infty(x)$.

Let $\scK$ be the local system
associated with $K \otimes_\bZ \bC$
such that the monodromy at the origin acts by
$
 \varphibar_0 : [\iota_* \scO_Y(i)] \mapsto [\iota_* \scO_Y(i-1)]
$
and the monodromy at $\lambda$ acts by
$
 \varphibar_1 : [\iota_* \scO_Y(i)] \mapsto [\iota_* (T^\vee_{\scFbar_1} \scO_Y(i))].
$
From the explicit description of $\phi_0$ and $\phi_\infty$
in terms of the isomorphism $K \cong \bZ[x, x^{-1}]/(\varphibar_0(x))$,
one can easily see that $\phibar_0$ and $\phibar_\infty$ have
no non-trivial simultaneous invariant subspace,
so that the local system $\scK$ is irreducible.

The discussion so far shows that
the irreducible local system $\scK$ satisfies the conditions that
\begin{itemize}
 \item
the characteristic polynomial of the monodromy
at $0$ is given by $\varphibar_0(x)$,
 \item
the characteristic polynomial of the monodromy
at $\infty$ is given by $\varphibar_\infty(x)$, and
 \item
the monodromy at $\lambda$ is a pseudo-reflection
\end{itemize}
According to \cite[Theorem 3.5]{Beukers-Heckman},
an irreducible local system
on $\bP^1 \setminus \{ 0, \lambda, \infty \}$
satisfying the above conditions
is unique up to isomorphism.
One can see from \cite[Section 2]{Beukers-Heckman}
that the irreducible local system $\scL^\reduced$
defined in Section \ref{sc:introduction}
satisfies these conditions,
so that one has an isomorphism of the local systems
$\scL^{\reduced}$ and $\scK$.
The local section of $\scL^{\reduced}$
coming from the integration along the vanishing cycle $\gamma_1$
is characterized as the unique section
which is holomorphic at $\lambda$,
and as such corresponds to the section $[\scFbar_1]$
in the local system $K \otimes_\bZ \bC$.
Since the monodromy at infinity $\phibar_\infty$ acts
by cyclic permutation
$$
 [\scFbar_i] \mapsto \sum_{j=1}^Q (h_\infty)_{ij} [\scFbar_j]
  = [\scFbar_{i-1}],
$$
vanishing cycles $(\gamma_i)_{i=1}^Q$
correspond to $([\scFbar_i])_{i=1}^Q$
under this isomorphism.
Since the intersection form on vanishing cycles
is monodromy-invariant,
the Gram matrix $(\gamma_i, \gamma_j)_{ij}$
is an invariant of $\Hqd$
so that it must be proportional
to $( \chi( \scFbar_i, \scFbar_j ) )_{i, j}$
by Theorem \ref{th:invariant}.
The multiplicative constant can be fixed
from the fact that the monodromy of $\scL^{\reduced}$ at $\lambda$
is the pseudo-reflection by $\gamma_1$ and
the monodromy of $\scK$ at $\lambda$ is 
the pseudo-reflection by $[\scFbar_1]$.
If $n$ is even,
then the multiplicative constant can also be fixed by
noting that $(\gamma_1, \gamma_1) = 2 = \chi(\scFbar_1, \scFbar_1)$.
It follows that the Stokes matrix is given by
$$
 S_{ij}
  = (\gamma_i, \gamma_j)
  = \chi(\scFbar_i, \scFbar_j)
  = \chi(\scFtilde_i, \scFtilde_j)
     + (-1)^n \chi(\scFtilde_j, \scFtilde_i)
  = \chi(\scFtilde_i, \scFtilde_j)
$$
for $i < j$,
so that we have the following:

\begin{theorem} \label{th:stokes}
The Stokes matrix $(S_{ij})_{i, j=1}^Q$ for the quantum cohomology
of the weighted projective space
is given by the Gram matrix
of the full exceptional collection $(\scFtilde_i)_{i=1}^Q$
with respect to the Euler form;
\begin{equation} \label{eq:stokes}
 S_{ij} =  \chi( \scFtilde_i, \scFtilde_j ).
\end{equation}
\end{theorem}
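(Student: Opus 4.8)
The plan is to assemble the mirror-theoretic identifications made in this section and then use the rigidity coming from Theorem~\ref{th:invariant}. The Picard--Lefschetz computation above already reduces the claim to a computation of the intersection pairing of the distinguished basis of vanishing cycles $(\gamma_i)_{i=1}^Q$ of the Landau--Ginzburg potential $f$, since $S_{ij}=(\gamma_i,\gamma_j)$ for $i<j$ and $S_{ij}=\delta_{ij}$ otherwise; so it suffices to evaluate the Gram matrix $(\gamma_i,\gamma_j)_{i,j=1}^Q$.

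First I would locate these cycles inside a concrete local system. After the rescaling $t=\lambda\,Q^Q e^{t_1}/s^Q$, the period integrals $\Itilde_i(s;t_1)=I_i(t)$ solve the irreducible hypergeometric equation $\scH^{\reduced}I=0$, so $(\gamma_i)_{i=1}^Q$ is a basis of the irreducible local system $\scL^{\reduced}$ on $\bP^1\setminus\{0,\lambda,\infty\}$. On the other hand, Golyshev's construction attaches to $K\otimes_\bZ\bC$ the local system $\scK$, whose monodromies at $0$, $\lambda$ and $\infty$ are induced by $\scO_Y(-1)\otimes(-)$, the dual spherical twist $T^\vee_{\scFbar_1}$, and their inverse composite. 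As checked above, $\scK$ is irreducible, the characteristic polynomials of its monodromies at $0$ and $\infty$ are $\varphibar_0$ and $\varphibar_\infty$, and its monodromy at $\lambda$ is a pseudo-reflection; the same holds for $\scL^{\reduced}$. By \cite[Theorem~3.5]{Beukers-Heckman} an irreducible rank-$\Qred$ local system on $\bP^1\setminus\{0,\lambda,\infty\}$ with these data is unique, so $\scL^{\reduced}\cong\scK$.

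Next I would transport the distinguished basis across this isomorphism. The cycle $\gamma_1$ is the unique (up to scalar) section holomorphic at $\lambda$, and on the $K$-theory side this section is $[\scFbar_1]$; using that the monodromy at infinity acts by the cyclic shift $[\scFbar_i]\mapsto[\scFbar_{i-1}]$ (Lemmas~\ref{lm:hinfty} and~\ref{lm:hone}), one obtains $\gamma_i\leftrightarrow[\scFbar_i]$ for every $i$. The intersection form on vanishing cycles is monodromy-invariant, hence $(\gamma_i,\gamma_j)_{i,j}$ is an invariant of $\Hqd$ in the sense of Theorem~\ref{th:invariant}, and by that theorem it is a scalar multiple of $(\chi(\scFbar_i,\scFbar_j))_{i,j}$. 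I would fix the scalar to be $1$ by matching the two descriptions of the monodromy at $\lambda$ as the pseudo-reflection along the first basis vector (or, when $n$ is even, from $(\gamma_1,\gamma_1)=2=\chi(\scFbar_1,\scFbar_1)$). Finally I would rewrite $\chi(\scFbar_i,\scFbar_j)$ in terms of sheaves on $\bP$: the Koszul resolution \eqref{eq:KoszulY} together with Serre duality gives $\chi(\scFbar_i,\scFbar_j)=\chi(\scFtilde_i,\scFtilde_j)+(-1)^n\chi(\scFtilde_j,\scFtilde_i)$, and the second term vanishes for $i<j$ because $(\scFtilde_i)_{i=1}^Q$ is an exceptional collection. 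This yields \eqref{eq:stokes}.

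The step I expect to be the real obstacle is the bookkeeping inside the isomorphism $\scL^{\reduced}\cong\scK$: one must verify that the ordering of the distinguished basis $(\gamma_i)_{i=1}^Q$ induced by the clockwise arrangement of vanishing paths in Figure~\ref{fg:vp} matches the ordering of $([\scFbar_i])_{i=1}^Q$, and one must carefully track the overall sign and scaling through the two normalizations of the pseudo-reflection at $\lambda$. Because the thimble-versus-exceptional-object correspondence of homological mirror symmetry is itself only conjectural, the argument must be arranged so that this matching is forced by rigidity together with Theorem~\ref{th:invariant}, rather than used as an input.
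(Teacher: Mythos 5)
Your proposal follows essentially the same route as the paper: Picard--Lefschetz reduction of $S_{ij}$ to intersection numbers, identification of $\scL^{\reduced}$ with the $K$-theoretic local system $\scK$ via Beukers--Heckman rigidity, transport of the distinguished basis using the cyclic monodromy at infinity, proportionality from Theorem~\ref{th:invariant}, and normalization via the pseudo-reflection at $\lambda$. The only quibble is that $(\gamma_i)_{i=1}^Q$ is a spanning set rather than a basis of the rank-$\Qred$ system $\scL^{\reduced}$ (mirroring the linear dependence of the $[\scFbar_i]$ in $K$), but this does not affect the argument.
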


This generalizes
the case of the projective space
proved in \cite{Guzzetti,
Tanabe_IHGAQCPS}
and surely known to experts
(see e.g. \cite[Remark 4.13]{Iritani_ISQCMSTO}).
The relation between
Stokes matrices and exceptional collections
originates from \cite{Cecotti-Vafa_OCNST}
and was developed by Kontsevich \cite{Kontsevich_ENS98},
Zaslow \cite{Zaslow}
and Dubrovin \cite{Dubrovin_GATFM}.

The rank $Q$ of the quantum differential equation \eqref{eq:qde}
is the rank of the relative homology group
$
 H_N(\bT, \ \Re(f/z) \ll 0; \bZ),
$
whereas the rank $\Qred$ of the irreducible local system $\scL^\reduced$
is the rank of the homology group
$
 H_{N-1}(\overline{f^{-1}(s)}; \bZ)
$
of a smooth compactification $\overline{f^{-1}(s)}$
of the fiber $f^{-1}(s)$.
As a result,
sections of the reducible local system $\scL$
not coming from sections of $\scL^\reduced$
are not period integrals
on the mirror manifold $\overline{f^{-1}(s)}$.
Since the stationary-phase integrals $J_i$ are
Laplace transform of the period integrals $\Itilde_i$
as described in \eqref{eq:Laplace_transf},
one only needs the monodromy
of the irreducible local system
$\scL^\reduced$
for the proof of Theorem \ref{th:stokes}.

\bibliographystyle{amsalpha}
\bibliography{bibs}

\noindent
Susumu Tanab\'e

\noindent
Department of Mathematics,\\
Galatasaray University\\
\c{C}{\i}ra$\rm\breve{g}$an cad. 36,\\
Be\c{s}ikta\c{s}, Istanbul, 34357,
Turkey

{\em e-mail address}\ : \ tanabesusumu@hotmail.com, tanabe@gsu.edu.tr

\ \\

\noindent
Kazushi Ueda

\noindent
Department of Mathematics,\\
Graduate School of Science,\\
Osaka University,\\
Machikaneyama 1-1,
Toyonaka,
Osaka,
560-0043,
Japan.

{\em e-mail address}\ : \  kazushi@math.sci.osaka-u.ac.jp

\end{document}